\theoremstyle{plain}
\newtheorem{thm}{Theorem}[section]
\newtheorem*{thm*}{Theorem}
\newtheorem{lem}[thm]{Lemma}
\newtheorem*{lem*}{Lemma}
\newtheorem{dfn}[thm]{Definition}
\newtheorem{cor}[thm]{Corollary}
\newtheorem{ques}[thm]{Question}
\newcommand{\codeg}{\text{codeg}}
\newcommand{\BBE}{\mathbb{E}}
\newcommand{\BFP}{\mathbf{P}}
\date{}
\title{\vspace{-0.7cm}A transference principle for Ramsey numbers of bounded degree graphs}
\author{
Choongbum Lee \thanks{Department of Mathematics,
MIT, Cambridge, MA 02139-4307. Email: cb\_lee@math.mit.edu.
Research supported by NSF Grant DMS-1362326.}
}
\begin{document}

\maketitle

\begin{abstract}
We investigate Ramsey numbers of bounded degree graphs and
provide an interpolation between known results on the Ramsey numbers of 
general bounded degree graphs 
and bounded degree graphs of small bandwidth. Our main theorem implies that
there exists a constant $c$ such that for every $\Delta$, 
there exists $\beta$ such that if $G$ is a graph with maximum degree
at most $\Delta$ having a homomorphism $f$ into a graph $H$ of maximum degree at most $d$ where
$|f^{-1}(v)| \le \beta n$ for all $v \in V(H)$, then
the Ramsey number of $G$ is at most $c^{d \log d} n$. 
A construction of Graham, R\"odl, and Ruci\'nski shows that the statement above
holds only if $\beta \le (c')^{\Delta}$ for some constant $c' < 1$.
We further study the parameter $\beta$ using
a density-type embedding theorem for bipartite graphs of small bandwidth. 
This theorem may be of independent interest.
\end{abstract}

\section{Introduction} \label{sec:intro}

The {\em Ramsey number} of a graph $G$, denoted $r(G)$, is the minimum number $n$ such that
every edge-coloring of $K_n$ using two colors admits a monochromatic
copy of $G$. 
It was first studied in the seminal paper of Ramsey \cite{Ramsey} which established
that the Ramsey number of the complete graph $K_k$ on $k$ vertices is finite for all positive
integers $k$.
Since then, Ramsey theory, the study of various results that can be grouped under the common theme 
``every large system has a well-organized subsystem'', flourished and
became one of the most active fields of research in combinatorics. It is a beautiful
field with many questions still remaining to be answered and has deep connections 
to other fields such as logic, geometry, and computer science. See the classical book of
Graham, Rothschild, and Spencer \cite{GrRoSp} for a comprehensive overview of the field, or a survey
of Conlon, Fox, and Sudakov \cite{CoFoSu15} for recent developments in graph Ramsey theory.

In this paper we study the Ramsey number of bounded degree graphs.
The history of such study can be traced back to a paper of Burr and Erd\H{o}s \cite{BuEr} from 1975
which predicted that the behavior of Ramsey numbers of sparse graphs will be dramatically different 
from that of the complete graph (the Ramsey number of complete graphs is exponential in terms of the number of vertices \cite{ErSz, Erdos}).
A graph $G$ is {\em $d$-degenerate} if all its subgraphs has a vertex of degree at most $d$.
In their paper, Burr and Erd\H{o}s conjectured that
for all $d$, there exists a constant $c = c(d)$ such that $r(G) \le c(d) n$ for
all $n$-vertex $d$-degenerate graphs $G$. This conjecture is still open 
(see \cite{FoSu, Lee}).

In 1983, Chv\'atal, R\"odl, Szemer\'edi, and Trotter \cite{ChRoSzTr} showed that the
Burr-Erd\H{o}s conjecture holds if the degeneracy condition is replaced with a bounded degree condition.
More precisely, they showed that for all $\Delta$, there exists a constant $c = c(\Delta)$ such that
$r(G) \le c(\Delta) n$ for all $n$-vertex graphs $G$ of maximum degree at most $\Delta$.
Their proof relied on the regularity lemma and gave a tower-type dependency between $c(\Delta)$ and $\Delta$. 
Since then, the bound on $c(\Delta)$ has been improved by Eaton \cite{Eaton}, 
Graham, R\"odl, and Ruci\'nski \cite{GrRoRu00, GrRoRu01}, and by 
Conlon, Fox, and Sudakov \cite{CoFoSu} who showed that there exists a constant $c$
such that $r(G) \le c^{\Delta \log \Delta} n$
holds for $n$-vertex graphs $G$ of maximum degree at most $\Delta$.
For bipartite graphs, independently, 
Conlon \cite{Conlon}, and  Fox and Sudakov \cite{FoSu0} 
showed that a better bound
$r(G) \le c^{\Delta}n$ holds (for some other constant $c$).
On the other hand, Graham, R\"odl, and Ruci\'nski \cite{GrRoRu01} showed that
there exists a constant $c > 1$ such that for all large enough $n$, there exists a bipartite graph 
$G$ with maximum degree at most $\Delta$ satisfying $r(G) \ge c^{\Delta}n$.

In some cases the constant is known to be significantly smaller.
The {\em bandwidth} of an $n$-vertex graph $G$ is the minimum integer $b$ for which there exists a labelling 
of the vertices by $[n]$ such that $|i-j| \le b$ holds for all edges $\{i,j\} \in E(G)$.
Allen, Brightwell, and Skokan \cite{AlBrSk} showed that if $G$ is 
an $n$-vertex with maximum degree at most $\Delta$ and bandwidth at most $\beta n$, then
$r(G) \le (2\chi(G) + 4)n \le (2\Delta + 6)n$.
Despite the rather wide gap between the two constants $c^{\Delta}$ and $2\Delta+6$,
not much is known about the constant that dictates the Ramsey number of 
graphs of bounded maximum degree.
Since it is known that a graph has small bandwidth if and only if it has poor expansion property
(see \cite{BoPrTaWu} for more detail) and
the example of Graham, R\"odl, and Ruci\'nski is a good expander, one can reasonably guess that the
constant $c_G$ for which $r(G) \le c_G \cdot n$ depends on the expansion property of 
$G$. In this paper, we study the relation between the constant $c_G$ and the structure
of the graph $G$ in further depth.

\medskip

Throughout the paper, when considering 2-edge-coloring of a graph,
we tacitly assume that the two colors are red/blue, respectively,
and refer to the subgraph consisting of the red edges as the {\em red graph}, and
of the blue edges as the {\em blue graph}.
A {\em (vertex) weighted graph} is a pair $(G, w)$ of a graph $G$
and a weight function $w : V(G) \rightarrow [0,1]$. For a set $X \subseteq V(G)$, 
define $w(X) = \sum_{x \in X} w(x)$. For two graphs $G$ and $H$, a {\em homomorphism}
from $G$ to $H$ is a map $f : V(G) \rightarrow V(H)$ such that $\{f(v), f(w)\} \in E(H)$ 
whenever $\{v,w\} \in E(G)$.

\begin{dfn}
The {\em Ramsey number of a weighted graph $(G,w)$}, denoted $\hat{r}(G,w)$ is the minimum integer $n$ satisfying
the following: for every 2-edge-coloring of $K_n$, there exists a homomorphism $f$ 
from $G$ to the red graph, or to the blue graph, for which $w(f^{-1}(v)) \le 1$ holds for
all $v \in V(K_n)$. We simply denote $\hat{r}(G, w)$ as $\hat{r}(G)$ when the weight
function is clear from the context.
\end{dfn}

Note that the Ramsey number of weighted graphs generalizes the 
Ramsey number of graphs since given a graph we can always
consider a constant weight function assigning weight one to all vertices.
For this weight function, the Ramsey number in the traditional sense is equal
to the Ramsey number as defined above. 
This observation in particular implies that $\hat{r}(G,w)$ is 
finite for all weighted graphs $(G,w)$.
Further note that if $G$ is the complete graph, then for all weight functions $w$,
the Ramsey number of $(G,w)$ equals the Ramsey number of $G$ since a homomorphism 
from a complete graph to a graph with no loops is necessarily injective.
On the other hand suppose that $G$ is $k$-colorable and suppose that $w$ is a weight function
where $w(X) \le 1$ for each of the $k$ color classes $X$ of $G$.
Then one can easily check that $\hat{r}(G) \le r(K_k) \le {2k \choose k}$.
Therefore both the structure of $G$ and the weight function $w$ plays an important role
in determining the Ramsey number of weighted graphs.
However we will later see that for bounded degree graphs $G$, 
the Ramsey number of $(G,w)$ is mostly determined by the total weight 
$w(V(G))$ of the graph.

We consider another generalization of Ramsey numbers, 
implicitly studied in \cite{AlBrSk}, where the host graph is a graph
of large minimum degree instead of the complete graph.

\begin{dfn}
For a positive real $\varepsilon$ and a weighted graph $(G,w)$, define the
{\em $\varepsilon$-stable Ramsey number} $\hat{r}_\varepsilon(G,w)$ as the minimum 
integer $n$ satisfying the following:
for every graph $\Gamma$ on $n$ vertices of minimum degree at least $(1-\varepsilon)n$, 
for every 2-edge-coloring of $\Gamma$, there exists a homomorphism $f$ 
from $G$ to the red graph, or to the blue graph, for which $w(f^{-1}(v)) \le 1$ holds for
all $v \in V(\Gamma)$.
\end{dfn}

The stable Ramsey number generalizes Ramsey number since 
$\hat{r}(G,w) = \hat{r}_{\varepsilon}(G,w)$ holds
for every weighted graph $(G,w)$ if $\varepsilon < \frac{1}{\hat{r}(G,w) - 1}$.
However, given a weighted graph $(G,w)$, the $\varepsilon$-stable Ramsey number
does not necessarily exist. For example if $G$ is an $r$-partite graph, then 
$\hat{r}_{\varepsilon}(G,w)$ does not exist for $\varepsilon \ge \frac{1}{r-1}$ since we can take
the host graph $\Gamma$ to be a complete $(r-1)$-partite graph. In fact 
$\hat{r}_{\varepsilon}(G,w)$ is finite if and only if $\varepsilon < \frac{1}{r(K_{\chi(G)})-1}$
(see Section~\ref{sec:remarks}).
The following theorem extends a theorem of Conlon, Fox, and Sudakov
and shows that for bounded degree graphs, 
the stable Ramsey number is mostly determined by the total weight of the graph.

\begin{thm} \label{thm:weight_cfs}
There exist constants $c$ such that the following holds for every natural number $\Delta$ 
and positive real number $\varepsilon$ satisfying $\varepsilon < c^{-\Delta \log \Delta}$.
If $(G,w)$ is a weighted graph with maximum degree at most $\Delta$, then
$\hat{r}_\varepsilon(G) \le c^{\Delta \log \Delta} \cdot w(V(G))$. 
\end{thm}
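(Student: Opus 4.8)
The plan is to reduce the statement to the bounded‑degree embedding machinery of Conlon, Fox, and Sudakov~\cite{CoFoSu} and then to absorb the weights by a direct counting argument. The first ingredient is a structural lemma: running the argument of~\cite{CoFoSu} on the $2$-edge-colored graph $\Gamma$ (rather than on a complete graph) should produce a colour, say red, an integer $k=\Delta+1$, and pairwise disjoint sets $V_1,\dots,V_k\subseteq V(\Gamma)$, each of size at least $n/C_0$ with $C_0=c_0^{\Delta\log\Delta}$ for an absolute constant $c_0$, enjoying the following embedding property $(\star)$: for every $i\in[k]$ and every $S\subseteq\bigcup_{j\neq i}V_j$ with $|S|\le\Delta$, the common red-neighbourhood of $S$ inside $V_i$ has size at least $\rho|V_i|$, where $\rho\ge c_0^{-a\Delta\log\Delta}$ for an absolute constant $a$. (This is precisely the structure their method uses to recover $r(G)\le c^{\Delta\log\Delta}n$ in the unweighted case.) The only change required is to carry the at most $\varepsilon n$ non-neighbours of each vertex of $\Gamma$ as an error term through all of the density and common-neighbourhood estimates in~\cite{CoFoSu}; since $\varepsilon<c^{-\Delta\log\Delta}$ is far smaller than both $\rho$ and $1/C_0$, these errors are of lower order and do not affect the conclusion. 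I expect this transfer of the Conlon--Fox--Sudakov argument to the ``stable'' host graph $\Gamma$ to be the most technical point of the write‑up, though it is conceptually routine.

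Granting $(\star)$, I would embed $(G,w)$ greedily. Fix a proper $(\Delta+1)$-colouring of $G$, write $G_i$ for its $i$-th colour class, and insist that every vertex of $G_i$ be mapped into $V_i$ (possible since $\Delta(G)\le\Delta$). Call $v\in V(G)$ \emph{heavy} if $w(v)>1/2$ and \emph{light} otherwise; since the weights sum to $w(V(G))$ there are fewer than $2\,w(V(G))$ heavy vertices. Process the vertices of $G$ in any order that places all heavy vertices before all light ones, maintaining a partial homomorphism $f$ together with the loads $\ell(u)=w(f^{-1}(u))$ for $u\in V(\Gamma)$. When $v\in G_i$ is reached, its already‑embedded $G$-neighbours occupy at most $\Delta$ distinct vertices of $\bigcup_{j\neq i}V_j$, so by $(\star)$ their common red-neighbourhood $W\subseteq V_i$ satisfies $|W|\ge\rho|V_i|\ge\rho n/C_0$. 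We need some $u\in W$ with $\ell(u)\le 1-w(v)$. The vertices violating this have load exceeding $1-w(v)$; if $v$ is light then $1-w(v)\ge 1/2$, so (as all loads sum to at most $w(V(G))$) there are fewer than $2\,w(V(G))$ of them, while if $v$ is heavy then only heavy vertices have been embedded so far, so at most $2\,w(V(G))$ vertices carry positive load. In either case fewer than $2\,w(V(G))$ vertices of $W$ are forbidden. Note that we never need to avoid host vertices already used by other vertices of $G_i$, since those are pairwise nonadjacent in $G$ and may freely share an image subject only to the load bound.

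It remains to choose the constant. Put $n=c^{\Delta\log\Delta}\,w(V(G))$, so that $|W|\ge\rho n/C_0=\rho\,c^{\Delta\log\Delta}w(V(G))/C_0$; once $c^{\Delta\log\Delta}\ge 2C_0/\rho=2\,c_0^{(a+1)\Delta\log\Delta}$, which holds for instance with $c=2c_0^{\,a+1}$ when $\Delta\ge2$ (the case $\Delta=1$ being trivial), we obtain $|W|>2\,w(V(G))$, hence $W$ contains an admissible $u$; we then set $f(v)=u$ and increase $\ell(u)$ by $w(v)$. Since the chosen $u$ always lies in the red-neighbourhood of every already‑embedded $G$-neighbour of $v$, the final map $f$ sends $G$-edges to red edges and in particular never identifies two adjacent vertices; thus running the process to completion yields a homomorphism from $G$ into the red graph with $w(f^{-1}(u))\le 1$ for all $u$, which shows $\hat r_\varepsilon(G)\le n$. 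Finally one enlarges $c$ if necessary so that $\varepsilon<c^{-\Delta\log\Delta}$ also forces the error terms in the structural lemma to be negligible (any $\varepsilon$ below $c_0^{-(a+2)\Delta\log\Delta}$ suffices there) and takes $c$ to be the maximum of the finitely many resulting choices; this completes the proof.
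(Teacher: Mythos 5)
Your plan to absorb the weights by the heavy/light split with the $1/2$ threshold is exactly the idea the paper uses: in the paper's embedding lemma the vertices of each part are ordered by non-increasing weight, and the same threshold argument shows that the number of host vertices with load greater than $1/2$ is less than $2\,w(V(G))$. That part of your argument is correct and matches the paper.

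The genuine gap is in the structural claim $(\star)$. What Conlon, Fox, and Sudakov actually establish (and what the paper transfers to the stable setting via Lemma~\ref{lem:cfs_stable}) is that one colour class is $(\alpha,\beta,\rho,\delta,\Delta)$-dense: there is a sequence of disjoint sets $U_1,\dots,U_s$, where $s$ is \emph{not} fixed to be $\Delta+1$, with integers $d_1+\cdots+d_s=\Delta-s+1$, such that each $\Gamma[U_i]$ is bi-$(\rho^{2d_i},\delta)$-dense and, for $i<j$, each vertex of $U_i$ has at least $(1-\beta)|U_j|$ neighbours in $U_j$. This is strictly weaker than $(\star)$ in two important ways. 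First, bi-density inside a part only controls pairwise densities between large subsets, not $\Delta$-fold codegrees; the CFS/paper embedding handles same-part adjacencies by a one-vertex-at-a-time shrinkage argument, where each step reduces the candidate set by roughly a factor of $\delta$, and this uses the fact that the previous images were chosen carefully (not that \emph{every} $\Delta$-tuple has large codegree). Second, condition (ii) is one-directional ($i<j$), whereas $(\star)$ as you state it requires codegree control for arbitrary $\Delta$-subsets of $\bigcup_{j\neq i}V_j$ into $V_i$, in both directions. Because of this asymmetry the paper pairs the dense structure with a Lov\'asz-type partition of $G$ into $s$ parts (each $G[V_i]$ of maximum degree at most $d_i$) and embeds the parts in order, rather than using a proper $(\Delta+1)$-colouring of $G$. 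Your assertion that $(\star)$ ``is precisely the structure their method uses'' is therefore not accurate, and $(\star)$ with the claimed $c_0^{\Delta\log\Delta}$ bounds is not proved; if you want to pursue your route you would need to establish $(\star)$ from scratch, and it is not at all clear it holds at this quantitative strength. Otherwise, the embedding algorithm needs to be adapted to the actual $(\alpha,\beta,\rho,\delta,\Delta)$-dense structure, which is what the paper does.
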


The following result is the main theorem of this paper studying the Ramsey number of bounded
degree graphs.
It roughly asserts that if $G$ is a subgraph of a blow-up of $H$, then the Ramsey number of 
$G$ can be described in terms of the Ramsey number of $H$. 

\begin{thm} \label{thm:transference}
For all $\Delta, \xi$ and $\varepsilon$, 
there exists $\beta$ and $n_0$ such that the following holds for all $n \ge n_0$.
Let $G$ and $H$ be graphs where $G$ has $n$ vertices and maximum degree at most $\Delta$.
Suppose that there exists a homomorphism $f$ from $G$ to $H$ for which $|f^{-1}(v)| \le \beta n$
for all $v \in V(H)$.
Then for the weight-function of $H$ defined by $w(v) = \frac{1}{\beta n}|f^{-1}(v)|$ we have
$r(G) \le (1+\xi)\hat{r}_\varepsilon(H,w) \cdot \beta n$.
\end{thm}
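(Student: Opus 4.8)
The plan is to combine Szemerédi's regularity lemma with the defining property of the stable Ramsey number, first peeling off a degenerate range of parameters using the linear Ramsey bound for bounded degree graphs. Throughout write $m := \hat{r}_\varepsilon(H,w)$ and $N := (1+\xi)m\beta n$, and note that $w(V(H)) = \frac{1}{\beta n}\sum_{v}|f^{-1}(v)| = \frac1\beta$, so $N \ge (1+\xi)\beta n$. By the theorem of Conlon, Fox, and Sudakov there is an absolute constant $c_0$ with $r(G) \le c_0^{\Delta\log\Delta}n$; hence if $m \ge c_0^{\Delta\log\Delta}/((1+\xi)\beta)$ then $r(G) \le c_0^{\Delta\log\Delta}n \le (1+\xi)\beta m n = N$ and we are done. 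So we may assume $m < c_0^{\Delta\log\Delta}/((1+\xi)\beta)$, which bounds $m$ in terms of $\Delta,\xi,\varepsilon$ alone. This is the point that lets all regularity parameters below depend only on $\varepsilon$ and $m$ (hence on $\Delta,\xi,\varepsilon$), while $N$ still grows with $n$; taking $n_0$ large then makes the regularity lemma applicable to $K_N$.

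Fix an arbitrary $2$-edge-colouring of $K_N$ and apply the regularity lemma with a regularity parameter $\delta$ small relative to $1/m$, $\varepsilon$, and $1/\Delta$, obtaining an equitable partition of $V(K_N)$ into parts $V_1,\dots,V_M$ with at most $\delta M^2$ irregular pairs, where $M$ is as large as we please (in particular $M \ge m$). Call a $\delta$-regular pair \emph{red} or \emph{blue} according to which colour has density at least $\tfrac12$ there; exactly one does, since the two densities sum to $1$. Group $\{1,\dots,M\}$ into $m$ clusters $S_1,\dots,S_m$ of nearly equal size and define a $2$-edge-coloured graph $\Gamma$ on $[m]$: join $a$ to $b$ when at least half of the pairs $(V_i,V_j)$ with $i\in S_a$, $j\in S_b$ are $\delta$-regular, colouring $ab$ with the majority colour among those regular pairs. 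Since irregular pairs are scarce, the number of non-edges of $\Gamma$ is $O(\delta m^2)$, so $\Gamma$ has minimum degree at least $(1-\varepsilon)m$ once $\delta$ is small enough in terms of $m$ and $\varepsilon$.

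Now invoke the defining property of $\hat{r}_\varepsilon(H,w) = m$ for the $2$-edge-coloured graph $\Gamma$ (which has $m$ vertices and minimum degree at least $(1-\varepsilon)m$): there is a homomorphism $g$ from $H$ into, say, the red graph of $\Gamma$ with $w(g^{-1}(a)) \le 1$ for all $a$. Put $\phi := g\circ f$; then the classes $G_a := \phi^{-1}(a)$ partition $V(G)$, span no edge of $G$, have size $|G_a| = \beta n\, w(g^{-1}(a)) \le \beta n$, and respect the red graph of $\Gamma$. It remains to embed $G$ injectively into the red graph of $K_N$ so that each $G_a$ lands inside the super-cluster $W_a := \bigcup_{i\in S_a}V_i$. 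There is room to spare, since $|W_a| = (1-o(1))N/m \ge (1+\tfrac\xi2)\beta n > |G_a|$; and for each red edge $ab$ of $\Gamma$ a positive proportion of the pairs between $S_a$ and $S_b$ are $\delta$-regular in red with density at least $\tfrac12$.

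The embedding is a (somewhat technical) application of the blow-up lemma of Komlós, Sárközy, and Szemerédi: one slices the partition once more so that, inside each super-cluster, the boundedly many neighbourhood-types of the vertices of $G_a$ --- each such vertex having at most $\Delta$ neighbours, spread over at most $\Delta$ other super-clusters --- are served by dedicated super-regular red pairs between appropriate regularity parts, using the slicing property of regular pairs, a system-of-distinct-representatives argument on the fine reduced structure, the boundedness of $m$ and $\Delta$, and the constant slack in each $W_a$. (If $g$ mapped into the blue graph, embed into the blue graph of $K_N$ instead.) This produces a monochromatic copy of $G$, so $r(G) \le N = (1+\xi)\hat{r}_\varepsilon(H,w)\beta n$. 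The main obstacle is exactly this last step: the coarsening only delivers, between two super-clusters joined by a red edge of $\Gamma$, a \emph{positive density} of red-regular pairs rather than one super-regular pair spanning the whole product, while $\Gamma$'s red graph may have large maximum degree, so a single vertex of $G_a$ may need host-neighbours in many super-clusters simultaneously; converting this into a genuine super-regular blow-up that the blow-up lemma can absorb $G$ into is where the real work lies, and is feasible precisely because $m$ and $\Delta$ are bounded, the regularity partition can be taken arbitrarily fine, and each super-cluster carries a constant fraction of spare vertices.
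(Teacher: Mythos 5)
Your proposal diverges from the paper at the crucial structural step, and you correctly identify the resulting gap yourself but do not close it. You apply the regularity lemma to get a fine partition into $M \gg m$ parts, coarsen it into $m$ super-clusters, and define a majority-coloured graph $\Gamma$ on $[m]$; this leaves you with only a positive proportion, not all, of red-regular pairs between two super-clusters joined by a red edge of $\Gamma$. Converting that patchy structure into something the blow-up lemma can absorb is not a routine slicing argument: for each vertex $v$ of $H$ you would have to select a sub-collection of parts inside $S_{g(v)}$ so that for every edge $vw$ of $H$ the chosen sub-collections on the two sides are pairwise red-regular, \emph{simultaneously over all edges incident to $v$}, and neither a system-of-distinct-representatives argument nor the blow-up lemma supplies this. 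The paper avoids the obstacle entirely. It first proves a regularity lemma with a \emph{prescribed} number of parts (Lemma~\ref{lem:regularity_fixednumber}, obtained via random refinement together with the inheritance of regularity, Lemma~\ref{lem:inherit_regular}), takes $\beta$ to be the reciprocal of the threshold $T(\varepsilon)$ from that lemma, and partitions $K_N$ into exactly $k = \hat{r}_\varepsilon(H,w)$ parts. This is made possible by the CFS reduction you also use, which gives $\beta^{-1}\le k < \beta^{-1}c^{\Delta\log\Delta}$, hence $k \ge T(\varepsilon)$ as required. The $\varepsilon$-reduced graph of that partition is then itself the $k$-vertex graph of minimum degree at least $(1-\varepsilon)k$ to which the stable Ramsey number is applied, so the homomorphism from $H$ lands directly on genuinely $\varepsilon$-regular pairs with no coarsening. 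The embedding is completed with the variant blow-up lemma (Lemma~\ref{lem:blowup}) whose essential feature --- and the reason the standard one does not suffice --- is that its threshold $\varepsilon(\xi,\delta,\Delta)$ is independent of the reduced graph and of $k$; this breaks the circularity that otherwise arises because $k = \hat{r}_\varepsilon(H,w)$ depends on $\varepsilon$.

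A secondary issue: you infer minimum degree $(1-\varepsilon)m$ for $\Gamma$ from an $O(\delta m^2)$ bound on its non-edges, but few non-edges alone does not yield a minimum-degree bound; you would need the form of the regularity lemma in which each part forms irregular pairs with only an $\varepsilon$-fraction of the others (the paper's condition (iii) in its definition of $\varepsilon$-regular partition), or else a cleanup step discarding a few bad clusters.
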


Note that the weight function $w$ defined in Theorem~\ref{thm:transference}
satisfies $w(V(H)) = \frac{1}{\beta}$.
A {\em wheel graph} $W_{k}$ is a graph with $k$ vertices consisting of a 
cycle on $k-1$ vertices and a vertex adjacent to all vertices on the cycle. 
In Section~\ref{sec:weighted_ramsey}, we will see that there exist constants $\varepsilon$
and $c$ such that $\hat{r}_\varepsilon(W_k, w) \le c \cdot w(V(W_k))$ for all $k$ and all weight 
functions $w : V(W_k) \rightarrow [0,1]$.
Hence if $G$ has maximum degree at most $\Delta$
and a homomorphism $f$ into a wheel graph $W_k$
where $|f^{-1}(v)| \le \beta n$ for all $v \in V(W_k)$, then 
by Theorem~\ref{thm:transference} above with $\xi = 1$, we obtain 
$r(G) \le 2\hat{r}_\varepsilon(W_k,w) \cdot \beta n \le 2c n$
(see Figure~\ref{fig:fig_wheel}). Note in particular that the constant
does not depend on $\Delta$. This is in sharp contrast with the bound
$r(G) \le c^{\Delta \log \Delta} n$ (the constant $c$ is different from above)
that we obtain through the theorem of Conlon, Fox, and Sudakov. 
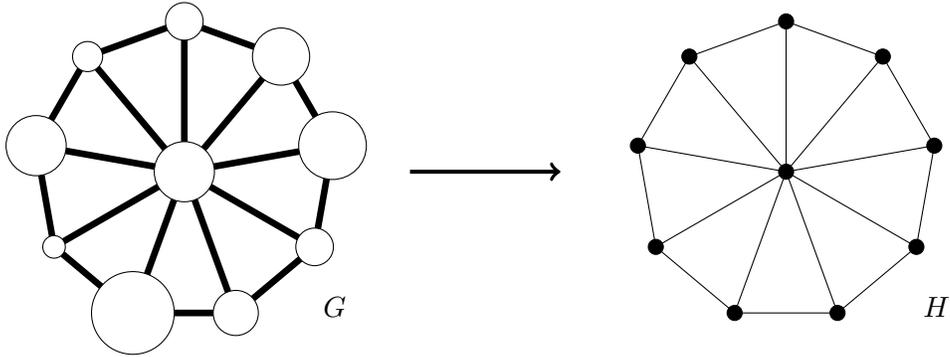
\begin{figure}
  \centering
  \begin{tabular}{ccc}
    \begin{tikzpicture}  \draw (-4.000, 0.000) circle (0.400);  \draw [fill=black] (4.000, 0.000) circle (0.1);  \draw (-4.000, 2.000) circle (0.250);  \draw [line width=2.5 pt] (-4.000,0.400) -- (-4.000, 1.750);  \draw [line width=2.5 pt] (-5.098,1.600) -- (-4.235, 1.914);  \draw [fill=black] (4.000, 2.000) circle (0.1);  \draw (4.000,0.000) -- (4.000, 2.000);  \draw (2.714,1.532) -- (4.000, 2.000);  \draw (-2.714, 1.532) circle (0.380);  \draw [line width=2.5 pt] (-3.743,0.306) -- (-2.959, 1.241);  \draw [line width=2.5 pt] (-3.765,1.914) -- (-3.072, 1.662);  \draw [fill=black] (5.286, 1.532) circle (0.1);  \draw (4.000,0.000) -- (5.286, 1.532);  \draw (4.000,2.000) -- (5.286, 1.532);  \draw (-2.030, 0.347) circle (0.450);  \draw [line width=2.5 pt] (-3.606,0.069) -- (-2.474, 0.269);  \draw [line width=2.5 pt] (-2.524,1.203) -- (-2.255, 0.737);  \draw [fill=black] (5.970, 0.347) circle (0.1);  \draw (4.000,0.000) -- (5.970, 0.347);  \draw (5.286,1.532) -- (5.970, 0.347);  \draw (-2.268, -1.000) circle (0.250);  \draw [line width=2.5 pt] (-3.654,-0.200) -- (-2.484, -0.875);  \draw [line width=2.5 pt] (-2.109,-0.096) -- (-2.225, -0.754);  \draw [fill=black] (5.732, -1.000) circle (0.1);  \draw (4.000,0.000) -- (5.732, -1.000);  \draw (5.970,0.347) -- (5.732, -1.000);  \draw (-3.316, -1.879) circle (0.300);  \draw [line width=2.5 pt] (-3.863,-0.376) -- (-3.419, -1.597);  \draw [line width=2.5 pt] (-2.459,-1.161) -- (-3.086, -1.687);  \draw [fill=black] (4.684, -1.879) circle (0.1);  \draw (4.000,0.000) -- (4.684, -1.879);  \draw (5.732,-1.000) -- (4.684, -1.879);  \draw (-4.684, -1.879) circle (0.550);  \draw [line width=2.5 pt] (-4.137,-0.376) -- (-4.496, -1.363);  \draw [line width=2.5 pt] (-3.616,-1.879) -- (-4.134, -1.879);  \draw [fill=black] (3.316, -1.879) circle (0.1);  \draw (4.000,0.000) -- (3.316, -1.879);  \draw (4.684,-1.879) -- (3.316, -1.879);  \draw (-5.732, -1.000) circle (0.150);  \draw [line width=2.5 pt] (-4.346,-0.200) -- (-5.602, -0.925);  \draw [line width=2.5 pt] (-5.105,-1.526) -- (-5.617, -1.096);  \draw [fill=black] (2.268, -1.000) circle (0.1);  \draw (4.000,0.000) -- (2.268, -1.000);  \draw (3.316,-1.879) -- (2.268, -1.000);  \draw (-5.970, 0.347) circle (0.400);  \draw [line width=2.5 pt] (-4.394,0.069) -- (-5.576, 0.278);  \draw [line width=2.5 pt] (-5.758,-0.852) -- (-5.900, -0.047);  \draw [fill=black] (2.030, 0.347) circle (0.1);  \draw (4.000,0.000) -- (2.030, 0.347);  \draw (2.268,-1.000) -- (2.030, 0.347);  \draw (-5.286, 1.532) circle (0.200);  \draw [line width=2.5 pt] (-4.257,0.306) -- (-5.157, 1.379);  \draw [line width=2.5 pt] (-5.770,0.694) -- (-5.386, 1.359);  \draw [fill=black] (2.714, 1.532) circle (0.1);  \draw (4.000,0.000) -- (2.714, 1.532);  \draw (2.030,0.347) -- (2.714, 1.532);  \draw[->] [line width=1.5 pt] (-1.000, 0.000) -- (1.000, 0.000);  \draw (-2.000, -1.800) node {$G$};  \draw (6.000, -1.800) node {$H$};\end{tikzpicture}
  \end{tabular}
  \caption{A graph of maximum degree at most $\Delta$ and a homomorphism into a wheel graph.}
 \label{fig:fig_wheel}
\end{figure}
As another example, if $H$ has maximum degree at most $d$, then by Theorem~\ref{thm:weight_cfs} 
we see that $\hat{r}_\varepsilon(H,w) \le c^{d \log d}\frac{1}{\beta}$ for small enough $\varepsilon$.
By applying Theorem~\ref{thm:transference} with the $\xi = 1$ and $\varepsilon < c^{d\log d}$, 
we obtain the following corollary.

\begin{cor} \label{cor:transference}
There exists a constant $c$ such that for all $\Delta$, 
there exists $\beta$ and $n_0$ such that the following holds for all $n \ge n_0$. 
Let $G$ be a $n$-vertex graph with maximum degree at most $\Delta$, and $H$ be a graph with
maximum degree at most $d$. Suppose that there exists
a homomorphism $f$ from $G$ to $H$ for which $|f^{-1}(v)| \le \beta n$ for all $v \in V(H)$.
Then $r(G) \le c^{d \log d} n$.
\end{cor}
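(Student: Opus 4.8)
The plan is to obtain Corollary~\ref{cor:transference} by inserting the bound of Theorem~\ref{thm:weight_cfs} on $\hat r_\varepsilon(H,w)$ into Theorem~\ref{thm:transference}; the only subtlety is the order in which the parameters are fixed. Let $c_0$ be the constant supplied by Theorem~\ref{thm:weight_cfs}, which we may assume to be at least $2$, and I claim the corollary holds with $c = 2c_0$. Fix $\Delta$, and write $d$ for the maximum degree of $H$. If $d \ge \Delta + 1$ there is nothing to prove: the theorem of Conlon, Fox, and Sudakov~\cite{CoFoSu} (which Theorem~\ref{thm:weight_cfs} strengthens) already gives $r(G) \le c_0^{\Delta\log\Delta}n \le c_0^{d\log d}n \le c^{d\log d}n$, using $d \ge \Delta$ and $c_0 \ge 1$. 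So it suffices to treat $2 \le d \le \Delta$, and it is precisely this reduction to $d \le \Delta$ that permits the remaining parameters to be chosen in terms of $\Delta$ alone. (The degenerate cases $d \le 1$, in which $H$ is a matching together with isolated vertices, carry no real content and are handled separately, or the exponent is read with the customary convention.)

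So suppose $2 \le d \le \Delta$. Set $\varepsilon := \tfrac12 c_0^{-\Delta\log\Delta}$, apply Theorem~\ref{thm:transference} with this $\Delta$, with $\xi = 1$, and with this $\varepsilon$, and let $\beta$ and $n_0$ be the resulting constants, which depend on $\Delta$ only. Fix $n \ge n_0$ and graphs $G, H$ and a homomorphism $f$ as in the statement, and put $w(v) = \frac{1}{\beta n}|f^{-1}(v)|$ as in Theorem~\ref{thm:transference}; since $f$ is defined on all of $V(G)$ this gives $w(V(H)) = \frac{1}{\beta n}\sum_v |f^{-1}(v)| = \frac{1}{\beta}$. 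As $d \le \Delta$ we have $d\log d \le \Delta\log\Delta$, hence $\varepsilon < c_0^{-\Delta\log\Delta} \le c_0^{-d\log d}$, so Theorem~\ref{thm:weight_cfs} applies to the weighted graph $(H,w)$, which has maximum degree $d$, and shows that $\hat r_\varepsilon(H,w)$ is finite with
\[
  \hat r_\varepsilon(H,w) \;\le\; c_0^{d\log d}\, w(V(H)) \;=\; \frac{c_0^{d\log d}}{\beta}.
\]
Feeding this into Theorem~\ref{thm:transference} with $\xi = 1$ gives
\[
  r(G) \;\le\; 2\,\hat r_\varepsilon(H,w)\cdot\beta n \;\le\; 2\cdot\frac{c_0^{d\log d}}{\beta}\cdot\beta n \;=\; 2\,c_0^{d\log d}\,n \;\le\; (2c_0)^{d\log d}\,n \;=\; c^{d\log d}\,n,
\]
the last step using $d\log d \ge 1$ (valid since $d \ge 2$), so that $2 \le 2^{d\log d}$. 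This is the asserted bound.

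I do not expect a genuine obstacle — the statement is a corollary of the two preceding theorems — but the one thing that must be arranged correctly is the interplay of quantifiers. Theorem~\ref{thm:transference} produces $\beta$ as a function of $\varepsilon$ (among other things), while Theorem~\ref{thm:weight_cfs} can be invoked for $(H,w)$ only when $\varepsilon < c_0^{-d\log d}$; naively this would make $\varepsilon$, and hence $\beta$, depend on $d$, which is not allowed in the corollary's quantifier order. The point of the first paragraph is that the case $d > \Delta$ is trivial, so one may assume $d \le \Delta$ throughout, and then the single choice $\varepsilon = \tfrac12 c_0^{-\Delta\log\Delta}$ works uniformly, yielding $\beta = \beta(\Delta)$ as required.
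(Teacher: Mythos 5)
Your proof is correct and follows the same route the paper sketches: plug the bound $\hat r_\varepsilon(H,w) \le c_0^{d\log d}/\beta$ from Theorem~\ref{thm:weight_cfs} into Theorem~\ref{thm:transference} with $\xi=1$. The one substantive thing you add is the case split $d>\Delta$ versus $d\le\Delta$, which is in fact needed: the paper's inline derivation says ``for small enough $\varepsilon$'' and then ``$\varepsilon < c^{d\log d}$'' (a sign typo aside), which would make $\varepsilon$, and hence the $\beta$ produced by Theorem~\ref{thm:transference}, depend on $d$ --- but in the corollary $\beta$ and $n_0$ are quantified before $d$. Your observation that the case $d>\Delta$ is subsumed by the Conlon--Fox--Sudakov bound, so that one may take $\varepsilon = \tfrac12 c_0^{-\Delta\log\Delta}$ uniformly, is exactly the right repair, and the final arithmetic $2c_0^{d\log d} \le (2c_0)^{d\log d}$ (using $d\log d \ge 2$ for $d\ge 2$) is sound. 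You also correctly flag that the statement is vacuous or false as literally written when $d\le 1$ (for instance $G$ a perfect matching maps to $H=K_2$ but $r(G) = 3n/2 - O(1) > n = c^{1\cdot\log 1}n$), a degeneracy the paper does not address either; reading the exponent with the usual convention $d\log d \gets \max(d\log d, 1)$ or simply restricting to $d\ge 2$ resolves it.
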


It is known (implicitly in \cite{BoScTa}) that for all $r$, there exists
a constant $c > 1$ such that if $G$ is an $r$-partite graph of 
bandwidth at most $\beta n$, 
then there exists a homomorphism $f$ from $G$ to the $r$-th power of a path of length 
$\frac{1}{c \beta}$ where $|f^{-1}(v)| \le c \beta n$ for all $v$.
Allen, Brightwell, and Skokan's result $r(G) \le (2\chi(G)+4)n$ mentioned above then follows 
from Theorem~\ref{thm:transference} and a
bound on the Ramsey number of power of paths  (in fact the proof of Theorem~\ref{thm:transference} is based on a generalization of their proof).

The necessity of forcing $|f^{-1}(v)| \le \beta n$ for some small constant $\beta$
can be seen from the example of Graham, R\"odl, and Ruci\'nski. 
They proved that there exists a constant $c < 1$ such that
for all $\Delta$ and large enough $n$, there exists 
a $c^{\Delta}n$-vertex bipartite graph $G$ of maximum degree at most $\Delta$ for which
$r(G) > n$.
If $\beta \ge 20c^{\Delta}$ in Theorem~\ref{thm:transference}, then
there exists a homomorphism $f$ from $G$ to $K_2$ such that $|f^{-1}(v)| \le \frac{1}{20}\beta n$
for both vertices $v$ of $K_2$. Since $\hat{r}_\varepsilon(K_2,w) = 2$, 
Theorem~\ref{thm:transference} (if true) will imply that $r(G) \le (1+\xi)2\beta n < n$ which is a contradiction.
Thus we see that $\beta$ must be at most $20c^{\Delta}n$ in Theorem~\ref{thm:transference}.
On the other hand, the bound on $\beta$ that we obtain in Theorem~\ref{thm:transference} has a tower-type dependency on $\Delta$.
It would be interesting to determine the best possible value of $\beta$ that we can take.
The following density embedding theorem has an interesting implication towards this problem.

\begin{thm} \label{thm:densityembedding}
Let $G$ be an $n$-vertex graph of minimum degree at least $(\delta + \alpha)n$. Then $G$ contains
all bipartite graphs $H$ on at most $\delta n$ vertices with maximum degree at most $\Delta$ and bandwidth at most $\frac{1}{256\Delta}\alpha^{6\Delta+1}n$.
\end{thm}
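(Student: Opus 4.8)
The plan is to use the bandwidth hypothesis to reduce $H$ to a ``path of tiny chunks'', to manufacture inside $G$ a matching skeleton of codegree-dense pieces, and then to embed $H$ greedily one vertex at a time.

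First I would cut up $H$. Write $h=|V(H)|\le\delta n$ and $b=\frac1{256\Delta}\alpha^{6\Delta+1}n$, fix a bandwidth labelling $v_1,\dots,v_h$ of $V(H)$, and partition $[h]$ into consecutive chunks $J_1,\dots,J_k$ of size at most $b$, where $k=\lceil h/b\rceil$. Since every edge of $H$ joins two vertices at distance at most $b$ in the labelling, every edge lies inside a chunk or between two consecutive chunks. Using that $H$ is bipartite with classes $A$ and $B$, split each $J_t$ into the independent sets $A_t=J_t\cap A$ and $B_t=J_t\cap B$. The resulting $2k$ ``fibres'' each have size at most $b$, each is independent in $H$, and an $H$-edge runs only between an $A$-fibre and a $B$-fibre from the same or an adjacent chunk. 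Hence $H$ admits a homomorphism into a graph $T$ on these $2k$ fibres whose maximum degree is an absolute constant, with $T$ a subgraph of a power of a path; this is precisely the $r=2$ instance of the path-power homomorphism fact recalled in the introduction.

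The heart of the argument is to build a compatible skeleton inside $G$: pairwise disjoint sets $\{U_f:f\in V(T)\}$ with $|U_f|$ slightly larger than the fibre $f$ it is to host, and enjoying the following \emph{codegree-richness}: for every fibre $f$, every set of at most $\Delta$ vertices contained in the union of the $U_{f'}$ over neighbours $f'$ of $f$ in $T$ has more than $h$ common neighbours inside $U_f$. The extra room needed so that the $U_f$ can contain the fibres is affordable because $\sum_f|\text{fibre }f|=h\le\delta n$ while $\sum_f|U_f|\le n$, leaving about $\alpha n$ to spend. The delicate point is the codegree-richness: because $G$ has density only $\delta+\alpha<1$, common neighbourhoods of even a couple of vertices can be empty, so the richness cannot be read off from $G$ directly and must instead be extracted by an iterative trimming (dependent-random-choice style) argument — repeatedly pass to a large subset on which the relevant minimum codegrees are boosted, running one round for each constraint that a prospective image must meet. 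Heuristically this is a bounded multiple of $\Delta$ rounds: one for each of the up to $\Delta$ already-placed neighbours, with additional factors for the two bipartition classes and for the left, current and right chunks a fibre interacts with, which is what pushes the exponent up to roughly $6\Delta$. Each round costs a factor of about $\alpha$ in the surviving set size and carries a union bound polynomial in $\Delta$; tracking these losses is exactly what forces $|U_f|\ge b=\frac1{256\Delta}\alpha^{6\Delta+1}n$ and pins down the shape of the constant.

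With the skeleton in hand the embedding is routine. Process $v_1,\dots,v_h$ in the bandwidth order, always placing $v_j$ into the set $U_{f(v_j)}$ indexed by its fibre. When $v_j$ is reached, its already-embedded neighbours number at most $\Delta$, and by the construction of $T$ each of them sits in a fibre adjacent to $f(v_j)$, so their images lie in the union of the $U_{f'}$ over neighbours of $f(v_j)$; codegree-richness then supplies more than $h$ common neighbours of these $\le\Delta$ images inside $U_{f(v_j)}$, and since at most $h$ vertices of $G$ have been used so far, an unused one remains, which we take to be $\phi(v_j)$. Induction on $j$ produces the desired copy of $H$, and a direct check of the size and codegree inequalities confirms that the stated bandwidth threshold suffices. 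I expect the construction of the codegree-rich skeleton to be the genuine obstacle: one must keep each $U_f$ large enough to swallow a chunk of size $b$ while simultaneously guaranteeing that all the relevant $\Delta$-wise codegrees stay above $h$, and reconciling these opposing demands for a completely arbitrary host graph of density only $\delta+\alpha$ is where essentially all of the work — and all of the powers of $\alpha$ — must be spent.
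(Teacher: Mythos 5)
Your high-level plan (cut $H$ into bandwidth-width chunks, run a dependent-random-choice argument to manufacture codegree-rich landing sets, embed greedily in bandwidth order) matches the spirit of the paper's proof, but the central object you want to build does not exist, and the paper's actual construction is structured quite differently to get around exactly this obstacle.

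The gap is in the skeleton. You ask for \emph{pairwise disjoint} sets $U_f$, each only slightly larger than the fibre it hosts (so of size comparable to $b\approx\alpha^{6\Delta+1}n$), with \emph{universal} codegree-richness: every $\le\Delta$-set drawn from the $U_{f'}$ adjacent to $f$ should have $>h$ common neighbours inside $U_f$. Dependent random choice does not deliver this. What it delivers (Lemma~\ref{lem:maxdegree_drc}) is a \emph{large} set $X$ of size $\Omega(\alpha^{2\Delta}n)$ in which \emph{most} $\Delta$-tuples have many common neighbours in the whole graph, together with control on the number of bad tuples. One cannot upgrade ``most'' to ``all'' without an additional cleaning step, and one certainly cannot shrink $X$ down to size $\approx b$ while retaining anything: a bad $\Delta$-tuple may land entirely inside any small subset. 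Moreover, disjointness is incompatible with the sizes forced by DRC: each $X_t$ in the paper has size $\ge\tfrac12\alpha^{2\Delta+1}n$, so at most $O(\alpha^{-2\Delta-1})$ such sets could be disjoint, whereas the number of chunks is $\Theta(\delta n/b)$, which grows with $n$. The paper instead builds a sequence of \emph{large, heavily overlapping} sets $X_0,X_1,\dots$, one per chunk, with the overlap bound $|X_t\cap X_{t+1}|\ge\tfrac18\alpha^{4\Delta+1}n$ playing the role your ``slightly larger'' buffer was supposed to play; the exponent $6\Delta+1$ is exactly the ratio between this overlap and the bad-tuple density $\gamma=16\beta\alpha^{-2\Delta}$, not a count of ``rounds''. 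Finally, because only most tuples are good, the embedding itself must actively steer around bad tuples: the paper maintains quantitative invariants (its properties (d1), (d2)) on how many bad $\Delta$-tuples the partial image of each not-yet-placed $A$-vertex's neighbourhood is contained in, and chooses each $\phi(b)$ to keep those counts decaying geometrically. This bookkeeping is the real content of the proof and is entirely absent from your sketch; ``routine'' greedy embedding into pre-certified rich sets is not available here.
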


Theorem~\ref{thm:densityembedding} can be seen as an extension of density embedding theorems of bipartite graphs proved by Conlon \cite{Conlon}, and Fox and Sudakov \cite{FoSu0}, and may be of independent interest.
Note that Theorem~\ref{thm:densityembedding} is asymptotically tight in terms of
the number of vertices of $H$.
As we will later see (Corollary~\ref{cor:bandwidth}), it implies that 
$r(G) \le (4+\varepsilon)n$ if
$G$ is a bipartite graph with maximum degree at most $\Delta$ and bandwidth at most $c^{\Delta}n$
for some positive constant $c < 1$.
Similar result can be obtained by the theorem of Allen, Brightwell, and Skokan
but with a worse bound on the bandwidth. This corollary implies that a transference-type
result holds even when $\beta$ is as large as $c^\Delta n$ for the special case when
$G$ is a bipartite graph with small bandwidth.

A $d$-dimensional hypercube $Q_d$ is a graph with vertex set $\{0,1\}^d$ where
two vertices are adjacent if and only if they differ in exactly one coordinate.
Since the bandwidth of $Q_d$ is known to be $O(\frac{2^d}{d})$, 
Theorem~\ref{thm:densityembedding} is closely related to another conjecture of
Burr and Erd\H{o}s stating that there exists a constant $c$ such that $r(Q_d) \le c 2^d$ holds for all natural numbers $d$.
Unfortunately the bandwidth condition in Theorem~\ref{cor:bandwidth}
is too weak to imply the conjecture.

\medskip

The rest of the paper is organized as follows. 
In Section~\ref{sec:transference} we prove Theorem~\ref{thm:transference} using a variant of the blow-up lemma
whose proof we defer to a later section.
In Section~\ref{sec:weighted_ramsey} we establish a bound on the weighted Ramsey number
of wheel graph and prove Theorem \ref{thm:weight_cfs}. 
In Section~\ref{sec:blowup} we prove the variant of the blow-up lemma used in Section~\ref{sec:transference}. 
In Section~\ref{sec:bipartite_ramsey}, we prove Theorem~\ref{thm:densityembedding} and then conclude with some remarks in Section~\ref{sec:remarks}.

\medskip

\noindent \textbf{Notation}.
A graph $G = (V,E)$ is given by a pair of vertex set $V$ and edge set $E$. 
For a vertex $v \in V$, define $\deg(v)$ as the degree of $v$, 
and for a set $X \subseteq V$, define $\codeg(X)$ as the number of common neighbors
of the vertices in $X$. For two vertices $v,w \in V$, we define $\codeg(v,w) = \codeg(\{v,w\})$.
For a set $X \subset V$, define $G[X]$ as the subgraph of $G$ induced on $X$.
For a pair of sets $X, Y \subseteq V$, define $e(X,Y)$ as the number of pairs
$(x,y) \in X \times Y$ that form an edge in $G$. When $X,Y$ are disjoint sets,
define $d(X,Y) = \frac{e(X,Y)}{|X||Y|}$.
When there are several graphs
under consideration, we often use subscript such as in $e_G(X,Y)$ to clarify
the graph that we are referring to.
For two graphs $H$ and $G$, an {\em embedding} of $H$ to $G$ is an injective
map $f : V(H) \rightarrow V(G)$ for which $\{f(v), f(w)\} \in E(G)$ 
whenever $\{v,w\} \in E(H)$. 
An {\em embedding} of a weighted graph $(H,w)$ into a graph $G$ is a
map $f : V(H) \rightarrow V(G)$ for which $\{f(v), f(w)\} \in E(G)$
whenever $\{v,w\} \in E(H)$ and $w(f^{-1}(v)) \le 1$ for all $v \in V(H)$.
For a set $X \subseteq V(H)$, a 
{\em partial embedding on $X$} is an embedding of $H[X]$ to $G$.
For a finite set $X$ and a natural number $n$, we use the notation 
$X^n$ to denote the product space $X \times X \times \cdots \times X$
where product is taken $n$ times. Equivalently, $X^n$ is the set of
ordered $n$-tuples of elements of $X$.

We use $\log$ without subscript to denote base 2 logarithm.
We omit floor and ceiling whenever they are not crucial.
Throughout the paper, we use constants with subscripts such as in
$\beta_{2.3}$ to indicate that $\beta$ is the constant coming from
Theorem/Corollary/Lemma/Proposition 2.3.

\section{Transference principle}  \label{sec:transference}

Let $G$ be a graph on $n$ vertices.
A pair of disjoint vertex subsets $(X,Y)$ is {\em $\varepsilon$-regular} if for all $X' \subseteq X$ and $Y' \subseteq Y$
satisfying $|X'| \ge \varepsilon|X|$ and $|Y'| \ge \varepsilon|Y|$, we have
$\left| d(X,Y) - d(X',Y') \right| \le \varepsilon$. 
A partition $V(G) = V_0 \cup V_1 \cup \cdots \cup V_k$ is {\em $\varepsilon$-regular} if
(i) $|V_0| \le \varepsilon n$, (ii) $|V_i| = |V_j|$ for all $i,j \ge 1$, and (iii)
for each $i \in [k]$, there exists at most $\varepsilon k$ indices $j \in [k]$ for which
$(V_i, V_j)$ is not $\varepsilon$-regular. 
\footnote{We deviate from the standard practice enforcing at most $\varepsilon k^2$ 
pairs that are not $\varepsilon$-regular.}
We define the {\it $\varepsilon$-reduced graph} of a
partition $\{V_i\}_{i =0}^{k}$ as the graph 
with vertex set $[k]$ where $V_i$ and $V_j$ forms an edge if and only if
the pair $(V_i, V_j)$ is $\varepsilon$-regular. Note that condition (iii) is equivalent to 
saying that the $\varepsilon$-reduced graph of the partition has minimum degree at least $(1-\varepsilon)k$.
For a real number $\delta$, we define the {\it $(\varepsilon,\delta)$-reduced graph} of a 
partition $\{V_i\}_{i=0}^{k}$ as the graph with vertex set $[k]$ where $V_i$ and $V_j$ forms 
an edge if and only if the pair $(V_i, V_j)$ is $\varepsilon$-regular with density at least
$\delta$.
The celebrated regularity lemma asserts that all large graphs admit an $\varepsilon$-regular partition
(see \cite{KoSi} for the version of the regularity lemma as stated here).

\begin{thm} \label{thm:regularity}
For all $\varepsilon$ and $t$, there exists $n_0 = n_0(\varepsilon,t)$ and $T = T(\varepsilon,t)$
such that the following holds for all $n \ge n_0$. 
Every $n$-vertex graph $G$ admits an $\varepsilon$-regular partition into $k$ parts
where $t \le k \le T$.
\end{thm}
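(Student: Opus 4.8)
The plan is to obtain this statement, which is Szemer\'edi's regularity lemma in the slightly non-standard form where \emph{every} cluster is $\varepsilon$-regular with all but an $\varepsilon$-fraction of the other clusters, from the ordinary regularity lemma (bounding the number of \emph{pairs} that are irregular), which I would in turn prove by the classical energy-increment argument. So the proof splits in two: (a) the standard lemma, and (b) a cleanup step converting ``few irregular pairs'' into ``no cluster is in many irregular pairs''. If one is willing to cite \cite{KoSi} for (a), then only (b) requires any work.

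For (a) I would run the usual defect-Cauchy--Schwarz / energy-increment scheme. Define the energy $q(\mathcal{P}) = \sum_{i,j}\frac{|V_i||V_j|}{n^2}\,d(V_i,V_j)^2 \in [0,1]$ of a partition $\mathcal P$, start from an arbitrary equipartition into $t$ parts (with an empty or tiny exceptional class $V_0$), and iterate: as long as the current partition into $k$ parts has more than $\varepsilon' k^2$ irregular pairs, choose for each irregular pair a witnessing pair of subsets and refine every $V_i$ by the common refinement of all witness sets meeting it; the defect form of Cauchy--Schwarz then forces $q$ to increase by at least $\Omega(\varepsilon'^{5})$. Since $0 \le q \le 1$, this happens at most $O(\varepsilon'^{-5})$ times, and each refinement multiplies the number of parts by at most $4^{k}$; hence the process halts with the number of parts bounded by a tower of height $O(\varepsilon'^{-5})$, yielding $T(\varepsilon',t)$ and $n_0(\varepsilon',t)$, while re-equipartitioning after each step keeps $|V_0| \le \varepsilon' n$.

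For (b), assume $\varepsilon \le 1$, set $\varepsilon' := \min\{\varepsilon,\ \varepsilon^2/16\}$, and apply (a) with parameter $\varepsilon'$ and with lower bound $2t$ on the number of parts, getting $V_0 \cup V_1 \cup \dots \cup V_k$ with $2t \le k \le T(\varepsilon',2t)$, $|V_0| \le \varepsilon' n$, all $|V_i|$ equal for $i \ge 1$, and at most $\varepsilon' k^2$ pairs $(V_i,V_j)$ not $\varepsilon'$-regular. Call $V_i$ \emph{bad} if it lies in at least $\sqrt{\varepsilon'}\,k$ irregular pairs; double-counting endpoints of irregular pairs ($\le 2\varepsilon' k^2$ of them) gives at most $2\sqrt{\varepsilon'}\,k$ bad clusters. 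Move every bad cluster into $V_0$. The new exceptional set has size at most $\varepsilon' n + 2\sqrt{\varepsilon'}\,n \le \tfrac{\varepsilon}{16}n + \tfrac{\varepsilon}{2}n \le \varepsilon n$. The $k' \ge (1-2\sqrt{\varepsilon'})k \ge k/2 \ge t$ surviving clusters still have a common size, and each survivor lies in at most $\sqrt{\varepsilon'}\,k \le 2\sqrt{\varepsilon'}\,k' \le \varepsilon k'$ irregular pairs among the survivors (using $k \le 2k'$ and $2\sqrt{\varepsilon'} \le \varepsilon$, valid since $\varepsilon' \le \varepsilon^2/4$); and since $\varepsilon' \le \varepsilon$, every $\varepsilon'$-regular pair among survivors is $\varepsilon$-regular. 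Relabelling the survivors gives the required $\varepsilon$-regular partition, with $T := T(\varepsilon',2t)$ and $n_0 := n_0(\varepsilon',2t)$.

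The main obstacle is not conceptual: the heaviest piece is the energy-increment bookkeeping in (a) — simultaneously showing that each irregular pair forces an $\Omega(\varepsilon'^{5})$ energy gain, bounding the growth of the number of parts to get the tower-type $T$, and keeping $|V_0|$ under control — but all of this is entirely classical, and (b) is merely constant-chasing. The only point that needs care is the choice $\varepsilon' = \min\{\varepsilon,\varepsilon^2/16\}$ and the lower bound $2t$ (rather than $t$) fed into (a), so that losing the bad clusters still leaves $\ge t$ parts and an exceptional set of size $\le \varepsilon n$.
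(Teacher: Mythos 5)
The paper does not prove Theorem~\ref{thm:regularity}; it simply cites the survey of Koml\'os and Simonovits for the regularity lemma in this particular form (every cluster is $\varepsilon$-regular with all but an $\varepsilon$-fraction of the others, rather than the more common ``all but $\varepsilon k^2$ pairs are $\varepsilon$-regular''). Your proposal therefore cannot match ``the paper's proof,'' but what you wrote is a correct and self-contained derivation of this form, and it is worth recording. Part~(a) is just the classical energy-increment proof of Szemer\'edi's lemma and is fine as a sketch (the $\Omega(\varepsilon'^{5})$ gain per iteration, tower-type $T$, and control of $|V_0|$ by re-equipartitioning are all standard). The substantive content is part~(b), and your bookkeeping there checks out: with $\varepsilon' = \min\{\varepsilon,\varepsilon^2/16\}$, calling a cluster bad if it lies in $\ge \sqrt{\varepsilon'}k$ irregular pairs, double counting gives at most $2\sqrt{\varepsilon'}k$ bad clusters; discarding them into $V_0$ costs at most $\varepsilon' n + 2\sqrt{\varepsilon'} n \le \varepsilon n$; the survivors number $k' \ge (1-2\sqrt{\varepsilon'})k \ge k/2 \ge t$ (hence the correct choice of lower bound $2t$ in the application of~(a)); each survivor lies in at most $\sqrt{\varepsilon'}k \le 2\sqrt{\varepsilon'}k' \le \varepsilon k'$ irregular pairs among survivors; and $\varepsilon'$-regularity implies $\varepsilon$-regularity since $\varepsilon'\le\varepsilon$. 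The equal-size requirement on $V_1,\dots,V_{k'}$ is preserved because discarding whole clusters does not change the size of the survivors. One cosmetic remark: you could skip part~(a) entirely and cite the standard regularity lemma as a black box; the only new content is the cleanup step, which is exactly the reduction one would want to make explicit given the paper's footnote acknowledging its nonstandard convention.
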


We will later need an $\varepsilon$-regular partition with a prescribed number of parts. 
Such partition can be produced by taking a random refinement of an $\varepsilon$-regular
partition obtained through the regularity lemma.
The following lemma, proved in \cite{GeKoRoSt}, can be used to verify that such partition indeed works.
It asserts that a typical pair of subsets of a regular pair is regular.

\begin{lem} \label{lem:inherit_regular}
For $0 < \beta, \varepsilon < 1$, there exists $\varepsilon_0 = \varepsilon_0(\beta, \varepsilon)$
and $C = C(\varepsilon)$ such that for all $\varepsilon' \le \varepsilon_0$ and $\delta$,  
every $\varepsilon'$-regular pair $(X,Y)$ of density at least $\delta$ 
satisfies that, for every $q \ge C \delta^{-1}$, 
the number of sets $Q \subseteq X$ of cardinality $q$ that form an $\varepsilon$-regular
pair of density at least $\delta$ with $Y$ is at least $(1-\beta^{q}){|V_1| \choose q}$.
\end{lem}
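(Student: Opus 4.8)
The plan is to establish the equivalent probabilistic statement: if $Q$ is a uniformly random $q$-element subset of $X$, then with probability at least $1-\beta^{q}$ the pair $(Q,Y)$ is $\varepsilon$-regular and has density at least $\delta$. A feature recurring throughout is that $\varepsilon_0$ --- and hence the regularity parameter $\varepsilon'$ of the given pair $(X,Y)$ --- is to be taken extremely small as a function of \emph{both} $\beta$ and $\varepsilon$: we will repeatedly need the probability that $Q$ meets some small ``exceptional'' set $E\subseteq X$, and a bound of the shape $\bigl(e|E|/(t|X|)\bigr)^{t}$ drops below $\beta^{q}$ only once $|E|/|X|$ is sufficiently tiny.

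The density is the easy half. For a fixed $y\in Y$ the count $|N(y)\cap Q|$ is the number of sampled elements lying in $N(y)\cap X$ when $q$ elements of $X$ are drawn without replacement, so by Hoeffding's inequality (valid also for sampling without replacement) it concentrates about $q\,|N(y)\cap X|/|X|$. Averaging over $y\in Y$ and using $d(X,Y)\ge\delta$ shows that $d(Q,Y)$ differs from $d(X,Y)$ by a negligible amount except with probability at most $\tfrac13\beta^{q}$, which forces $d(Q,Y)\ge\delta$; the hypothesis $q\ge C\delta^{-1}$ is exactly what guarantees that a typical vertex of $Y$ already has enough neighbours inside $Q$ for these estimates to be meaningful.

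For $\varepsilon$-regularity I would first reduce --- using that the condition need only be checked on tested sets of the smallest admissible sizes, since $d(Q',Y')$ is an average of $d(Q'',Y')$ over the subsets $Q''\subseteq Q'$ of size $\lceil\varepsilon|Q|\rceil$, and similarly on the $Y$ side --- to ruling out a ``witness'': a set $Q'\subseteq Q$ with $|Q'|=\lceil\varepsilon q\rceil$ and a set $Y'\subseteq Y$ with $|Y'|=\lceil\varepsilon|Y|\rceil$ for which $|d(Q',Y')-d(Q,Y)|>\varepsilon$. Conditioning on the overwhelmingly likely event $d(Q,Y)\approx d(X,Y)=:d$, a witness forces $d(Q',Y')>d+\varepsilon/2$ or $d(Q',Y')<d-\varepsilon/2$; and for a \emph{fixed} $Q'$ the extremal $Y'$ --- the $\lceil\varepsilon|Y|\rceil$ vertices of $Y$ with the most, respectively fewest, neighbours in $Q'$ --- is determined, so being a ``bad'' set is a deterministic property of an $\lceil\varepsilon q\rceil$-element subset of $X$. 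It therefore suffices to show that the fraction of bad $\lceil\varepsilon q\rceil$-subsets of $X$ is below $\binom{q}{\lceil\varepsilon q\rceil}^{-1}\tfrac13\beta^{q}$, and then sum $\Pr[S\subseteq Q]$ over the bad $S$. To estimate that fraction one uses $\varepsilon'$-regularity of $(X,Y)$: for a fixed $Y'$ with $|Y'|\ge\varepsilon|Y|$, the set of $x\in X$ whose density to $Y'$ deviates from $d$ by more than $\varepsilon/4$ has size at most $2\varepsilon'|X|$, so a random small $S$ is very unlikely to contain the $\approx\varepsilon^{2}q$ vertices of such a set that a witness requires.

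The delicate point --- and the step I expect to be the real obstacle --- is that this exceptional subset of $X$ depends on $Y'$, whereas $|Y|$ may be vastly larger than $q$, so there is no room for a union bound over the subsets $Y'\subseteq Y$; the only lever is that $\varepsilon'$ can be made as small as we please in terms of $\beta$ and $\varepsilon$. I would follow Gerke--Kohayakawa--R\"odl--Steger in getting around this by passing to the codegree (``deviation'') reformulation of regularity, which replaces ``for all $Y'$'' by control of the single quantity $\sum_{y\in Y}|N(y)\cap Q|^{2}=\sum_{x,x'\in Q}\codeg_Y(x,x')$: one writes each $\codeg_Y(x,x')$ as $d^{2}|Y|$ plus a residual, bounds the total $\ell_{2}$-mass of these residuals over $X\times X$ through the $C_{4}$-counting lemma for the $\varepsilon'$-regular pair $(X,Y)$, and then shows that a random $Q$ contains few ``heavy'' pairs. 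Here $\varepsilon'$-regularity enters a second time, via heredity of regular pairs, to forbid large cliques or dense blobs in the ``heavy-codegree graph'' on $X$ --- it is precisely this structural input that prevents a quadratic (rather than linear) quantity from fluctuating too wildly, and it is the sharpness of the resulting concentration, needed to beat $\beta^{q}$ for arbitrary $\beta$ once $\varepsilon_0$ is chosen small enough, that forms the technical heart of the argument.
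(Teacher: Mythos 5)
The paper does not give its own proof of this lemma; it cites Gerke, Kohayakawa, R\"odl, and Steger \cite{GeKoRoSt} and uses the statement as a black box, so there is no in-paper argument to compare your proposal against. Assessed on its own terms, your sketch isolates the genuine obstacle correctly: a union bound over all witness sets $Y'\subseteq Y$ is hopeless because $|Y|$ can be arbitrarily large relative to $q$, and the workaround --- trading the ``for all $Y'$'' quantifier for control of a single moment-type functional such as $\sum_{x,x'\in Q}\codeg_Y(x,x')$, estimated via cherry and $C_4$ counts that $\varepsilon'$-regularity of $(X,Y)$ furnishes --- is exactly the GKRS mechanism.

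That said, what you have is a pointer, not a proof. The step you explicitly defer --- showing that this quadratic functional of the random set $Q$ concentrates with failure probability below $\beta^q$ once $\varepsilon'$ is small enough in terms of \emph{both} $\beta$ and $\varepsilon$, and that small deviation of it forces $(Q,Y)$ to be $\varepsilon$-regular --- is the entire technical content of the lemma, and nothing in the sketch makes it plausible beyond citation. Two smaller points. For the density half, Hoeffding applied to each $|N(y)\cap Q|$ followed by ``averaging over $y$'' is not a valid concentration argument for $d(Q,Y)$: a per-$y$ tail bound would require a union bound over $Y$, which is precisely what you cannot afford; one should instead apply a single hypergeometric-type concentration to $e(Q,Y)=\sum_{x\in Q}\deg_Y(x)$ as a function of the random set $Q$. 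And the appeal to ``heredity of regular pairs to forbid dense blobs in a heavy-codegree graph'' is a gesture rather than an argument --- it is not clear what heredity statement you mean or how it would be invoked, and the GKRS control of the deviation mass comes from moment counting on the $\varepsilon'$-regular pair, not from a separate heredity lemma. So the roadmap is on target, but the proof has a gap precisely where the lemma is hard.
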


By combining Theorem~\ref{thm:regularity} and Lemma~\ref{lem:inherit_regular}, we can prove
a regularity lemma which outputs a partition with a prescribed number of parts.

\begin{lem} \label{lem:regularity_fixednumber}
For all $\varepsilon$, there exists $T = T(\varepsilon)$ such that for all 
$k \ge T$ there exists $n_0(\varepsilon, k)$ such that the following holds for all $n \ge n_0$.
Every $n$-vertex graph $G$ admits an $\varepsilon$-regular partition 
$V_0 \cup V_1 \cup \cdots \cup V_k$.
\end{lem}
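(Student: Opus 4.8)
The plan is to first apply the regularity lemma (Theorem~\ref{thm:regularity}) to obtain a coarse $\varepsilon'$-regular partition with a bounded (but uncontrolled) number of parts, and then randomly refine each part into a prescribed number of pieces so that the total number of parts comes out to exactly $k$. The key point is that, by Lemma~\ref{lem:inherit_regular}, a uniformly random small subset of one side of an $\varepsilon'$-regular pair of density at least $\delta$ forms an $\varepsilon$-regular pair of density at least $\delta$ with the other side with overwhelming probability; hence a random refinement inherits regularity across almost all pairs.

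In more detail: given the target $\varepsilon$, choose auxiliary parameters $\varepsilon' \le \varepsilon_0(\beta,\varepsilon)$ from Lemma~\ref{lem:inherit_regular} (with $\beta$ a small constant, say $\beta = 1/2$) and $\varepsilon'$ also at most $\varepsilon$, and set a density threshold $\delta$ that goes to $0$; actually since Lemma~\ref{lem:inherit_regular} handles all densities $\delta$, it is cleanest to treat regular pairs of density below some tiny $\delta_0$ as ``bad'' pairs (there are few of them, or we simply absorb the issue by noting the statement only asks for $\varepsilon$-regularity, and a pair of density $<\delta_0$ is automatically $\varepsilon$-regular once $\delta_0 < \varepsilon$ — wait, that is false, so instead one should apply the lemma with $\delta$ ranging and only worry about pairs whose density is at least, say, $\varepsilon$, the rest being vacuously close in the sense needed). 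First I would run Theorem~\ref{thm:regularity} with parameter $\varepsilon'$ and lower bound $t$ large enough that $\varepsilon' k \ge 2$ etc., obtaining a partition $V(G) = W_0 \cup W_1 \cup \cdots \cup W_m$ with $t \le m \le T' = T(\varepsilon',t)$. Then I would enlarge $W_0$ slightly so that $|W_i|$ for $i \ge 1$ is divisible by a suitable integer, and split each $W_i$ ($i \ge 1$) uniformly at random into roughly $k/m$ equal-sized blocks, distributing any remainders into an enlarged exceptional set, so that the resulting refinement has exactly $k$ nonexceptional parts $V_1,\dots,V_k$ of equal size, with $|V_0| \le \varepsilon n$ (this uses $k \ge T$ large and $n$ large). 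For each pair $(W_i, W_j)$ that was $\varepsilon'$-regular with density at least $\varepsilon'$, Lemma~\ref{lem:inherit_regular} and a union bound over the at most $(k/m)^2$ resulting sub-pairs show that, except with probability at most $(k/m)^2 \beta^{q}$ (with $q$ the common block size, which tends to infinity with $n$), all sub-pairs $(V_a, V_b)$ with $V_a \subseteq W_i$, $V_b \subseteq W_j$ are $\varepsilon$-regular. Taking a further union bound over all pairs $(i,j)$, the probability of any failure is $o(1)$, so some refinement works.

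The last thing to check is condition (iii): for each $a \in [k]$, the number of $b \in [k]$ with $(V_a,V_b)$ not $\varepsilon$-regular should be at most $\varepsilon k$. If $V_a \subseteq W_i$, then $(V_a,V_b)$ can fail to be $\varepsilon$-regular only if either $V_b$ lies in a block $W_j$ for which $(W_i,W_j)$ was not $\varepsilon'$-regular — there are at most $\varepsilon' m$ such $j$, contributing at most $\varepsilon' m \cdot (k/m) = \varepsilon' k$ indices $b$ — or $(W_i,W_j)$ was $\varepsilon'$-regular but of density below $\varepsilon'$, in which case we should have chosen $\varepsilon'$ so that such low-density pairs are still $\varepsilon$-regular after refinement; the cleanest fix is to note that if $(W_i,W_j)$ is $\varepsilon'$-regular of density $d < \varepsilon'$, then any sub-pair has density at most $d + \varepsilon' < 2\varepsilon' \le \varepsilon$ on every large sub-rectangle, hence is trivially $\varepsilon$-regular provided $2\varepsilon' \le \varepsilon$; combined with a within-block contribution of at most $1$ (the pairs $(V_a,V_b)$ with both in $W_i$), the total count of bad $b$ is at most $\varepsilon' k + 1 \le \varepsilon k$ for $k$ large. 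I expect the main obstacle to be purely bookkeeping: choosing the auxiliary constants $\beta, \varepsilon', \delta_0$ and the divisibility adjustments so that the exceptional set stays below $\varepsilon n$, the parts come out exactly equal and exactly $k$ in number, and the two sources of irregularity (inherited bad pairs, and within-block pairs) together stay under the sharper threshold $\varepsilon k$ rather than the usual $\varepsilon k^2$. None of these steps is deep, but they must be arranged consistently.
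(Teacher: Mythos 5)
Your overall strategy is the same as the paper's (coarse regular partition from Theorem~\ref{thm:regularity}, random refinement, inheritance of regularity via Lemma~\ref{lem:inherit_regular}, trim to exactly $k$ parts), but two details are wrong. The first is your handling of $\varepsilon'$-regular pairs $(W_i, W_j)$ of density $d < \varepsilon'$: you claim any refined sub-pair $(V_a, V_b)$ is ``trivially $\varepsilon$-regular'' because all large sub-rectangles have density below $2\varepsilon' \le \varepsilon$. But the sub-rectangles relevant to the $\varepsilon$-regularity of $(V_a, V_b)$ --- sets $X' \subseteq V_a$ of size $\ge \varepsilon|V_a| \approx \varepsilon |W_i|/s$ --- are far smaller than the $\varepsilon'|W_i|$ threshold at which the $\varepsilon'$-regularity of $(W_i, W_j)$ controls densities, once $s = \lceil k/m\rceil$ exceeds $\varepsilon/\varepsilon'$; and $k$ is arbitrary. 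If ``small density at the coarse scale'' implied $\varepsilon$-regularity at the refined scale without further input, Lemma~\ref{lem:inherit_regular} would be superfluous. The clean way out is that $\varepsilon'$-regularity is preserved under complementation, so either the pair or its complement has density at least $1/2$, and one applies Lemma~\ref{lem:inherit_regular} to that one with a fixed $\delta$; this is what the paper tacitly does when it invokes Lemma~\ref{lem:inherit_regular} for every regular pair with no density hypothesis.

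Second, the ``within-block contribution of at most $1$'' is a miscount: for a fixed $V_a \subseteq W_i$ there are $s-1 = \lceil k/m\rceil - 1$ other refined blocks inside $W_i$, and nothing controls the regularity of $(V_a, V_b)$ when $V_b$ also lies in $W_i$; this quantity can be large. It stays below $\varepsilon k/2$ only because the coarse partition is forced to have $m \ge 2/\varepsilon$ parts --- the paper's choice $t_{\ref{thm:regularity}} = 1/\varepsilon_0$ exists for exactly this reason --- equivalently, one should fold the $s$ within-block pairs into the $\varepsilon' m \cdot s$ count by observing that $j=i$ is always among the at most $\varepsilon' m$ bad indices for $W_i$ under the paper's convention. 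With both corrections the argument coincides with the paper's proof.
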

\begin{proof}
Let $\varepsilon_0 = \min\{(\varepsilon_0)_{\ref{lem:inherit_regular}}(\frac{1}{2}, \varepsilon), \frac{\varepsilon}{2}\}$,
$C = C_{\ref{lem:inherit_regular}}(\varepsilon)$, and
$T = \frac{2}{\varepsilon} T_{\ref{thm:regularity}}(\varepsilon_0,\frac{1}{\varepsilon_0})$.
Suppose that an integer $k \ge T$ is given.
Apply Theorem~\ref{thm:regularity} with $\varepsilon_{\ref{thm:regularity}} = \varepsilon_0$
and $t_{\ref{thm:regularity}} = \frac{1}{\varepsilon_0}$
to find an $\varepsilon_0$-regular partition
$V_0 \cup V_1 \cup \cdots \cup V_r$ where $\frac{1}{\varepsilon_0} \le r \le \frac{\varepsilon}{2}T$.
Define $s = \left\lceil \frac{k}{r} \right\rceil$ and note that $s \ge \frac{T}{r} \ge \frac{2}{\varepsilon}$.

For each $i \in [r]$, we may assume that $|V_i|$ is divisible by $s$ 
by moving at most $s-1$ vertices from $V_i$ to $V_0$ if necessary.
For each $i \in [r]$, let $V_i = V_{i,1} \cup V_{i,2} \cup \cdots \cup V_{i,s}$
be a partition chosen uniformly at random where $|V_{i,j}| = \frac{1}{s}|V_i|$ for all $j \in [s]$.
By Lemma~\ref{lem:inherit_regular}, if $(V_i, V_{i'})$ is $\varepsilon_0$-regular,
then for all $j,j' \in [s]$, the probability that $(V_{i,j}, V_{i',j'})$ forms 
an $\varepsilon$-regular pair
is at least $1 - 2^{-\Omega(n)}$. Hence by the union bound, we can find partitions
$V_i = V_{i,1} \cup V_{i,2} \cup \cdots \cup V_{i,s}$ for each $i \in [r]$ so 
that $(V_{i,j}, V_{i',j'})$ forms an $\varepsilon$-regular pair 
whenever $(V_{i}, V_{i'})$ forms an $\varepsilon_0$-regular pair.
Thus each $V_{i,j}$ forms an $\varepsilon$-regular pair with at least
$(1-\varepsilon_0)rs$ other sets $V_{i',j'}$.
Arbitrarily remove $rs - k \le r-1$ parts $(i,j) \in [r] \times [s]$, combine the removed sets with $V_0$
and re-label the sets so that we obtain a partition
$U_0 \cup U_1 \cup \cdots \cup U_k$ of the vertex set. 
For each $i \in [k]$, there are at most $\varepsilon_0 rs \le \frac{\varepsilon}{2} rs \le \varepsilon k$
other indices $i' \in [k]$ for which $(U_i, U_{i'})$ is not $\varepsilon$-regular. 
Moreover, $|U_0| \le \varepsilon_0 n + (s-1)r + (r - 1)\frac{n}{sr} \le \varepsilon n$ and therefore
we found a partition with the desired properties.
\end{proof}

The blow-up lemma, developed by Koml\'os, S\'ark\"ozy, and Szemer\'edi \cite{KoSaSz},
is a powerful tool used in embedding large subgraphs. Informally, quoting
Koml\'os, S\'ark\"ozy, and Szemer\'edi, it asserts that, ``regular pairs behave
like complete bipartite graphs from the point of view of bounded degree subgraphs.''.
We use the following version of the blow-up lemma.

\begin{lem} \label{lem:blowup}
For all $\xi, \delta, \Delta$, there exists $\varepsilon = \varepsilon(\xi, \delta, \Delta)$ 
such that the following holds for all natural numbers $k$ 
if $m \ge m_0$ for some sufficiently large $m_0 = m_0(k,\varepsilon)$.
Let $G$ be a graph with maximum degree at most $\Delta$.
Let $\Gamma$ be a graph with a vertex partition $\{V_i\}_{i=1}^{k}$ satisfying $|V_i| \ge (1+\xi)m$ for all $i \in [k]$, and let $R$ be its $(\varepsilon, \delta)$-reduced graph.
Suppose that there exists a homomorphism $f$ from $G$ to $R$ where $|f^{-1}(i)| \le m$ for all $i \in [k]$.
Then there exists an embedding of $G$ to $\Gamma$.
\end{lem}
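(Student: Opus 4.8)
The plan is to adapt the randomized embedding strategy behind the Koml\'os--S\'ark\"ozy--Szemer\'edi blow-up lemma \cite{KoSaSz}, using the slack $|V_i|\ge(1+\xi)m$ in place of super-regularity. This substitution is forced on us: $R$ may have unbounded maximum degree --- even restricting to the pairs actually used by $G$, the relevant degree of a cluster can be as large as $\Delta m$ --- so one cannot clean the clusters to make all the relevant pairs $(V_i,V_j)$ super-regular without deleting far more than $\xi m$ vertices from some $V_i$. Instead we use super-regularity only \emph{locally}: when we place a vertex $v$ we merely avoid the ``bad'' vertices attached to the at most $\Delta$ neighbours of $v$, and since each such bad set has size at most $\varepsilon|V_{f(v)}|$ while $\Delta\varepsilon|V_{f(v)}|\ll\xi m$, the slack always leaves room. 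Fix $c_0=(\delta/2)^{\Delta}$, choose $\varepsilon=\varepsilon(\xi,\delta,\Delta)$ small enough (say $\varepsilon\le c_0^{2}$ and $\varepsilon\le c_0\xi/(100\Delta)$), and assume $m$ is large.

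\emph{Setup.} Write $W_i=f^{-1}(i)$, so that $|W_i|\le m$ and the parts $W_i$ partition $V(G)$; we will embed $W_i$ into $V_i$. Reserve a \emph{buffer} $B\subseteq V(G)$ that is independent in $G$ --- so that embedding a buffer vertex never constrains any other unembedded vertex --- with $|B\cap W_i|\le\mu m$ for each $i$, where $\mu=c_0\xi/4$; such a $B$ exists since $G$ has maximum degree at most $\Delta$. Fix an ordering of $V(G)\setminus B$ (roughly a breadth-first ordering inside each component), embed $V(G)\setminus B$ in this order, and embed $B$ last.

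\emph{Main phase.} For each not-yet-embedded $v\in W_i$ keep a candidate set $C(v)\subseteq V_i$, equal to $V_i$ at the start and updated to $C(v)\cap N_{\Gamma}(x)$ whenever a neighbour of $v$ is embedded to $x$; eventually $v$ must be placed in $C(v)$. To place $v\in W_i$, choose its image uniformly at random among the vertices of $C(v)$ that are still unused and are ``typical'' with respect to every unembedded neighbour $w$ of $v$, meaning they have at least $(\delta-\varepsilon)|C(w)|$ neighbours in $C(w)$. By $\varepsilon$-regularity of $(V_i,V_{f(w)})$ at most $\varepsilon|V_i|$ vertices fail to be typical for a fixed $w$, hence at most $\Delta\varepsilon|V_i|$ in total, and typicality forces each affected candidate set to shrink only by a factor $\ge\delta-\varepsilon$. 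Since each $C(v)$ suffers at most $\deg_G(v)\le\Delta$ such shrinkings, $|C(v)|\ge(\delta-\varepsilon)^{\Delta}|V_i|\ge c_0|V_i|$ throughout. Moreover --- and this is the crux --- a martingale concentration argument together with a union bound shows that with positive probability the candidate sets inside each cluster stay not merely large but \emph{regularly distributed}: for any small family of unembedded vertices in one cluster, the union of their candidate sets still meets the unused part of that cluster in a $\ge c_0$-proportion. Because $|W_i|\le m\le|V_i|-\xi m$, there are always at least $\xi m$ unused vertices in $V_i$, so this regular-distribution property (used through Hall's theorem) keeps a legal random choice available even when a cluster is nearly full.

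\emph{Endgame, and the main obstacle.} After $V(G)\setminus B$ is embedded, every cluster has at least $\xi m$ unused vertices, and every remaining vertex $v\in B\cap W_i$ --- whose neighbours, lying outside $B$, are all already embedded --- has at least $c_0$ of the unused vertices of $V_i$ in $C(v)$, which is at least $c_0\xi m\ge\mu m\ge|B\cap W_i|$. As $B$ is independent the clusters decouple, and within each one Hall's condition holds (any subset of $B\cap W_i$ sees, among the unused vertices, at least as many vertices as itself), so a matching places $B$ and completes the embedding. The main obstacle is the main phase: one must show that the candidate sets remain simultaneously large and regularly distributed as the clusters fill toward $m$, even though a single candidate set occupies only a $c_0$-fraction of its cluster --- a proportion much smaller than the fraction of vertices that will be used up. Establishing and maintaining this regular-distribution invariant under the random placements is precisely the technical heart of the blow-up lemma; it goes through along the lines of Koml\'os--S\'ark\"ozy--Szemer\'edi, with the simplification that the slack $\xi m$ makes any global super-regularisation of $\Gamma$ unnecessary.
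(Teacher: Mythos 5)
Your proposal correctly identifies the central structural point --- $\varepsilon$ must not depend on $R$, so the usual super-regularization of the clusters is unavailable, and the slack $|V_i| \ge (1+\xi)m$ must substitute for it --- and, like the paper, you frame the argument as an adaptation of the Koml\'os--S\'ark\"ozy--Szemer\'edi random greedy embedding. But the route you then take differs from the paper's in a way that leaves the real work undone rather than discharged.

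The paper's key simplification is that, because the image occupies only a $\tfrac{1}{1+\xi}$ fraction of each cluster, the embedding is merely almost-spanning inside $\Gamma$ and the entire ``Phase~2'' of KSS (the Hall's-theorem endgame) can be dropped outright. You instead reintroduce a buffer $B$ and a matching endgame, which is at best a vestigial complication. More significantly, the paper embeds \emph{cluster by cluster}: first all of $f^{-1}(1)$ into $V_1$, then $f^{-1}(2)$, and so on. Because each $f^{-1}(i)$ is independent in $G$, the candidate sets of vertices in $f^{-1}(i)$ are frozen for the entirety of phase~$i$; the only dynamic is how the image consumes those fixed sets. This is controlled by a clean per-cluster property $\mathcal{P}(i)$ (no medium-sized subset of $V_i$ nearly covers many candidate sets), with the few ``stuck'' vertices routed through a FIFO queue serviced quickly enough that no candidate set is exhausted, and the concentration statement (Lemma~\ref{lem:prob}) is a self-contained one-cluster estimate conditioned on the previous clusters. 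Your breadth-first ordering interleaves clusters, so candidate sets evolve in two coupled ways at once (shrinking via newly placed neighbours \emph{and} being eaten by same-cluster placements), and your ``regular distribution'' invariant --- stated only as ``the candidate sets meet the unused part of that cluster in a $\ge c_0$-proportion'' and maintained by an unspecified ``martingale concentration argument together with a union bound'' --- is exactly where the proof's difficulty is concentrated. You do not say what the invariant is precisely, why a single random step preserves it, or how the union bound over all steps and all clusters stays below $1$; and it does not simply go ``along the lines of KSS,'' since KSS maintain their invariant via super-regularity, which you have yourself observed is absent here. The cluster-by-cluster ordering, the property $\mathcal{P}(i)$, and the queue are precisely the machinery the paper builds to replace super-regularity in that step; in a complete write-up you would either have to build an equivalent or adopt theirs.
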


Lemma~\ref{lem:blowup} differs from the original 
version of the blow-up lemma in that the restriction on $\varepsilon$ does not 
depend on $R$. It is a subtle but crucial difference.
The proof of Lemma~\ref{lem:blowup} is rather technical 
and hence to avoid unnecessary distraction, we provide it in Section~\ref{sec:blowup}.

Theorem~\ref{thm:transference} follows from Lemmas~\ref{lem:regularity_fixednumber}
and \ref{lem:blowup}. We re-state the theorem here.

\begin{thm*} 
For all $\Delta, \xi$ and $\varepsilon$, there exists $\beta$ and $n_0$ such that the following holds for all $n \ge n_0$.
Let $G$ and $H$ be graphs where $G$ has $n$ vertices and maximum degree at most $\Delta$.
Suppose that there exists a homomorphism $f$ from $G$ to $H$ for which $|f^{-1}(v)| \le \beta n$
for all $v \in V(H)$.
Then for the weight-function of $H$ defined by $w(v) = \frac{1}{\beta n}|f^{-1}(v)|$ we have
$r(G) \le (1+\xi)\hat{r}_\varepsilon(H,w) \cdot \beta n$.
\end{thm*}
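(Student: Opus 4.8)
The plan is to establish the equivalent statement $r(G)\le N$ with $N:=(1+\xi)\hat r_\varepsilon(H,w)\,\beta n$, i.e.\ to show that an arbitrary red/blue colouring of $K_N$ contains a monochromatic copy of $G$, by running the regularity method and handing the problem off to Lemmas~\ref{lem:regularity_fixednumber} and~\ref{lem:blowup}. Write $\rho:=\hat r_\varepsilon(H,w)$; if $\rho=\infty$ there is nothing to prove, so assume $\rho$ finite, and, discarding vertices of $H$ outside the image of $f$, assume $f$ surjective, so that $H$ has at most $|E(G)|\le\Delta n/2$ edges and hence (by the known subexponential Ramsey bound for sparse graphs) $\rho\le r(H)=2^{o_\Delta(n)}$. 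Also, since $w(V(H))=1/\beta$ and every vertex of a host graph can carry $w$-weight at most $1$, any host admitting the homomorphism in the definition of $\hat r_\varepsilon(H,w)$ must have at least $1/\beta$ vertices, so $\rho\ge 1/\beta$. First I would fix the constants in dependency order: $\delta:=\tfrac12$, $\xi':=\tfrac\xi3$, $\varepsilon_0:=\varepsilon_{\ref{lem:blowup}}(\xi',\delta,\Delta)$, $\varepsilon':=\min\{\varepsilon_0,\varepsilon,\tfrac{\xi}{3(1+\xi)}\}$, $\beta:=1/T_{\ref{lem:regularity_fixednumber}}(\varepsilon')$, and $n_0$ large in terms of $\Delta,\xi,\varepsilon$. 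In particular $\rho\ge 1/\beta=T_{\ref{lem:regularity_fixednumber}}(\varepsilon')$, which is exactly what will license a regular partition into $\rho$ parts.

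Given a colouring of $K_N$ with red and blue subgraphs $\Gamma_r,\Gamma_b$, I would apply Lemma~\ref{lem:regularity_fixednumber} to $\Gamma_r$ with parameter $\varepsilon'$ and $k=\rho$ parts; this is legitimate for $n\ge n_0$, since $\rho\ge T_{\ref{lem:regularity_fixednumber}}(\varepsilon')$ and, because $\rho$ is subexponential in $n$ while $N=\Theta(\rho\beta n)$, also $N\ge(n_0)_{\ref{lem:regularity_fixednumber}}(\varepsilon',\rho)$. This produces a partition $V_0\cup V_1\cup\cdots\cup V_\rho$ with $|V_0|\le\varepsilon'N$ and equal parts $|V_i|\ge(1-\varepsilon')(1+\xi)\beta n\ge(1+\xi')\beta n$ for $i\ge1$ (the last inequality by the choice of $\varepsilon',\xi'$, after absorbing a rounding loss into $n_0$), whose reduced graph $R$ on $[\rho]$ has minimum degree at least $(1-\varepsilon')\rho\ge(1-\varepsilon)\rho$. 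Since $\varepsilon$-regularity is invariant under complementation inside $K_N$, every pair that is $\varepsilon'$-regular for $\Gamma_r$ is $\varepsilon'$-regular for $\Gamma_b$, and one of its two densities is at least $\tfrac12=\delta$; I would colour that edge of $R$ red in the first case and blue otherwise. As $\varepsilon'\le\varepsilon_0$, the red edges of $R$ then all lie in the $(\varepsilon_0,\delta)$-reduced graph of $\Gamma_r$ with respect to $\{V_i\}$, and the blue edges in that of $\Gamma_b$.

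Now $R$ is a graph on exactly $\rho=\hat r_\varepsilon(H,w)$ vertices with minimum degree at least $(1-\varepsilon)\rho$, carrying a $2$-colouring of its edges, so by the very definition of $\hat r_\varepsilon$ there is a homomorphism $\psi$ from $H$ into the red graph of $R$, or into the blue graph of $R$, with $w(\psi^{-1}(i))\le1$ for all $i$; say into the red graph, the other case being symmetric. Then $\phi:=\psi\circ f$ is a homomorphism from $G$ into the $(\varepsilon_0,\delta)$-reduced graph of $\Gamma_r$, and for each $i$ one has $|\phi^{-1}(i)|=\sum_{v\in\psi^{-1}(i)}|f^{-1}(v)|=\beta n\,w(\psi^{-1}(i))\le\beta n$. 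I would then apply Lemma~\ref{lem:blowup} with parameters $\Delta,\delta,\xi'$ (so its regularity constant is $\varepsilon_0\ge\varepsilon'$), with $k=\rho$ and $m=\beta n$ (so that $|V_i|\ge(1+\xi')m$, and $m\ge(m_0)_{\ref{lem:blowup}}(\rho,\varepsilon_0)$ for $n\ge n_0$, again using that $\rho$ is subexponential in $n$), with host $\Gamma_r$, partition $\{V_i\}_{i=1}^\rho$, and homomorphism $\phi$. The conclusion is an embedding of $G$ into $\Gamma_r$, i.e.\ a monochromatic copy of $G$ in $K_N$. As the colouring was arbitrary, $r(G)\le N=(1+\xi)\hat r_\varepsilon(H,w)\,\beta n$.

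I expect the hard part to be Lemma~\ref{lem:blowup} itself (deferred to Section~\ref{sec:blowup}), not this assembly: it is applied above to a reduced graph $R$ on $\rho=\hat r_\varepsilon(H,w)$ parts, a quantity that cannot be bounded in the fixed parameters $\Delta,\xi,\varepsilon$, so the classical blow-up lemma — whose regularity requirement degrades with the number of parts of the reduced graph, and would therefore force $\beta\to0$ as $\rho\to\infty$ — is of no use here; the feature of the variant that its $\varepsilon_0$ depends only on $\xi',\delta,\Delta$ is precisely what makes the argument close. The one remaining delicacy, needed to keep $\beta$ and $n_0$ uniform in $H$, is the bound $\rho\le r(H)=2^{o_\Delta(n)}$ obtained from $|E(H)|\le\Delta n/2$: it guarantees that both the prescribed-parts regularity lemma and Lemma~\ref{lem:blowup} can be run with $\rho$ parts once $K_N$ already has $\Theta(\rho n)$ vertices and $n\ge n_0(\Delta,\xi,\varepsilon)$.
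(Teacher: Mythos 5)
Your overall template — apply the prescribed-parts regularity lemma with $k=\hat r_\varepsilon(H,w)$ parts, majority-colour the $\varepsilon$-reduced graph $R$, use the definition of $\hat r_\varepsilon$ to obtain a homomorphism $\psi$ of $H$ into one colour class of $R$, compose $\phi=\psi\circ f$, and finish with the variant blow-up lemma applied with $m=\beta n$ — is the paper's, and that part of the assembly is sound, as is your observation that the $R$-independence of $\varepsilon_{\ref{lem:blowup}}$ is what keeps the dependency ordering of the constants from becoming circular.

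The gap is in how you justify running Lemmas~\ref{lem:regularity_fixednumber} and~\ref{lem:blowup} with $\rho:=\hat r_\varepsilon(H,w)$ parts while keeping $\beta$ and $n_0$ a function of $\Delta,\xi,\varepsilon$ alone. You assert $\rho\le r(H)=2^{o_\Delta(n)}$. The first inequality is false in general: $\hat r_\varepsilon$ is nondecreasing in $\varepsilon$ and coincides with $\hat r(H,w)\le r(H)$ only when $\varepsilon<\frac{1}{\hat r(H,w)-1}$; for larger $\varepsilon$ it can exceed $r(H)$ by any factor, and it diverges as $\varepsilon\uparrow\frac{1}{r(K_{\chi(H)})-1}$ even though $r(H)$ stays finite. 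Moreover, even a correct subexponential bound would not close the argument: the thresholds $(n_0)_{\ref{lem:regularity_fixednumber}}(\varepsilon',\rho)$ and $(m_0)_{\ref{lem:blowup}}(\rho,\varepsilon_0)$ have unquantified growth in the number of parts $\rho$, so ``$N=\Theta(\rho\beta n)$ eventually dominates them'' is not a licensed step, and in any case $n_0$ must be fixed once for the whole class of admissible $(G,H,f)$ rather than chosen after seeing $\rho$. The missing ingredient is the paper's reduction via the Conlon--Fox--Sudakov theorem: since $r(G)\le c^{\Delta\log\Delta}n$ for every $n$-vertex $G$ of maximum degree at most $\Delta$, the target inequality $(1+\xi)\rho\beta n\ge r(G)$ holds trivially as soon as $\rho\ge\beta^{-1}c^{\Delta\log\Delta}$. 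Combined with your (correct) lower bound $\rho\ge w(V(H))=\beta^{-1}$, one may therefore assume $\beta^{-1}\le\rho<\beta^{-1}c^{\Delta\log\Delta}$, a quantity depending only on $\Delta$ and $\varepsilon$ (through $\beta$). With $\rho$ capped in this way, $n_0$ can be fixed uniformly — the paper takes $n_0=\max\bigl\{(n_0)_{\ref{lem:regularity_fixednumber}}(\varepsilon,\beta^{-1}c^{\Delta\log\Delta}),(m_0)_{\ref{lem:blowup}}(k,\varepsilon)\bigr\}$ — and the remainder of your write-up then goes through essentially verbatim.
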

\begin{proof}
By reducing $\varepsilon$ if necessary, 
we may assume that 
$\varepsilon \le \min\left\{\varepsilon_{\ref{lem:blowup}}(\frac{\xi}{2}, \frac{1}{2}, \Delta), \frac{\xi}{2(1+\xi)}\right\}$.
By the theorem of Conlon, Fox, and Sudakov mentioned in the introduction,
there exists a constant $c$ for which $r(G) \le c^{\Delta \log \Delta}n$. 
Define $\beta = T_{\ref{lem:regularity_fixednumber}}(\varepsilon)^{-1}$ and $k = \hat{r}_\varepsilon(H)$.
Define $n_0 = \max \{(n_0)_{\ref{lem:regularity_fixednumber}}(\varepsilon, \beta^{-1}c^{\Delta \log \Delta}),
(m_0)_{\ref{lem:blowup}}(k, \varepsilon) \}$.
By definition we have 
$k = \hat{r}_{\varepsilon}(H) \ge w(V(H)) = \frac{1}{\beta n} |V(G)| = \beta^{-1}$.
Furthermore since $r(G) \le c^{\Delta \log \Delta}n$, the conclusion holds if 
$\hat{r}_\varepsilon(H) \ge \beta^{-1}c^{\Delta \log \Delta}$. 
Thus we may assume that $k = \hat{r}_\varepsilon(H) < \beta^{-1}c^{\Delta \log \Delta}$. 
Thus $\beta^{-1} \le k < \beta^{-1} c^{\Delta \log \Delta}$.

Let $N = (1+\xi)\hat{r}_\varepsilon(H) \cdot \beta n$.
Suppose that we are given a red/blue coloring of $K_N$.
Let $\Gamma_r$ and $\Gamma_b$ be the red graph and blue graph, respectively.
By Theorem \ref{thm:regularity}, there exists an $\varepsilon$-regular partition 
$V_0 \cup V_1 \cup \cdots \cup V_k$ of $\Gamma_r$ (note that it also is an
$\varepsilon$-regular partition of $\Gamma_b$).
Consider the $\varepsilon$-reduced graph $R$ of the partition, and color the edges with
red and blue so that an edge $\{i,j\}$ is red if the red edge density of the
pair $(V_i, V_j)$ is at least $\frac{1}{2}$ and blue otherwise.
Since $R$ has minimum degree at least $(1-\varepsilon)k$, by the definition 
of $\hat{r}_{\varepsilon}(H)$, there exists a homomorphism $g$ from $H$
to the red subgraph of $R$ (or the blue subgraph of $R$) such that $|g^{-1}(i)| \le 1$, 
and thus $|f^{-1}(g^{-1}(i))| \le \beta n$
for all $i \in [k]$. Without loss of generality, assume that it is to the red subgraph of $R$.
Note that $h := g \circ f$ is a homomorphism from $G$ to the red subgraph of $R$
satisfying $|h^{-1}(i)| \le \beta n$ for all $i \in [k]$. 
Further note that for each $i \in [k]$, 
we have $|V_i| \ge \frac{1-\varepsilon}{k}N \ge (1+\frac{\xi}{2})n$.
Therefore by Lemma~\ref{lem:blowup}, we can find a copy of $G$ in $\Gamma_r$.
\end{proof}

\section{Weighted Ramsey number}
\label{sec:weighted_ramsey}

\subsection{Wheel graph}

Before proving Theorem~\ref{thm:weight_cfs}, we first show that the weighted Ramsey number of
wheel graphs is small as claimed in the introduction without proof.
Recall that a wheel graph $W_{k}$ is a graph with $k$ vertices consisting of a 
cycle on $k-1$ vertices and a vertex adjacent to all vertices on the cycle. 
Let $w : V(W_k) \rightarrow [0,1]$ be a weight function and define $m = w(V(W_k))$ as the total weight.
Let $C$ be the cycle obtained from $W_k$ by removing the vertex of degree $k-1$.
By restricting the domain of $w$, we may assume that $(C,w)$ is a weighted graph. 
Since a cycle has maximum degree 2, 
Theorem~\ref{thm:weight_cfs} that we will prove in the next subsection
implies that there exist constants $\varepsilon$ and $c$ such that
$\hat{r}_{\varepsilon}(C, w) \le c \cdot w(V(C))$ (where $c$ is a constant
not depending on the order of $C$).
By increasing $c$ and decreasing $\varepsilon$ if necessary, we may assume that 
$c \ge 2$ and $\varepsilon < \frac{c}{8}$.

For simplicity, we assume that $cm$ is an integer. 
For $N = 8c m$, consider a red/blue edge coloring of $\Gamma$, where
$\Gamma$ is a graph with $N$ vertices and minimum degree at least $(1-\frac{\varepsilon}{8})N$. 
Without loss of generality, 
we may assume that there exists a vertex $v_1$ of red degree at least $\lceil \frac{N-1}{2} \rceil \ge 4c m$. 
Let $X$ be an arbitrary set of red neighbors of $v_1$ of size exactly $4c m$
and let $\Gamma_1$ be the graph induced on $X$.
If there exists a vertex $v_2$ of blue degree at least $c m$ in $\Gamma_1$, 
then let $Y$ be an arbitrary set of blue neighbors of $v_2$ in $\Gamma_1$ of size exactly $cm$
and let $\Gamma_2$ be the subgraph of $\Gamma_1$ induced on $Y$.
Note that $\Gamma_2$ has minimum degree at least $cm - \frac{\varepsilon}{8} N = (1-\varepsilon)cm$.
Therefore, we can find a monochromatic copy of $(C, w)$ in $\Gamma_2$. 
If it is red, then together with $v_1$, it forms a monochromatic copy of $(W_k, w)$, and 
if it is blue, then together with $v_2$, it forms a monochromatic copy of $(W_k, w)$.

Hence we may assume that all vertices of $\Gamma_1$ has blue degree at most $cm - 1$ in $\Gamma_1$. 
Since $\Gamma$ has minimum degree at least $(1-\varepsilon)N$, it follows that
$\Gamma_1$ has minimum degree at least $4cm - \varepsilon N$.
Therefore $\Gamma_1$ has minimum red degree at least $3cm - \varepsilon N$. 
Let the vertices of $C$ be $x_1, x_2, \cdots, x_{k-1}$ in decreasing order of weight 
(where ties are broken arbitrarily). 
We will greedily embed the vertices of $C$ according to this order.
Suppose that we finished embedding $x_1, \cdots, x_{i-1}$ and let $\phi$ denote the partial embedding. 
Note that $x_i$ has at most two neighbors in $x_1, \cdots, x_{i-1}$. 
Suppose that it has two neighbors, and let $v, v'$ be the images of these vertices in $\Gamma_1$. 
Let $R$ be the set of common red neighbors of $v$ and $v'$.
By the minimum degree condition of $\Gamma_1$, we know that $|R| \ge 2cm - 2\varepsilon N$.
If there exists a vertex $v'' \in R$ such that $w(\phi^{-1}(v'')) + w(x_i) \le 1$, 
then we define $\phi(x_i) = v''$. Otherwise, we have $w(\phi^{-1}(v'')) > 1 - w(x_i) \ge 0$
for all $v'' \in R$. 
If $w(x_i) \le \frac{1}{2}$, then it implies that $w(\phi^{-1}(v'')) > \frac{1}{2}$
for all $v'' \in R$. On the other hand, if $w(x_i) > \frac{1}{2}$, then 
we have $w(x_j) > \frac{1}{2}$ for all $j < i$. Therefore $w(\phi^{-1}(v'')) > 0$ implies that
$\phi^{-1}(v'') \neq \emptyset$, and thus $w(\phi^{-1}(v'')) > \frac{1}{2}$ for all $v'' \in R$. 
In both cases, we have $w(\phi^{-1}(v'')) > \frac{1}{2}$ for all $v'' \in R$. 
Hence $w(\phi^{-1}(R)) > \frac{1}{2} |R| \ge cm - \varepsilon N > m$ which contradicts the 
fact that $m = w(V(W_k))$. Therefore we can define $\phi(x_i)$ as above and continue the process.
The other case when there are less than two neighbors of $x_i$ in $x_1, \cdots, x_{i-1}$ can
be similarly handled.

\subsection{Bounded degree graphs}

In this section, we adapt the proof of Conlon, Fox, and Sudakov \cite{CoFoSu} to prove Theorem~\ref{thm:weight_cfs}.
We say that a graph $\Gamma$ is {\em bi-$(\varepsilon,\delta)$-dense} if 
for all disjoint pairs of vertex subsets $X, Y \subseteq V(\Gamma)$ of sizes at least
$|X|, |Y| \ge \varepsilon|V(\Gamma)|$, we have $d(X,Y) \ge \delta$.
The following definition is essentially from \cite{CoFoSu} (we added an additional parameter $\delta$).

\begin{dfn}
A graph $\Gamma$ on $N$ vertices is $(\alpha, \beta, \rho, \delta, \Delta)$-dense if there 
is a sequence $U_1, U_2, \cdots, U_s$ of disjoint vertex subsets each of cardinality
at least $\alpha N$ and non-negative integers $d_1, \ldots, d_s$ such that
$d_1 + \cdots + d_s = \Delta - s + 1$, and the following holds:
\begin{itemize}
  \setlength{\itemsep}{1pt} \setlength{\parskip}{0pt}
  \setlength{\parsep}{0pt}
\item[(i)] For all $i \in [s]$, the induced subgraph $\Gamma[U_i]$ is bi-$(\rho^{2d_i}, \delta)$-dense, and
\item[(ii)] for $1 \le i < j \le s$, each vertex in $U_i$ has at least $(1-\beta)|U_j|$ neighbors in $U_j$.
\end{itemize}
\end{dfn}

Note that monotonicity holds in a sense that if a graph is $(\alpha', \beta', \rho', \delta', \Delta')$-dense
and $\alpha' \ge \alpha, \beta' \le \beta, \rho' \le \rho, \delta' \ge \delta, \Delta' \ge \Delta$, then it is also $(\alpha, \beta, \rho, \delta, \Delta)$-dense.
The following lemma was proved in \cite[Lemma 2.2]{CoFoSu}.

\begin{lem} \label{lem:cfs}
Let $D = 2^{h} - 1$ for a non-negative integer $h$, 
and $\rho$ be a fixed real number. 
For all $N \ge 1$, every edge-coloring of $K_N$ with two colors red and blue,
the red graph or the blue graph is $(2^{-2h}\rho^{6D-4h}, 2(D+1)\rho, \rho, \rho, D)$-dense.
\end{lem}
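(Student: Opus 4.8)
The plan is to prove Lemma~\ref{lem:cfs} by induction on $h$, along the lines of the argument of Conlon, Fox, and Sudakov. For the base case $h=0$ we have $D=0$, and the claim reduces to the assertion that (say) the red graph is $(1,2\rho,\rho,\rho,0)$-dense. This holds with $s=1$, $U_1=V(K_N)$, $d_1=0$: then $\sum_i d_i = 0 = \Delta-s+1$, condition (ii) is empty, and condition (i) only asks that $\Gamma[U_1]$ be bi-$(\rho^{0},\rho)=$ bi-$(1,\rho)$-dense, which is vacuous since $K_N$ contains no two disjoint subsets of size $\ge N$.

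For the inductive step write $D'=2^{h-1}-1$, so that $D=2D'+1$, fix a $2$-coloring of $K_N$, and set $\alpha=2^{-2h}\rho^{6D-4h}$ and $\beta=2(D+1)\rho$. Begin with a dichotomy. If the red graph is bi-$(\rho^{2D},\rho)$-dense on all of $V(K_N)$, we are done immediately: take $s=1$, $U_1=V(K_N)$, $d_1=D$; then $\sum_i d_i=D=\Delta-s+1$, (ii) is empty, (i) is exactly the hypothesis, and $|U_1|=N\ge\alpha N$. Otherwise there are disjoint $X,Y$ with $|X|,|Y|\ge\rho^{2D}N$ and $d_{\mathrm{red}}(X,Y)<\rho$, hence $d_{\mathrm{blue}}(X,Y)>1-\rho$. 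The plan now is to manufacture a \emph{blue} structure of budget $D$ by gluing a blue structure of budget $D'$ living inside $X$ to a blue structure of budget $D'$ living inside $Y$, using the blue edges between $X$ and $Y$ for the cross conditions.

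The key preparatory move is that we cannot feed $X$ and $Y$ to the induction hypothesis directly: the density between them is only $1-\rho$ \emph{on average}, whereas the gluing will need \emph{every} vertex of a block to see almost all of every later block, and the blocks returned by the recursion may be only an $\alpha_1$-fraction of their side, where $\alpha_1=2^{-2(h-1)}\rho^{6D'-4(h-1)}$. So I would first pass to subsets $X^{*}\subseteq X$ and $Y^{*}\subseteq Y$ --- obtained by repeatedly intersecting with common blue-neighborhoods of small random sets, in the style of dependent random choice --- losing at most a $\rho^{\Theta(D)}$ factor in size but arranging that the blue bipartite graph between $X^{*}$ and $Y^{*}$ is robustly almost-complete: for every $T\subseteq Y^{*}$ with $|T|\ge\alpha_1|Y^{*}|$, every vertex of $X^{*}$ has at least $(1-\beta)|T|$ blue neighbors in $T$, and symmetrically. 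Then apply the induction hypothesis (parameter $h-1$) to the colorings induced on $X^{*}$ and on $Y^{*}$, each returning a red-or-blue structure of budget $D'$ with parameters $(\alpha_1,\beta_1,\rho,\rho,D')$, $\beta_1=2(D'+1)\rho$. In the favourable case both are blue, say with block sequences $S_1,\dots,S_p$ in $X^{*}$ and $T_1,\dots,T_q$ in $Y^{*}$ (exponents $a_i$, $b_j$); concatenate to $U_1,\dots,U_{p+q}=S_1,\dots,S_p,T_1,\dots,T_q$ with exponents $a_1,\dots,a_p,b_1,\dots,b_q$. Then $s=p+q$ and $\sum(\text{exponents})=(D'-p+1)+(D'-q+1)=D-s+1$; (i) is inherited block by block; same-side instances of (ii) are inherited (using $\beta_1\le\beta$, valid since $D'<D$); and cross instances of (ii) are exactly what the robust blue-almost-completeness between $X^{*}$ and $Y^{*}$ supplies. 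One then checks the numerical constraints relating $\alpha,\alpha_1,\rho^{2D}$ and $\beta,\beta_1$; this is where the factor $2^{-2h}$, the exponent $6D-4h$, and the ``one $\rho$ of slack per level'' in $\beta=2(D+1)\rho$ come from. The remaining (off-diagonal) case, where the two recursive calls return different colors, must be dealt with by a separate case analysis --- essentially re-running the dichotomy inside the offending side --- while keeping the set sizes under control.

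The step I expect to be the main obstacle is precisely the propagation of bipartite density: extracting the robustly-almost-complete pair $X^{*},Y^{*}$ from a pair of mere average density $1-\rho$, and, hand in hand with it, balancing the size losses so that the blocks $U_i$ still have size $\ge\alpha N$ after both the dependent-random-choice boost and the recursion. Since this boost is applied once at each of the $h$ levels of the induction, it is what drives $\alpha$ down to being doubly-exponentially small in $h$; for $h=0,1$ it degenerates to the trivial observation above or a single round of cleaning, but for larger $h$ it is unavoidable, since a purely averaging-based argument cannot certify condition (ii) against the small blocks produced deeper in the recursion.
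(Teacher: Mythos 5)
The statement you are trying to prove is not actually proved in this paper: the author cites it verbatim as Lemma 2.2 of Conlon, Fox, and Sudakov (\cite{CoFoSu}), so there is no in-paper proof to compare against. I can, however, flag where your proposal has genuine gaps as a freestanding argument.

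Your overall shape (induction on $h$, dichotomy between ``bi-$(\rho^{2D},\rho)$-dense'' and ``find a red-sparse pair $(X,Y)$ and switch to blue,'' then glue two budget-$D'$ structures from the two sides) is the right one, and your accounting of $s$ and $\sum d_i$ in the glued structure is correct. But the intermediate claim on which you hinge the gluing is false as stated. You want $X^{*}\subseteq X$, $Y^{*}\subseteq Y$, of not-too-small relative size, such that \emph{every} $x\in X^{*}$ has at most $\beta\alpha_1|Y^{*}|$ red neighbours in $Y^{*}$ (so that the blue degree into \emph{any} $T\subseteq Y^{*}$ with $|T|\ge\alpha_1|Y^{*}|$ is at least $(1-\beta)|T|$), and symmetrically. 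Take the red bipartite graph between $X$ and $Y$ to be a quasirandom bipartite graph of density $\rho/2$. Then every vertex of every $X^{*}$ of positive relative size has about $\tfrac{\rho}{2}|Y^{*}|$ red neighbours in any $Y^{*}$ of positive relative size; since $\beta\alpha_1=2(D+1)\rho\cdot\alpha_1\ll\rho$ already for $h=2$ (where $\beta\alpha_1\approx 2\rho^3$), no amount of dependent random choice can produce the robustly almost-complete pair you describe. This is not a fatal obstruction to the lemma itself --- in the quasirandom example the red edges are spread out, so any \emph{specific} block $T$ receives only about $\tfrac{\rho}{2}|T|<\beta|T|$ red neighbours from each $x$, and condition (ii) holds --- but it shows your cleaning target is strictly stronger than what is needed and is unattainable. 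The argument has to control red degrees into the \emph{actual} blocks produced by the recursion (or exploit some flexibility in their choice), not into all $\alpha_1$-proportion subsets.

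The second gap is the off-diagonal case. Your recursion returns an existential ``red-or-blue'' verdict on each of $X^{*}$ and $Y^{*}$, with no control over which; if $X^{*}$ comes back red while the cross edges to $Y^{*}$ are blue, the budget-$D'$ red structure in $X^{*}$ cannot be promoted to budget $D$ (knowing the red graph on $X^{*}$ is bi-$(\rho^{2D'},\rho)$-dense is weaker than bi-$(\rho^{2D},\rho)$-dense, and $\sum d_i$ is short by $D'+1$), and it cannot be glued to anything across the blue cut. ``Re-running the dichotomy inside the offending side'' is not a fix: re-running the same existential claim on $X^{*}$ can legitimately return red again, and iterating ``find a red-sparse sub-pair inside the offending side'' has no evident termination guarantee that keeps block sizes above $\alpha N$. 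The actual CoFoSu proof has to handle this; the mechanism is not in your sketch. As it stands, the proposal outlines the skeleton of a plausible argument but leaves open exactly the two steps that make this lemma non-trivial.
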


The `stable version' of the lemma above immediately follows for small values of $\varepsilon$.

\begin{lem} \label{lem:cfs_stable}
Let $D = 2^{h} - 1$ for a non-negative integer $h$,
and $\rho, \varepsilon$ be positive real numbers satisfying $\varepsilon < 2^{-2h-1}\rho^{8D-4h+1}$. 
If $\Gamma$ is a graph on $N$ vertices with minimum degree at least $(1-\varepsilon)N$,
then for every edge-coloring of $\Gamma$ with two colors red and blue,
the red graph or the blue graph is $(2^{-2h}\rho^{6D-4h}, 4(D+1)\rho, \rho, \frac{1}{2}\rho, D)$-dense.
\end{lem}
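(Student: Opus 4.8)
The plan is to derive Lemma~\ref{lem:cfs_stable} from Lemma~\ref{lem:cfs} by completing $\Gamma$ to a clique. We may assume $0 < \rho \le 1$. Given the red/blue colouring of $\Gamma$, extend it to a colouring of $K_N$ by colouring \emph{every} non-edge of $\Gamma$ red (fixing a single colour for all the new edges is what makes the argument go through). Apply Lemma~\ref{lem:cfs} to this colouring: writing $\alpha = 2^{-2h}\rho^{6D-4h}$, one of the two colour classes of $K_N$ is $(\alpha, 2(D+1)\rho, \rho, \rho, D)$-dense.

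If it is the blue class, we are done immediately: no blue edge was added, so the blue graph of $K_N$ coincides with the blue graph of $\Gamma$, and $(\alpha, 2(D+1)\rho, \rho, \rho, D)$-dense implies $(\alpha, 4(D+1)\rho, \rho, \tfrac12\rho, D)$-dense by monotonicity. So suppose instead that the red graph $R$ of $K_N$ is $(\alpha, 2(D+1)\rho, \rho, \rho, D)$-dense, witnessed by sets $U_1,\dots,U_s$ and integers $d_1,\dots,d_s$. The red graph $R'$ of $\Gamma$ is obtained from $R$ by deleting, at each vertex, at most $\varepsilon N$ edges (precisely the non-edges of $\Gamma$). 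I claim that $U_1,\dots,U_s$ and $d_1,\dots,d_s$ still witness that $R'$ is $(\alpha, 4(D+1)\rho, \rho, \tfrac12\rho, D)$-dense; the hypothesis $\varepsilon < 2^{-2h-1}\rho^{8D-4h+1}$ is exactly what is needed for this.

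For condition (i), let $X,Y \subseteq U_i$ be disjoint with $|X|,|Y| \ge \rho^{2d_i}|U_i|$. Since $d_i \le D$ and $|U_i| \ge \alpha N$, we get $|Y| \ge \rho^{2D}\alpha N = 2^{-2h}\rho^{8D-4h}N$, while at most $|X|\varepsilon N$ of the edges of $R$ between $X$ and $Y$ are missing in $R'$; hence $d_{R'}(X,Y) \ge d_R(X,Y) - \varepsilon N/|Y| \ge \rho - \tfrac12\rho$, so $R'[U_i]$ is bi-$(\rho^{2d_i},\tfrac12\rho)$-dense. For condition (ii), a vertex of $U_i$ loses at most $\varepsilon N$ of its neighbours in $U_j$; since $|U_j| \ge \alpha N$ and $\rho \le 1$, one checks $\varepsilon N \le 2(D+1)\rho\,|U_j|$, so the vertex retains at least $(1-4(D+1)\rho)|U_j|$ neighbours in $U_j$. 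This completes the plan. The argument is essentially bookkeeping; the only points requiring care are the asymmetry of the completion (only one colour class survives verbatim, so all of the loss must be absorbed into the other) and verifying that the single exponent $8D-4h+1$ appearing in the hypothesis simultaneously suffices for both of the inequalities above.
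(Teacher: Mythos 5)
Your proof is correct and follows the same route as the paper: the paper also extends the two-colouring of $\Gamma$ to $K_N$ by assigning all non-edges of $\Gamma$ to a single colour class (phrased there as a third colour green merged with red when invoking Lemma~\ref{lem:cfs}), applies Lemma~\ref{lem:cfs}, handles the blue case by monotonicity, and in the red case verifies that deleting at most $\varepsilon N$ edges per vertex preserves bi-density and the degree condition with the halved/doubled parameters. Your bookkeeping matches the paper's, including the two key estimates $\varepsilon N \le \tfrac12\rho|Y|$ (from $|Y|\ge 2^{-2h}\rho^{8D-4h}N$) and $\varepsilon \le \alpha\beta$.
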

\begin{proof}
Define $\alpha = 2^{-2h}\rho^{6D-4h}$ and $\beta = 2(D+1)\rho$.
Consider an edge-coloring of $K_N$ with three colors, where an edge has color red (blue) if 
it is an edge of color red (blue) in $\Gamma$, and has color green if it is not an edge in $\Gamma$.
Let $\Gamma_r, \Gamma_b, \Gamma_g$ be the graph consisting of red, blue, and green edges respectively.
By Lemma~\ref{lem:cfs}, either $\Gamma_r \cup \Gamma_g$ or $\Gamma_b$ is
$(\alpha, \beta, \rho, \rho, D)$-dense. If $\Gamma_b$ is
$(\alpha, \beta, \rho, \rho, D)$-dense, then the conclusion immediately follows
by monotonicity.

Hence we may assume that $\Gamma' = \Gamma_r \cup \Gamma_g$ is $(\alpha, \beta, \rho, \rho, D)$-dense.
By definition, there exists a sequence $U_1, U_2, \cdots, U_s$ of disjoint vertex subsets each of cardinality
at least $\alpha N$ and non-negative integers $d_1, \ldots, d_s$ such that
$d_1 + \cdots + d_s = D - s + 1$ where the following holds:
\begin{itemize}
  \setlength{\itemsep}{1pt} \setlength{\parskip}{0pt}
  \setlength{\parsep}{0pt}
\item[(i)] For all $i \in [s]$, the induced subgraph $\Gamma'[U_i]$ is bi-$(\rho^{2d_i}, \rho)$-dense, and
\item[(ii)] for $1 \le i < j \le s$, each vertex in $U_i$ has at least $(1-\beta)|U_j|$ neighbors in $U_j$ in $\Gamma'$.
\end{itemize}
We claim that $\Gamma_r[U_i]$ is bi-$(\rho^{2d_i}, \frac{1}{2}\rho)$-dense for all $i \in [s]$, and that
for $1 \le i < j \le s$, each vertex in $U_i$ has at least $(1-2\beta)|U_j|$ neighbors in $U_j$ in $\Gamma_r$.
Note that this proves the lemma. 
To prove the first part of the claim, fix an index $i \in [s]$ and
consider a pair of disjoint vertex subsets $X, Y \subseteq U_i$ of sizes
$|X|, |Y| \ge \rho^{2d_i} |U_i| \ge \rho^{2D}\alpha N = 2^{-2h} \rho^{8D-4h}$. 
By Property (i), we have $e_{\Gamma'}(X,Y) \ge \rho |X||Y|$.
Therefore since $\varepsilon N \le \frac{1}{2}\rho |Y|$,
\[
	e_{\Gamma_r}(X,Y) 
	\ge e_{\Gamma'}(X,Y) - |X| \cdot \varepsilon N
	\ge \rho |X||Y| - \frac{1}{2}\rho |X||Y| = \frac{1}{2}\rho |X||Y|.
\]
To prove the second part of the claim, fix two indices $i,j \in [s]$ satisfying $i < j$. 
By Property (ii), each vertex $u \in U_i$ has at least $(1-\beta)|U_j|$ neighbors in $U_j$ in $\Gamma'$.
Since $\varepsilon \le \alpha \beta$, we see that  $u$ has at least 
$(1-\beta)|U_j| - \varepsilon N \ge (1-2\beta)|U_j|$ neighbors in $U_j$, thus proving the claim.
\end{proof}

We also need the following theorem proved by Lov\'asz \cite{Lovasz}.

\begin{lem} \label{lem:Lovasz}
Let $G$ be a graph of maximum degree at most $\Delta$, and $d_1, \cdots, d_s$ be non-negative integers
satisfying $d_1 + \cdots + d_s \ge \Delta - t + 1$. Then there exists a vertex
partition $V(G) = V_1 \cup \cdots \cup V_s$ such that for all $i \in [s]$, the induced subgraph
$G[V_i]$ has maximum degree at most $d_i$.
\end{lem}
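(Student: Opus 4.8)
The plan is to follow Lov\'asz's original extremal argument. Since there are only finitely many ordered partitions $V(G) = V_1 \cup \cdots \cup V_s$ into $s$ parts (empty parts allowed), I would fix one that minimizes the weighted potential
\[
  \Phi(V_1,\dots,V_s) \;=\; \sum_{i=1}^{s}\frac{e(G[V_i])}{d_i+1},
\]
where $e(G[V_i])$ denotes the number of edges of the induced subgraph $G[V_i]$; this is well defined because $d_i+1\ge 1$ for every $i$. The claim to be proved is that any minimizing partition already has the desired property, namely that $G[V_i]$ has maximum degree at most $d_i$ for all $i\in[s]$. The weights $\tfrac{1}{d_i+1}$ are precisely what makes the switching argument go through for unequal $d_i$; with uniform weights one only recovers a weaker statement.

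For the claim I would argue by contradiction: suppose some $v \in V_i$ satisfies $|N(v)\cap V_i| \ge d_i+1$. For each $j \ne i$, relocating $v$ from $V_i$ to $V_j$ changes $\Phi$ by exactly $\frac{|N(v)\cap V_j|}{d_j+1}-\frac{|N(v)\cap V_i|}{d_i+1}$ (the only induced subgraphs affected are $G[V_i]$ and $G[V_j]$), and minimality forces this quantity to be $\ge 0$. Hence $\frac{|N(v)\cap V_j|}{d_j+1}\ge \frac{|N(v)\cap V_i|}{d_i+1}\ge 1$, so $|N(v)\cap V_j|\ge d_j+1$ for every $j\ne i$. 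Together with $|N(v)\cap V_i|\ge d_i+1$ this yields
\[
  \deg_G(v) \;=\; \sum_{j=1}^{s}|N(v)\cap V_j| \;\ge\; \sum_{j=1}^{s}(d_j+1) \;=\; (d_1+\cdots+d_s)+s \;\ge\; \Delta+1,
\]
using the hypothesis $d_1+\cdots+d_s \ge \Delta - s + 1$ (the displayed statement writes $\Delta-t+1$, which I read as a typo for $\Delta-s+1$). This contradicts $\deg_G(v)\le\Delta$, so no such $v$ exists and the chosen partition works.

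I do not anticipate a genuine obstacle; the proof is essentially three lines once the right potential is identified. The two points that need a little care are: allowing empty parts, so the family of competing partitions is nonempty and closed under the single-vertex relocation; and observing that one only needs each relocation to be non-improving (change $\ge 0$), not strictly worse, so no tie-breaking subtlety arises. The degenerate case $d_1=\cdots=d_s=0$ is subsumed, since then $\Phi=\sum_i e(G[V_i])$ and the argument forces the minimizer to be a proper $s$-colouring, which is consistent with $\chi(G)\le\Delta+1\le s$.
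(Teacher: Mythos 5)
Your proof is correct. The paper does not give a proof of this lemma at all — it simply cites Lov\'asz (1966) — and the weighted-potential argument you give, minimizing $\sum_i e(G[V_i])/(d_i+1)$ over all ordered partitions (empty parts allowed) and observing that a vertex violating the degree bound in its own part would have degree at least $\sum_i (d_i+1) \ge \Delta+1$, is precisely the standard proof of Lov\'asz's decomposition theorem. You have also correctly identified that ``$\Delta - t + 1$'' in the statement is a typo for ``$\Delta - s + 1$'' (the form actually used elsewhere in the paper, e.g.\ in the definition of $(\alpha,\beta,\rho,\delta,\Delta)$-dense and in the proof of Lemma~\ref{lem:wr_embedding}).
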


The following lemma is an embedding lemma for $(\alpha, \beta, \rho, \delta, \Delta)$-dense graphs.
It is a variant of \cite[Lemma 2.5]{CoFoSu} for weighted graphs.

\begin{lem} \label{lem:wr_embedding}
Let $\alpha, \rho, \delta$ be fixed positive real numbers satisfying $\rho \le \frac{1}{16}$
and $\delta \ge \frac{\rho}{2}$.
If $\Gamma$ is an $(\alpha, \frac{1}{2\Delta}, \rho, \delta, \Delta)$-dense graph on 
$N \ge 8\alpha^{-1}(\frac{2}{\delta})^{\Delta}n$ vertices, 
then $\Gamma$ contains a copy of every weighted graph $(G,w)$ of total weight at most $n$
and maximum degree at most $\Delta$.
\end{lem}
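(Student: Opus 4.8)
The plan is to embed the weighted graph $(G,w)$ greedily, one vertex at a time, mimicking the embedding argument of \cite[Lemma 2.5]{CoFoSu} but keeping track of accumulated weight instead of a simple vertex count. First I would unpack the $(\alpha, \frac{1}{2\Delta}, \rho, \delta, \Delta)$-dense structure: a sequence $U_1, \dots, U_s$ of disjoint sets, each of size at least $\alpha N$, and exponents $d_1 + \cdots + d_s = \Delta - s + 1$, so that each $\Gamma[U_i]$ is bi-$(\rho^{2d_i}, \delta)$-dense and the $U_i$ are almost-complete to each other (each vertex of $U_i$ misses at most $\frac{1}{2\Delta}|U_j|$ vertices of $U_j$ for $i<j$). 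Using Lemma~\ref{lem:Lovasz}, partition $V(G) = V_1 \cup \cdots \cup V_s$ so that $G[V_i]$ has maximum degree at most $d_i$. We will embed $V_1, V_2, \dots, V_s$ in that order, placing the image of $V_i$ inside $U_i$; edges of $G$ within $V_i$ are handled by the bi-density of $U_i$, and edges between $V_i$ and $V_j$ (say $i < j$) are automatically fine once $V_j$ is embedded, because $U_i$ is nearly complete to $U_j$ — this is exactly where the $\frac{1}{2\Delta}$ slack is spent, one $\frac{1}{2\Delta}|U_j|$ loss per already-embedded class, at most $\Delta$ times total.

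The core of the argument is the within-$U_i$ embedding. Fix $i$ and order the vertices of $V_i$ by decreasing weight, $x_1, x_2, \dots$, as in the wheel-graph argument above. Suppose $x_1, \dots, x_{\ell-1}$ are embedded with partial embedding $\phi$; the new vertex $x_\ell$ has at most $d_i$ neighbors already embedded inside $V_i$, plus neighbors in later classes (which impose no constraint yet) and at most $\Delta$ neighbors total in earlier classes $V_{i'}$, $i' < i$. The candidate set for $\phi(x_\ell)$ is the set of common neighbors, within $U_i$, of the images of the already-embedded $V_i$-neighbors of $x_\ell$, intersected with the common neighborhoods (in $U_i$) of the images of its earlier-class neighbors. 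Iterated bi-density (as in \cite{CoFoSu}): each time we restrict to the common neighborhood of one already-placed $V_i$-neighbor we lose a factor $\delta$ on a set of size at least $\rho^{2d_i}|U_i|$; since $x_\ell$ has at most $d_i$ such neighbors and the exponent $\rho^{2d_i}$ is calibrated precisely so that $\rho^{2d_i}|U_i|$ stays above the bi-density threshold through $d_i$ rounds of halving-type loss, the candidate set retains size at least roughly $\delta^{d_i} |U_i|$; the cross-class neighbors cost a further additive $\Delta \cdot \frac{1}{2\Delta}|U_i| \le \frac12|U_i|$, which is absorbed because $\delta^{d_i}|U_i| \ge (\delta/2)^{\Delta}\alpha N$ and $N$ is large enough by hypothesis. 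So the candidate set $R$ has size at least, say, $(\delta/2)^{\Delta}\alpha N \ge 8n$ by the bound on $N$ (the factor $8$ giving room).

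Finally, the weight bookkeeping, exactly as in the wheel argument: if every $v \in R$ had $w(\phi^{-1}(v)) + w(x_\ell) > 1$, then either $w(x_\ell) \le \frac12$, forcing $w(\phi^{-1}(v)) > \frac12$ for all $v \in R$, or $w(x_\ell) > \frac12$, in which case all earlier $x_j$ also have weight $>\frac12$, so $w(\phi^{-1}(v)) > 0$ forces $\phi^{-1}(v) \ne \emptyset$ and hence $w(\phi^{-1}(v)) > \frac12$ again; either way $w(\phi^{-1}(R)) > \frac12 |R| \ge 4n > n = $ (upper bound on) total weight of $G$, a contradiction since the embedded weight never exceeds $w(V(G)) \le n$. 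Hence some $v \in R$ can receive $x_\ell$ without violating the weight constraint, and we continue. I expect the main obstacle to be the precise constant accounting in the candidate-set size estimate — tracking how the exponent $\rho^{2d_i}$ interacts with the $d_i$ rounds of density loss and confirming that the threshold $\rho^{2d_i}|U_i|$ is never breached, together with checking that the hypothesis $\rho \le \frac{1}{16}$, $\delta \ge \rho/2$ and $N \ge 8\alpha^{-1}(2/\delta)^\Delta n$ are exactly what is needed to close the induction (this is essentially the verification already done in \cite{CoFoSu}, adapted to carry the weight parameter through).
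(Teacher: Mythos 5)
Your high-level plan matches the paper's proof: decompose via Lemma~\ref{lem:Lovasz}, embed $V_i$ into $U_i$ ordered by non-increasing weight, and handle within-class edges by bi-density and cross-class edges by the $\frac{1}{2\Delta}$ slack. The weight bookkeeping at the end is also exactly the paper's argument. However, there is a genuine gap in the ``iterated bi-density'' step.

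Bi-$(\rho^{2d_i},\delta)$-density is a \emph{density} condition on pairs of large sets, not a minimum-degree condition. When you assert that ``each time we restrict to the common neighborhood of one already-placed $V_i$-neighbor we lose a factor $\delta$,'' you are implicitly claiming that whichever vertex $u$ you chose as the image of an earlier same-class neighbor satisfies $|N(u)\cap W_i| \ge \delta|W_i|$ for the candidate set $W_i$ of a later neighbor $x_i$. This does not follow from bi-density: even with density $\ge\delta$ between any two large sets, a particular vertex $u$ may have no neighbors at all in $W_i$. If such a $u$ is chosen, the candidate set of $x_i$ collapses and the invariant fails. You therefore cannot apply the weight argument to the full candidate set $R$ and pick an arbitrary weight-feasible vertex there.

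The paper's proof resolves this by restricting attention to the subset $W\subseteq R$ of vertices $u$ that additionally satisfy $|N(u)\cap W_i^{(t)}|\ge\frac{\delta}{2}|W_i^{(t)}|$ for every not-yet-embedded same-class neighbor $v_i$ of the current vertex, and then \emph{interleaving} the weight argument with a bi-density contradiction. Concretely: suppose no vertex of $W$ is weight-feasible; then (by the argument you gave) every $u\in W$ carries accumulated weight $>\frac12$, so $|W|<2n$. Since $N\ge 8\alpha^{-1}(2/\delta)^\Delta n$ guarantees $|R|=|W_t^{(t)}|\ge\frac12(\delta/2)^{k}|U_{\pi(t)}| \gg 2n$, the complement $R\setminus W$ is large, and by pigeonhole some future index $i_0$ has at least $\rho^{2k}|U_{\pi(t)}|$ vertices with fewer than $\frac{\delta}{2}|W_{i_0}^{(t)}|$ neighbors in $W_{i_0}^{(t)}$; carving out disjoint sets $X_1',X_2'$ of size $\ge\rho^{2k}|U_{\pi(t)}|$ then contradicts bi-$(\rho^{2k},\delta)$-density of $\Gamma[U_{\pi(t)}]$. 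Note that the weight upper bound $2n$ and the density threshold are used \emph{together} in a single contradiction; this coupling is exactly what a straightforward ``adapt CFS and carry the weight through'' misses, and it is the place your argument would need to be repaired.
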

\begin{proof}
Note that if $\Delta=0$, then the conclusion trivially holds, and
hence we may assume that $\Delta \ge 1$.
By definition, there exists a sequence $U_1, U_2, \cdots, U_s$ of disjoint vertex subsets of $\Gamma$
each of cardinality at least $\alpha N$ and non-negative integers $\Delta_1, \ldots, \Delta_s$
such that $\Delta_1 + \cdots + \Delta_s = \Delta - s + 1$ for which
\begin{itemize}
  \setlength{\itemsep}{1pt} \setlength{\parskip}{0pt}
  \setlength{\parsep}{0pt}
\item[(i)] the induced subgraph $\Gamma[U_j]$ is bi-$(\rho^{2\Delta_j}, \delta)$-dense for each $j \in [s]$, and
\item[(ii)] for $1 \le j < j' \le s$, each vertex in $U_{j}$ has at least $\left(1-\frac{1}{2\Delta}\right)|U_{j'}|$ neighbors in $U_{j'}$.
\end{itemize}

Let $(G,w)$ be a weighted graph of total weight at most $n$
and maximum degree at most $\Delta$.
By Lemma~\ref{lem:Lovasz}, there exists a vertex partition $V(G) = V_1 \cup \cdots \cup V_s$
such that for all $j \in [s]$, the induced subgraph $G[V_j]$ has maximum degree at most $\Delta_j$.
Let $v_1, v_2, \cdots, v_n$ be an enumeration of the vertices of $G$ with the following properties:
\begin{itemize}
  \setlength{\itemsep}{1pt} \setlength{\parskip}{0pt}
  \setlength{\parsep}{0pt}
\item[(a)] For all $1 \le j < j' \le s$, the vertices in $V_j$ come before vertices in $V_{j'}$, and
\item[(b)] for all $j \in [s]$, the vertices in $V_j$ are ordered so that their weights form a non-increasing sequence.
\end{itemize}
For each $t \in [n]$, define $\pi(t) \in [s]$ as the index for which $v_t \in V_{\pi(t)}$.

Consider the following greedy algorithm of embedding the vertices of $G$, where the $t$-th step
of the algorithm selects the image of $v_t$ in $V(\Gamma)$. 
At time $t$, the algorithm is given as input a partial embedding $f$ 
defined on $\{v_1, \cdots, v_{t-1}\}$,
where at the initial step, $f$ is partial embedding of the empty graph.
For $t \in [n]$ and $i \ge t$, define $N_i^{(t)} = N(v_i) \cap \{v_1, \cdots, v_{t-1}\}$ 
as the set of neighbors of $v_i$ that precede $v_t$. 
Define $W_i^{(t)} = U_{\pi(i)} \cap \bigcap_{v \in N_i^{(t)}} N(f(v))$ and note that 
each vertex in $W_t^{(t)}$ can be used as the image of $v_t$ to extend the partial embedding.
For each $i \ge t$, define $d_i^{(t)} = |N_i^{(t)} \cap V_{\pi(i)}|$ as the number of neighbors of $v_i$
preceding itself in its own part.
Throughout the process, we will maintain the following property:
\begin{align} \label{eq:sizebound}
	\forall i \ge t, \quad |W_i^{(t)}| \ge \frac{1}{2}\left(\frac{\delta}{2}\right)^{d_i^{(t)}} |U_{\pi(i)}|.
\end{align}
Note that if $d_i^{(t)} = 0$, then \eqref{eq:sizebound} follows from Property (ii) since
$|N^{(t)}_i| \le \deg(v_t) \le \Delta$ holds and
\begin{align} \label{eq:sizebound_free}
	|W_i^{(t)}| \ge |U_{\pi(i)}| - \frac{1}{2\Delta}|U_{\pi(i)}| \cdot |N_i^{(t)}| \ge \frac{1}{2} |U_{\pi(i)}|.
\end{align}
Initially at $t=1$, we define $W_i^{(1)} = U_{\pi(i)}$ for all $i \in [n]$. 
Moreover since $d_i^{(1)} = 0$ for all $i \in [n]$, equation \eqref{eq:sizebound} holds.

Suppose that we are at the $t$-th step of the algorithm for some $t \in [n]$.
Define $I^+ = \{i > t \,:\, v_i \in N(v_t) \cap V_{\pi(t)} \}$.
Let $W \subseteq W_t^{(t)}$ be the set of vertices
$u \in W_t^{(t)}$  such that for all $i \in I^+$, 
$|N(u) \cap W_i^{(t)}| \ge \frac{\delta}{2} |W_i^{(t)}|$. 
If there exists a vertex $u \in W$ such that $w(f^{-1}(u)) \le 1 - w(v_t)$,
then define $f(v_t) = u$. This is a partial embedding since $f(v_t) \in W_t^{(t)}$
and $w(f^{-1}(u)) \le 1$.
Furthermore \eqref{eq:sizebound} is satisfied for $i > t$
having $\pi(i) > \pi(t)$ by \eqref{eq:sizebound_free}, and having $\pi(i) = \pi(t)$ 
but $i \notin I^+$ since $W_i^{(t+1)} = W_i^{(t)}$.
If $\pi(i) = \pi(t)$ and $i \in I^+$, then $d_i^{(t+1)} = d_i^{(t)} + 1$ and therefore \eqref{eq:sizebound}
holds since $W_i^{(t+1)} = N(u) \cap W_i^{(t)}$.

Therefore it suffices to prove the existence of a vertex $u \in W$ satisfying $w(f^{-1}(u)) \le 1 - w(v_t)$.
Suppose that all vertices $u \in W$ satisfy $w(f^{-1}(u)) > 1 - w(v_t)$.
Recall that $w(v_j) \ge w(v_t)$ for all $j \le t$
satisfying $\pi(j) = \pi(t)$ by Property (b).
Since $w(f^{-1}(u)) > 1 - w(v_t) \ge 0$ implies that $f^{-1}(u) \neq \emptyset$,
if $w(v_t) > \frac{1}{2}$, then it follows that $w(f^{-1}(u)) \ge w(v_t) > \frac{1}{2}$. 
On the other hand, if $w(v_t) \le \frac{1}{2}$, then $w(f^{-1}(u)) > 1 - w(v_t) \ge \frac{1}{2}$. 
Therefore for all vertices $u \in W$, we have $w(f^{-1}(u)) > \frac{1}{2}$. 
Since 
\[
	\frac{1}{2}|W| < \sum_{u \in W} w(f^{-1}(u)) \le w(V(G)) \le n,
\]
we see that $|W| < 2n$.

For all vertices $u \in W_t^{(t)} \setminus W$, there
exists $i \in I^+$ such that $|N(u) \cap W_i^{(t)}| < \frac{\delta}{2} |W_i^{(t)}|$. 
For notational simplicity, define $k = \Delta_{\pi(t)}$.
Since $|I_+| \le k$, by the pigeonhole principle, there exists an index $i_0 \in I^+$ such that
$|N(u) \cap W_{i_0}^{(t)}| < \frac{\delta}{2} |W_{i_0}^{(t)}|$ holds for at least
$\frac{1}{k}|W_t^{(t)} \setminus W|$ vertices. Let $X_1$ be the set of 
these vertices and note that
\[
	|X_1|
	\ge \frac{1}{k}(|W_t^{(t)}| - 2n)
	\ge \frac{1}{k}\left(\frac{1}{2}\left(\frac{\delta}{2}\right)^{k} |U_{\pi(t)}| - 2n\right)
	\ge \frac{1}{4k}\left(\frac{\delta}{2}\right)^{k} |U_{\pi(t)}|
	\ge \rho^{2k}|U_{\pi(t)}|,
\]
where the second to last inequality follows since $|U_{\pi(t)}| \ge \alpha N$
and the last inequality follows since $\rho \le \frac{1}{16}$ and $\rho \le 2\delta$. 

Define $X_2 = W_{i_0}^{(t)}$ and note that \eqref{eq:sizebound} implies 
$|X_2| \ge \frac{1}{2}(\frac{\delta}{2})^{k}|U_{\pi(t)}| \ge 2\rho^{2k} |U_{\pi(t)}|$. 
Let $X_1' \subseteq X_1$ be an arbitrary subset of size exactly $\rho^{2k}|U_{\pi(t)}|$, and
define $X_2' = X_2 \setminus X_1'$.  Then $|X_2'| \ge \frac{1}{2}|X_2| \ge \rho^{2k}|U_{\pi(t)}|$. 
Furthermore, each vertex $w \in X_1'$ has at most $\frac{\delta}{2}|X_2| \le \delta |X_2'|$ neighbors in $X_2'$.
This contradicts the fact that $\Gamma[U_{\pi(t)}]$ is bi-$(\rho^{2k}, \delta)$-dense.
Therefore there exists a vertex $u \in W$ satisfying  $w(f^{-1}(u)) \le 1 - w(v_t)$.
\end{proof}

Theorem~\ref{thm:weight_cfs} straightforwardly follows from Lemmas~\ref{lem:cfs_stable} and \ref{lem:wr_embedding}.

\begin{thm*}
There exists a constant $c > 1$ such that the following holds for all $\Delta$ and $\varepsilon$
satisfying $\varepsilon < c^{-\Delta \log \Delta}$.
If $(G, w)$ is a weighted graph with maximum degree at most $\Delta$ and total weight at most $n$,
then $\hat{r}_\varepsilon(G) \le c^{\Delta \log \Delta} n$.
\end{thm*}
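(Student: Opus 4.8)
The plan is to prove Theorem~\ref{thm:weight_cfs} by feeding the conclusion of Lemma~\ref{lem:cfs_stable} into the weighted embedding Lemma~\ref{lem:wr_embedding}, exactly as the sentence preceding the statement advertises. All of the substance already lives in those two lemmas; the only thing left to do is to choose the auxiliary parameters $h, D, \rho, \delta, \alpha$ as functions of $\Delta$ so that every numerical side condition is met while both the resulting bound on $N$ and the admissible threshold on $\varepsilon$ stay of the form $2^{\Theta(\Delta\log\Delta)}$.

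Concretely, I would first handle the trivial cases $\Delta\in\{0,1\}$ separately (or simply note that the interesting range is $\Delta\ge 2$), and then set $h=\lceil\log(\Delta+1)\rceil$ and $D=2^{h}-1$, so that $\Delta\le D\le 2\Delta$ and $h\le\log\Delta+2$. Next I would put $\rho=\frac{1}{8\Delta(D+1)}$; this is the value making the ``density defect'' $4(D+1)\rho$ delivered by Lemma~\ref{lem:cfs_stable} equal to the quantity $\frac{1}{2\Delta}$ demanded by Lemma~\ref{lem:wr_embedding}, and it satisfies $\rho\le\frac{1}{16}$. Then set $\delta=\frac{\rho}{2}$ and $\alpha=2^{-2h}\rho^{6D-4h}$, let $c$ be a large absolute constant to be fixed at the end, and set $N=\big\lceil 8\alpha^{-1}(4/\rho)^{\Delta}\big\rceil\,n$.

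With these choices the argument is short. Assume $\varepsilon<c^{-\Delta\log\Delta}$. Since $\log(1/\rho)=\log\!\big(8\Delta(D+1)\big)=O(\log\Delta)$ and $D,h=O(\Delta)$, one checks that $2^{-2h-1}\rho^{8D-4h+1}\ge c^{-\Delta\log\Delta}>\varepsilon$ once $c$ is large enough, so the hypothesis of Lemma~\ref{lem:cfs_stable} holds. Given any graph $\Gamma$ on $N$ vertices with minimum degree at least $(1-\varepsilon)N$ and any red/blue colouring of $E(\Gamma)$, Lemma~\ref{lem:cfs_stable} yields that the red graph or the blue graph is $(\alpha,4(D+1)\rho,\rho,\tfrac{\rho}{2},D)$-dense; since $4(D+1)\rho=\tfrac{1}{2\Delta}$ and $D\ge\Delta$, the monotonicity of the $(\cdot,\cdot,\cdot,\cdot,\cdot)$-dense property makes this monochromatic graph $(\alpha,\tfrac{1}{2\Delta},\rho,\tfrac{\rho}{2},\Delta)$-dense. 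The hypotheses $\rho\le\tfrac{1}{16}$ and $\delta\ge\tfrac{\rho}{2}$ (the latter with equality) of Lemma~\ref{lem:wr_embedding} hold, and $N\ge 8\alpha^{-1}(2/\delta)^{\Delta}n$ by construction, so that lemma produces a (monochromatic) copy of $(G,w)$ in $\Gamma$. Hence $\hat r_\varepsilon(G)\le N$.

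Finally I would verify $N\le c^{\Delta\log\Delta}n$, i.e.\ that $\log\!\big(8\alpha^{-1}(4/\rho)^{\Delta}\big)=O(\Delta\log\Delta)$: here $\log(1/\alpha)=2h+(6D-4h)\log(1/\rho)=O(\log\Delta)+O(\Delta)\cdot O(\log\Delta)=O(\Delta\log\Delta)$, and $\Delta\log(4/\rho)=O(\Delta\log\Delta)$. The same estimates show the threshold $2^{-2h-1}\rho^{8D-4h+1}$ is $2^{-O(\Delta\log\Delta)}$, so a single sufficiently large $c$ serves simultaneously as the base of the bound on $N$ and, through $c^{-\Delta\log\Delta}$, as the lower cutoff for $\varepsilon$. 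There is no genuine obstacle here — the content is entirely carried by Lemmas~\ref{lem:cfs_stable} and \ref{lem:wr_embedding}; the only points requiring a little care are keeping the constants uniform in $\Delta$, using $D\ge\Delta$ so that $(\cdots,D)$-density implies $(\cdots,\Delta)$-density, and the bookkeeping for the small-$\Delta$ cases.
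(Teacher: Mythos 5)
Your proof is correct and follows essentially the same route as the paper: apply Lemma~\ref{lem:cfs_stable}, use monotonicity of the density property to pass from the parameter $D$ down to $\Delta$ and to arrange $\beta=\frac{1}{2\Delta}$, then invoke Lemma~\ref{lem:wr_embedding}, with the parameter choices $h\approx\log\Delta$, $D=2^h-1$, $\rho=\Theta(\Delta^{-2})$, $\delta=\rho/2$ yielding bounds of the form $2^{\Theta(\Delta\log\Delta)}$. The only differences from the paper are cosmetic (the paper takes $\rho=\frac{1}{2^{h+4}\Delta}$ rather than your $\frac{1}{8\Delta(D+1)}$, and it further relaxes $\alpha$ to $\rho^{12D}$ before applying the embedding lemma), so no substantive comparison is warranted.
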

\begin{proof}
Let $N = c^{\Delta \log \Delta} n$ for a constant $c$ to be chosen later.
Let $(G,w)$ be a weighted graph given as above. 
Suppose that $\Gamma$ is a graph on $N$ vertices with minimum degree at least $(1-\varepsilon)N$,
and consider an edge-coloring with two colors red and blue.
Let $h$ be the integer satisfying
$2^{h-1} \le \Delta \le 2^{h}-1$ and note that $h \le \log(2\Delta)$. Define $D = 2^{h}-1 \le 2\Delta$
and $\rho = \frac{1}{2^{h+4}\Delta} \ge \frac{1}{32\Delta^2}$.
If $c$ is sufficiently large, then $\varepsilon < 2^{-2h-1}\rho^{8D-4h+1}$ and thus
by Lemma~\ref{lem:cfs_stable}, we see that
the red graph or the blue graph is $(2^{-2h}\rho^{6D-4h}, 4(D+1)\rho, \rho, \frac{1}{2}\rho, D)$-dense.
Without loss of generality, assume that it is the red graph.
Then by monotonicity, the red graph is $(\rho^{12D}, \frac{1}{2\Delta}, \rho, \frac{1}{2}\rho, \Delta)$-dense.
Therefore if $c$ is large enough, then $N \ge 8 \rho^{-12D}(\frac{4}{\rho})^{\Delta}n$ and
by Lemma~\ref{lem:wr_embedding}, the red graph contains a copy of $G$.
\end{proof}

\section{A variant of the blow-up lemma}
\label{sec:blowup}

In this section, we prove Lemma~\ref{lem:blowup}, a variant of the blow-up lemma.
We will use a simplified version of the Random Greedy Algorithm (RGA)
developed by Koml\'os, S\'ark\"ozy, and Szemer\'edi \cite{KoSaSz}.
Their original algorithm consisted of two phases. In Phase 1, they embed the 
vertices one at a time, where at each step one considers all possible images 
that is consistent with the previous embedding and choose a random vertex among them.
Phase 1 continues until almost all vertices of the graph has been embedded. 
In Phase 2, they finish the embedding by invoking Hall's theorem.
For our proof, we do not need the second phase, 
since we only need an almost spanning embedding.
One can prove Lemma~\ref{lem:blowup} by carefully making this adjustent in their proof. 
It is rather straightforward to incorporate this change, but we include the proof here for completeness.

\medskip

Let $G$ be a graph with maximum degree at most $\Delta$.
Let $\Gamma$ be a graph with a vertex partition $\{V_i\}_{i=1}^{k}$ satisfying $|V_i| \ge (1+\xi)m$ for all $i \in [k]$, and let $R$ be its $(\varepsilon, \delta)$-reduced graph.
Suppose that there exists a homomorphism $f$ from $G$ to $R$ where for all $i \in [k]$, $|f^{-1}(i)| \le m$ for all $i \in [k]$.
For simplicity we assume that $|f^{-1}(i)| = m$ for all $i \in [k]$ by adding isolated vertices if necessary.
In order to avoid confusion, we will refer to the vertices in $G$ using $x,y$
and the vertices in $\Gamma$ using $v,w$.

Let $\varepsilon, \varepsilon_1, \varepsilon_2$ be positive real numbers 
satisfying $\varepsilon \ll \varepsilon_2 \ll \varepsilon_1$ where
$\varepsilon_1$ is small enough depending on $\delta$ and $\xi$. 
We first embed $f^{-1}(1)$ to $V_1$, then $f^{-1}(2)$ to $V_2$, and continue until 
we embed $f^{-1}(k)$ to $V_k$.
Suppose that we finished embedding $f^{-1}(i-1)$ to $V_{i-1}$ for some $i \in [k]$. 
Define $A_0 = f^{-1}(1) \cup \cdots \cup f^{-1}(i-1)$ and $B_0 =V(G) \setminus A_0$. 
For each $y \in B_0$, define $N_0(y) = N(y) \cap A_0$ as the set of 
neighbors of $y$ already embedded, and let $d_0(Y) = |N_0(y)|$.
Define $U_0(y) = V_{f(y)} \cap \bigcap_{z \in N_0(y)} N(\phi(z))$.
Consider the following property:
\begin{quote}
$\mathcal{P}(i)$ : For all $X \subseteq V_i$ of size $\varepsilon_1 |V_i| \le |X| \le m$, 
there are less than $\varepsilon_1 m$ vertices $y \in f^{-1}(i)$ such that $|U_0(y) \cap X| \ge (1- \varepsilon_1) |U_0(y)|$.
\end{quote}
We will show that there exists a random embedding algorithm that embeds $f^{-1}(i)$ to $V_i$ 
so that the probability that $\mathcal{P}(1), \cdots \mathcal{P}(i)$ hold 
but $\mathcal{P}(i+1)$ does not is small. 

Fix an arbitrary enumeration of the vertices in $f^{-1}(i)$.
We will iteratively embed the vertices of $f^{-1}(i)$ mainly following the order of this enumeration.
For some $s \ge 0$, suppose that we finished embedding $s$ vertices of $f^{-1}(i)$ and let 
$A_s \subseteq V(G)$ be the set of embedded vertices and $B_s = V(G) \setminus A_s$ be 
its complement. Hence $|A_s \setminus A_0| = s$. Let $\phi$ be the 
partial embedding of $G$ to $\Gamma$ defined on $A_s$.
We will maintain a first-in first-out queue $Q$ throughout the process, where initially $Q = \emptyset$. 
At the next step, if $Q \neq \emptyset$, then we let $x_{s}$ be the first vertex in $Q$, 
and if $Q = \emptyset$, then we let $x_{s}$ be the first non-embedded vertex according to the
enumeration given above. 
We will define the image of $x_{s}$ in the next step.

For each vertex $y \in B_s$, define $N_s(y) = N(y) \cap A_s$ as the set of 
neighbors of $y$ already embedded, and let $d_s(Y) = |N_s(y)|$.
Define $U_s(y) =  V_{f(y)} \cap \bigcap_{z \in N_s(y)} N(\phi(z))$.
Throughout the process, we will maintain the following properties:
\begin{itemize}
  \setlength{\itemsep}{1pt} \setlength{\parskip}{0pt}
  \setlength{\parsep}{0pt}
  \item[(i)] for all $y \in B_s$, we have $|U_s(y)| \ge (\delta - \varepsilon)^{d_s(y)}|V_{f(y)}|$,
  \item[(ii)] for all $y \in f^{-1}(i) \setminus Q$, we have $|U_s(y) \setminus \phi(A_s)| \ge \varepsilon_2 |V_{i}|$, and
  \item[(iii)] $|Q| \le \varepsilon_1 m$.
\end{itemize}
We will add a vertex to $Q$ when and only when (ii) fails. Thus Property (ii) 
always holds. We will later show that Property (i) is maintained by how we choose
the embedding $\phi$.
Note that since we are embedding
vertices in $f^{-1}(i)$ to $V_i$, the definition of $Q$ implies $Q \subseteq f^{-1}(i)$. 
Furthermore since $f^{-1}(i)$ is an independent set, 
for all $y \in B_s \cap f^{-1}(i)$, we have $N_s(y) = N_0(y)$ and hence $U_s(y) = U_0(y)$.
Thus $|U_t(y) \setminus \phi(A_t)|$ is non-increasing in time $t$. 
For $y \in Q$, since $|U_t(y) \setminus \phi(A_s)| < \varepsilon_2|V_{i}|$ at the time $t$ that $y$ was added to $Q$, it follows that if $y \in Q$ at time $s$, then 
$|U_s(y) \setminus \phi(A_s)| < \varepsilon_2 |V_{i}|$. 
For all $y \in Q$, since $N_s(y) = N_0(y)$ and $d_s(y) = d_0(y)$ for all $y \in Q$, by Property (i),
\begin{align*}
	|U_s(y) \cap \phi(A_s)| 
	=&\, |U_s(y)| - |U_s(y) \setminus \phi(A_s)| \\
	=&\, \left(1 - \frac{|U_s(y) \setminus \phi(A_s)|}{|U_s(y)|} \right) |U_0(y)| \\
	\ge&\, \left(1 - \frac{\varepsilon_2}{(\delta - \varepsilon)^{\Delta}} \right) |U_0(y)|
	\ge \left(1 - \varepsilon_1\right) |U_0(y)|.
\end{align*}
If $|V_i \cap \phi(A_s)| = s \ge \varepsilon_1 |V_i|$, then by Property $\mathcal{P}(i)$
it follows that $|Q| \le \varepsilon_1 m$. On the other hand if $s < \varepsilon_1 |V_i|$, then
for all $y \in B_s$, by Property (i) we have $|U_s(y) \setminus \phi(A_s)| \ge (\delta - \varepsilon)^{d_s(y)}|V_{f(y)}| - \varepsilon_1|V_{f(y)}| > \varepsilon_1|V_{f(y)}|$ and therefore $y \notin Q$. 
This implies that $Q = \emptyset$ if $s < \varepsilon_1|V_i|$. 
Therefore Property (iii) holds if $\mathcal{P}(i)$ holds.

Define $U = U_s(x_s)$. 
Note that if $x_s \notin Q$, then $|U \setminus A_s| \ge \varepsilon_2 |V_i|$. 
On the other hand, suppose that $x_s \in Q$. 
As observed above, $d_t(x_s)$ is constant for $t=0,1,2\cdots, m-1$. 
Therefore the size of $U \setminus A_s$ can change by at most one at each step. 
Since $Q$ is a first-in first-out queue, by Property (iii), 
there are at most $\varepsilon_1 m$ steps between the 
time that $x_s$ was first added to the queue and time $s$. This implies that
$|U \setminus A_s| \ge \varepsilon_2 |V_i| - \varepsilon_1 m \ge \frac{1}{2} \varepsilon_2 |V_i|$. 
Therefore in both cases, we have $|U \setminus A_s| \ge \frac{1}{2}\varepsilon_2 |V_i|$. 

For each $y \in N(x_s) \cap B_s$, since $f$ is a homomorphism from $G$ to $R$, 
we know that the pair $(V_i, V_{f(y)})$ is $\varepsilon$-regular of density at least $\delta$.
Moreover since $U \subseteq V_i, U_s(y) \subseteq V_{f(y)}$, and
$|U_s(y)| \ge (\delta - \varepsilon)^{d_s(y)}|V_{f(y)}|$ 
(by Property (i)), the set of vertices $Z_y \subseteq U$ with
less than $(\delta - \varepsilon)|U_s(y)|$ neighbors in $U_s(y)$ has size
$|Z_y| \le \varepsilon |V_{i}|$.
Define $U' = (U \setminus A_s) \setminus \bigcup_{y \in N(x_s) \cap B_s} Z_y$ and note that
\[
	|U'| \ge \frac{1}{2}\varepsilon_2 |V_i| - \Delta\varepsilon |V_i| \ge \frac{1}{4}\varepsilon_2 |V_i|. 
\]
Let $\phi(x_s)$ be a vertex in $U'$ chosen uniformly at random. 
The following lemma shows that Property $\mathcal{P}(i+1)$ holds with high probability 
after we finish embedding $f^{-1}(i)$ to $V_i$. 

\begin{lem} \label{lem:prob}
The probability that $\mathcal{P}(i+1)$ does not holds but $\mathcal{P}(1), \cdots, \mathcal{P}(i)$
holds is at most $e^{-\Omega(m)}$. 
\end{lem}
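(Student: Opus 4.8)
The plan is to fix the colouring, fix the history of the algorithm before phase $i$ (the embedding of $f^{-1}(1),\dots,f^{-1}(i-1)$), and assume that history is an outcome for which $\mathcal P(1),\dots,\mathcal P(i)$ hold — so, as shown in the text, throughout the embedding of $f^{-1}(i)$ into $V_i$ the queue satisfies $|Q|\le\varepsilon_1 m$ and every image $\phi(x_s)$ is uniform on a set $U'$ with $|U'|\ge\tfrac14\varepsilon_2|V_i|$. It then suffices to show that, for each admissible set $X\subseteq V_{i+1}$ (meaning $\varepsilon_1|V_{i+1}|\le|X|\le m$), the probability of the event $B_X$ that at least $\varepsilon_1 m$ vertices $y\in f^{-1}(i+1)$ are \emph{bad for $X$} (i.e.\ $|U_0'(y)\cap X|\ge(1-\varepsilon_1)|U_0'(y)|$, where $U_0'(y)$ is the candidate set of $y$ after phase $i$) is at most $e^{-\Omega(m)}$, and then to sum over the admissible $X$.

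The key is a one-step estimate. Call $y$ \emph{safe for $X$} at a given moment if its current candidate set $U(y)$ has $|U(y)\setminus X|\ge 3\varepsilon_1|U(y)|$. For $y\in f^{-1}(i+1)$, let $z^{(1)},\dots,z^{(p)}$ ($p\le\Delta$) be its neighbours in $f^{-1}(i)$ in embedding order, with candidate sets $U^{(0)}=U_0(y)$ and $U^{(j)}=U^{(j-1)}\cap N(\phi(z^{(j)}))$, so $U_0'(y)=U^{(p)}$. If $y$ is safe for $X$ w.r.t.\ $U^{(j-1)}$, then (by property (i), and because $\varepsilon$ is tiny) both $U^{(j-1)}$ and $U^{(j-1)}\setminus X$ have size at least $\varepsilon|V_{i+1}|$; since the pair $(V_i,V_{i+1})$ is $\varepsilon$-regular of density at least $\delta$ — $f$ being a homomorphism into $R$ and $y$ having a neighbour in $f^{-1}(i)$ — all but at most $2\varepsilon|V_i|$ vertices $v\in V_i$ have $|N(v)\cap U^{(j-1)}|$ and $|N(v)\cap(U^{(j-1)}\setminus X)|$ both within a factor $1\pm\varepsilon/\delta$ of their averages, and as $\phi(z^{(j)})$ is uniform on a set of size $\ge\tfrac14\varepsilon_2|V_i|$ it is such a vertex with probability $\ge 1-8\varepsilon/\varepsilon_2$; on that event the ratio $|U^{(j)}\setminus X|/|U^{(j)}|$ is at least $(1-2\varepsilon/\delta)$ times $|U^{(j-1)}\setminus X|/|U^{(j-1)}|$. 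Iterating over the $\le\Delta$ steps touching $y$, a vertex safe for $X$ when phase $i$ begins ends up, except with probability $8\Delta\varepsilon/\varepsilon_2$, with $|U_0'(y)\setminus X|/|U_0'(y)|\ge(1-2\varepsilon/\delta)^{\Delta}3\varepsilon_1\ge 2\varepsilon_1>\varepsilon_1$, i.e.\ not bad for $X$. It remains to bound the number of $y\in f^{-1}(i+1)$ \emph{not} safe for $X$ when phase $i$ begins: a vertex with no neighbour in $f^{-1}(1)\cup\dots\cup f^{-1}(i-1)$ has $U_0(y)=V_{i+1}$ and is safe (as $|X|\le m<(1-3\varepsilon_1)|V_{i+1}|$ once $\varepsilon_1$ is small in $\xi$), and I would bound the rest by carrying, as an inductive invariant, the stronger statement that after every phase $j$ and for every part $i'>j$, for every admissible $X\subseteq V_{i'}$ all but fewer than $\tfrac12\varepsilon_1 m$ vertices of $f^{-1}(i')$ are safe for $X$; the case $i'=j+1$ is (more than) $\mathcal P(j+1)$, and the invariant propagates from phase $j-1$ to phase $j$ by exactly the one-step estimate, applied at the phase where each vertex's last earlier neighbour was embedded.

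To finish, I would combine concentration with a union bound. Revealing the images $\phi(x_s)$ one at a time, the count $Y_X$ of vertices bad for $X$ is, modulo queue bookkeeping, a function with bounded differences — each step fixes the candidate set of at most $\Delta$ vertices of $f^{-1}(i+1)$ — with conditional expectation at most $8\Delta\varepsilon m/\varepsilon_2+\tfrac12\varepsilon_1 m\le\tfrac23\varepsilon_1 m$ once $\varepsilon\ll\varepsilon_2$, so an Azuma-type bound gives $\Pr[B_X]\le e^{-c\varepsilon_1^2 m/\Delta^2}$. As $\varepsilon_1$ is small, this alone does not survive the union over the (up to $2^{O(m)}$) admissible $X$; instead I would union over the $\binom{m}{\varepsilon_1 m}=2^{H(\varepsilon_1)m}$ possible ``bad sets'' $Y'\subseteq f^{-1}(i+1)$ of size $\varepsilon_1 m$, and for each $Y'$ bound the probability that all of $Y'$ is bad for one common admissible $X$ by the product estimate $(8\Delta\varepsilon/\varepsilon_2)^{\varepsilon_1 m/\Delta}$ — obtained by processing, one at a time, the last-in-$f^{-1}(i)$ neighbours of the members of $Y'$ (at least $\varepsilon_1 m/\Delta$ of which are distinct) and applying the one-step estimate at each. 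Since $\varepsilon$ may be chosen last and only needs $\varepsilon\ll\varepsilon_2\ll\varepsilon_1$, one can make $2^{H(\varepsilon_1)m}(8\Delta\varepsilon/\varepsilon_2)^{\varepsilon_1 m/\Delta}=e^{-\Omega(m)}$, and the lemma follows (reading the conditioning on $\mathcal P(1),\dots,\mathcal P(i)$ as including the stronger invariant after phase $i-1$, which holds with probability $1-e^{-\Omega(m)}$ by the induction).

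The step I expect to be the main obstacle is exactly this invariant bookkeeping: the regularity estimate only controls vertices of $f^{-1}(i+1)$ that enter phase $i$ safe for $X$, whereas an adversarial $X$ could already contain almost all of the candidate sets of many vertices before phase $i$ starts — e.g.\ if $\varepsilon_1 m$ of them happened to share one candidate set of size at most $m$ — a configuration phase $i$ cannot repair. Ruling it out forces one to maintain the stronger form of $\mathcal P$ above for all not-yet-finished parts at once, the only non-trivial base case being a vertex whose candidate set is still its whole part, and then to check that the hierarchy $\varepsilon\ll\varepsilon_2\ll\varepsilon_1$ (with $\varepsilon_1$ small in $\xi,\delta,\Delta$) simultaneously validates the one-step regularity bound, the inequality $(1-2\varepsilon/\delta)^{\Delta}3\varepsilon_1\ge 2\varepsilon_1$, the count of unsafe vertices, and the final union bound. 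The genuinely probabilistic content is only the one-step estimate, iterated at most $\Delta$ times and fed into a standard concentration inequality.
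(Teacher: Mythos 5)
Your route is genuinely different from the paper's and, in the end, more elaborate. The paper does not condition on the history before phase $i$: it treats the entire embedding through phases $1,\ldots,i$ as random at once. Fixing an admissible $X$ and a candidate bad set $R\subseteq f^{-1}(i+1)$ of size $\varepsilon_1 m$, it passes to $R'\subseteq R$ of size $\ge|R|/(\Delta^2+1)$ at pairwise distance at least three in $G$, so the neighbourhoods $N(y)$, $y\in R'$, are disjoint; for each $y\in R'$ it locates the first ``breakpoint'' step $t$ where $|W_t(y)\cap X|$ exceeds $(\delta+\varepsilon)^t|X|$, a step at which $\varepsilon$-regularity forces $\phi(z_t)$ into an exceptional set of size $\le\varepsilon|V_{f(z_t)}|$, a conditional-probability $\le 4\varepsilon/\varepsilon_2$ event. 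Disjointness of the $N(y)$ lets these events multiply once one unions over the $\le\Delta$ candidate culprits per $y$, and the union over $R$ and $X$ finishes. Because the earlier phases' randomness is still live, no invariant beyond $\mathcal{P}(1),\ldots,\mathcal{P}(i)$ is needed: the configuration you worry about --- $X$ already eating nearly all of some $U_0(y)$ before phase $i$ begins --- is itself a rare event that the breakpoint analysis catches in whichever earlier phase it occurred.

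Conditioning instead on the pre-phase-$i$ history forces the auxiliary ``safe for every admissible $X\subseteq V_{i'}$, every $i'>j$'' invariant, which in effect re-derives the same content phase by phase and also changes the statement being proved (the blow-up lemma's union bound would then be re-run against the new invariant; fine, but it should be made explicit). Two steps in your sketch also need repair. First, the fixed threshold $3\varepsilon_1$ cannot propagate: a vertex sitting exactly at margin $3\varepsilon_1$ before phase $j$ ends with margin $(1-2\varepsilon/\delta)^{p}\cdot 3\varepsilon_1 < 3\varepsilon_1$ even when every one-step estimate succeeds, so ``safe before $\Rightarrow$ safe after w.h.p.'' fails at the boundary and the induction stalls; you would need an adaptive threshold keyed to how many neighbours of $y$ have already been embedded, or --- cleaner --- a ``ruined'' flag (some regularity-exceptional step has touched $y$), using that an unruined vertex keeps margin at least $(1-2\varepsilon/\delta)^{\Delta}\xi/(1+\xi)\gg\varepsilon_1$. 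Second, processing only the last-in-$f^{-1}(i)$ neighbour of each $y\in Y'$ does not cover the bad event: if an earlier neighbour's step was exceptional, the one-step estimate at the last neighbour does not even apply (its hypothesis --- safe just before that step --- fails), yet $y$ is bad. One must instead union over the $\le\Delta^{|Y'|}$ culprit assignments $y\mapsto z_y$ and deduce distinctness of at least $|Y'|/\Delta$ of the $z_y$ from each $z\in f^{-1}(i)$ having at most $\Delta$ neighbours in $f^{-1}(i+1)$, which recovers, with a comparable exponent, exactly the paper's product bound. With those repairs the plan should work, but it buys nothing over the paper's global treatment and costs precisely the bookkeeping you identify as the obstacle.
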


Given this lemma, by taking the union bound, we see that the probability that $\mathcal{P}(i)$ does not hold
for some $i$ is at most $ke^{-\Omega(m)} = o(1)$. Hence with non-zero probability, 
the algorithm will successfully terminate and embed $G$ to $\Gamma$.

\begin{proof}[Proof of Lemma~\ref{lem:prob}]
Let $\mathcal{E}$ be the event that $\mathcal{P}(1), \cdots, \mathcal{P}(i)$ holds.
Fix a set $X \subseteq V_{i+1}$ of size $\varepsilon_1 |V_i| \le |X| \le m$.
Define $A = \bigcup_{j=1}^{i} f^{-1}(j)$ and $B = V(G) \setminus A$. 
For each $y \in f^{-1}(i+1)$, define $U(y) = V_{i+1} \cap \bigcap_{z \in N(y) \cap A} N(\phi(z))$.
Let $R \subseteq f^{-1}(i+1)$ be a fixed set of size at least $\varepsilon_1 m$. 
We first compute the probability that 
\begin{quote}
(*) all vertices $y \in R$ satisfies $|U(y) \cap X| \ge (1-\varepsilon_1)|U_0(y)|$.
\end{quote}
Note that $\mathcal{P}(i+1)$ holds if there are no such pair of sets $(X,R)$.

Since $G$ has maximum degree at most $\Delta$, we can find a subset
$R' \subseteq R$ of size at least $\frac{|R|}{\Delta^2+1}$ 
whose pairwise distance is at least 3 in $G$. In other words, the sets $N(y)$ are disjoint
for vertices $y \in R'$. 
Fix a vertex $y \in R'$. We examine the probability that 
$|U(y) \cap X| \ge (1-\varepsilon_1)|U(y)|$. 
Let $z_1, z_2, \cdots, z_d$ be the vertices in $A \cap N(y)$ in the order of
embedding (note that $d \le \Delta$). Then
\[
	U(y) = V_{i+1} \cap N(\phi(z_1)) \cap N(\phi(z_2)) \cdots \cap N(\phi(z_d)).
\]
For $j=0,1,2,\cdots, d$, define $W_j(y) = V_{i+1} \cap N(\phi(z_i)) \cap \cdots \cap N(\phi(z_j))$. 
By the definition of our embedding algorithm, either $\mathcal{E}$ does not hold, 
or we have $|W_j(y)| \ge (\delta - \varepsilon)^j |V_{i+1}|$ for all $j=1,2,\cdots, d$. 
Since $U(y) = W_d(y)$, we have 
\[
	|U(y) \cap X| 
	\ge (1-\varepsilon_1) |U(y)|
	\ge (1-\varepsilon_1) \cdot (\delta - \varepsilon)^d |V_{i+1}|
	> (\delta + \varepsilon)^{d} |X|.
\]
Therefore there exists some $t$ such that 
\begin{align} \label{eq:breakpoint}
|W_{t}(y) \cap X| > (\delta+\varepsilon)^{t} |X|
\quad \textrm{ but }  \quad
|W_{t-1}(y) \cap X| \le (\delta+\varepsilon)^{t-1} |X|.
\end{align}
Since $|X| \ge \varepsilon_1 |V_{i+1}|$ and $(\delta+\varepsilon)^{\Delta}\varepsilon_1 \ge \varepsilon$, 
the above can hold only if $|W_{t}(y) \cap X| \ge \varepsilon |V_{i+1}|$.
This implies that $|W_{t-1}(y) \cap X| \ge \varepsilon |V_{i+1}|$.
Furthermore, since $f$ is a homomorphism from $G$ to $R$, the pair $(V_{i+1}, V_{f(z_t)})$ is
$\varepsilon$-regular with density at least $\delta$. Since $X \subseteq V_{i+1}$,
there are at most $\varepsilon|V_{f(z_t)}|$ vertices $z \in V_{f(z_t)}$ for which 
defining $\phi(z_t) = z$ would cause \eqref{eq:breakpoint}. 
Thus we can conclude that $y \in R'$ only if there exists $z_y \in A \cap N(y)$ whose image
$\phi(z_y)$ was chosen in a set of size at most $\varepsilon |V_{f(z_y)}|$. 

Therefore (*) holds only if for each $y \in R'$, there exists $z_y \in A \cap N(y)$ as above. 
On the other hand if $\mathcal{E}$ holds, 
then $\phi(z_y)$ was chosen inside a subset of $V_{f(z_y)}$ of size at least $\frac{1}{4}\varepsilon_2 |V_{f(z_y)}|$. 
Since the vertices in $R'$ have pairwise distance at least 3, all these vertices are 
distinct. Moreover, the number of choices of these vertices $z_y$ is at most $\Delta^{|R'|}$
and thus the probability that $\mathcal{E}$ holds and (*) holds is at most
\[
	\Delta^{|R'|} \cdot \left( \frac{\varepsilon}{\varepsilon_2 / 4} \right)^{|R'|}
	\le \left( \frac{4\varepsilon \Delta}{\varepsilon_2} \right)^{|R|/(\Delta^2 + 1)}
	\le \left( \frac{4\varepsilon \Delta}{\varepsilon_2} \right)^{\varepsilon_1 m/(\Delta^2 + 1)}.
\]

The number of choices for $R$ is at most $2^m$. 
Since the size of $X$ satisfies $\varepsilon_1|V_i| \le |X| \le m$, we must have
$|V_i| \le \varepsilon_1^{-1}m$ or otherwise the lemma is vacuously true.
Therefore the number of choices for the set $X$ is at most $2^{\varepsilon_1^{-1}m}$. 
Hence if $\varepsilon$ is sufficiently small, then the lemma follows from the union bound.
\end{proof}

\section{Bipartite graphs of small bandwidth} \label{sec:bipartite_ramsey}

In this section we prove Theorem~\ref{cor:bandwidth}.
Our proof is based on a variant on the idea independently used by Conlon \cite{Conlon}, 
and by Fox and Sudakov \cite{FoSu0} based on dependent random choice.
This variant of depenent random choice has been recently used in \cite{Lee}
to establish some embedding results for degenerate graphs.
The following lemma is the main ingredient of the proof.

\begin{lem} \label{lem:maxdegree_drc}
Let $G$ be an $n$-vertex graph of minimum degree at least $\alpha n$ 
and let $X_0$ be a subset of vertices.
For every positive real number $\beta$, there exists a set $X \subseteq V(G)$ satisfying the following properties:
\begin{itemize}
  \setlength{\itemsep}{1pt} \setlength{\parskip}{0pt}
  \setlength{\parsep}{0pt}
  \item[(i)] $|X| \ge \frac{1}{2}\alpha^{2\Delta} |V(G)|$,
  \item[(ii)] $|X \cap X_0| \ge \frac{1}{2}\alpha^{2\Delta} |X_0|$, and
  \item[(iii)] the number of $\Delta$-tuples in $X^\Delta$ with less than $\beta n$ common neighbors is at most $(\frac{2\beta}{\alpha^{2\Delta}}|X|)^{\Delta}$.
\end{itemize}
\end{lem}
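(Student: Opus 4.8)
The plan is to prove this via dependent random choice. Two degenerate situations can be dispatched at once: if $\Delta = 0$ there is nothing to prove, and if $\beta \ge \frac{1}{2}\alpha^{2\Delta}$ then $\frac{2\beta}{\alpha^{2\Delta}} \ge 1$, so $X = V(G)$ already works --- $(i)$ and $(ii)$ are trivial and, since $X^\Delta$ has only $|X|^\Delta$ tuples in total, $(iii)$ holds as well. So I assume $\Delta \ge 1$ and $\beta < \frac{1}{2}\alpha^{2\Delta}$. I would choose vertices $v_1, \dots, v_{2\Delta}$ independently and uniformly at random from $V(G)$ and set $X := N(v_1) \cap \cdots \cap N(v_{2\Delta})$, their common neighborhood.

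Next I would record the relevant first moments. For any vertex $u$, the event $u \in X$ is the event that every $v_i$ lies in $N(u)$, which has probability $(\deg(u)/n)^{2\Delta} \ge \alpha^{2\Delta}$; summing over $u \in V(G)$ and over $u \in X_0$ gives $\BBE[|X|] \ge \alpha^{2\Delta} n$ and $\BBE[|X \cap X_0|] \ge \alpha^{2\Delta}|X_0|$. Let $Y$ be the number of $\Delta$-tuples $(u_1, \dots, u_\Delta) \in X^\Delta$ with $\codeg(u_1, \dots, u_\Delta) < \beta n$; such a tuple lies in $X^\Delta$ exactly when every $v_i$ is a common neighbor of $u_1, \dots, u_\Delta$, an event of probability $(\codeg(u_1, \dots, u_\Delta)/n)^{2\Delta} < \beta^{2\Delta}$, so $\BBE[Y] < n^\Delta \beta^{2\Delta}$.

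I would then combine $(i)$ and $(iii)$ into a single linear functional: consider $Z := |X| - \frac{\alpha^{2\Delta} n}{2(\beta n)^\Delta} Y$. Since $\beta^\Delta < \frac12$, linearity gives $\BBE[Z] > \alpha^{2\Delta} n \bigl(1 - \tfrac{\beta^\Delta}{2}\bigr) \ge \tfrac{3}{4}\alpha^{2\Delta} n$, and (reverse Markov, using $Z \le |X| \le n$) the event $Z \ge \tfrac{1}{2}\alpha^{2\Delta} n$ has probability at least $\tfrac{1}{4}\alpha^{2\Delta}$. On that event $|X| \ge \tfrac{1}{2}\alpha^{2\Delta} n$, which is $(i)$, and $\frac{\alpha^{2\Delta} n}{2(\beta n)^\Delta} Y \le |X|$, i.e. $Y \le \frac{2\beta^\Delta}{\alpha^{2\Delta}} n^{\Delta - 1}|X|$; raising the inequality $\alpha^{2\Delta} n \le 2|X|$ (which is just $(i)$) to the power $\Delta - 1$ converts this bound into $Y \le \bigl(\tfrac{2\beta}{\alpha^{2\Delta}}|X|\bigr)^\Delta$, which is $(iii)$.

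The remaining --- and hardest --- step is to also secure $(ii)$ for the chosen set. The difficulty here is genuine: $|X|$ and $|X \cap X_0|$ compete, so no single linear functional of $X$ forces lower bounds on both at once (a bound on $|X| + \tfrac{n}{|X_0|}|X\cap X_0|$, for instance, only guarantees that one of the two is large). I would handle this probabilistically, leveraging that $(i)$ and $(iii)$ hold together with probability $\Omega(\alpha^{2\Delta})$ --- much larger than the $O(\beta^\Delta)$ probability with which $(iii)$ can fail given $(i)$ --- combined with $\BBE[|X \cap X_0|] \ge \alpha^{2\Delta}|X_0|$ and, likely, a case split according to whether $|X_0|$ is large (where reverse Markov applied to $|X\cap X_0|$ suffices) or small (where one conditions on $X$ meeting $X_0$, making $(ii)$ automatic, and re-verifies $(i)$ and $(iii)$ in the conditional distribution). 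Getting all three conclusions to hold simultaneously for one set $X$ is the crux of the argument.
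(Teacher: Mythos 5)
Your proposal correctly establishes conclusions (i) and (iii) simultaneously (via the linear combination $Z = |X| - \frac{\alpha^{2\Delta}n}{2(\beta n)^\Delta}Y$ and reverse Markov), but you explicitly leave (ii) unresolved, and the heuristic you sketch for it does not close the gap. The obstruction you identify is real: reverse Markov gives $P(\text{(ii)}) \ge \tfrac12\alpha^{2\Delta}$ and you have $P(\text{(i)}\wedge\text{(iii)}) \ge \tfrac14\alpha^{2\Delta}$, but both are tiny, so inclusion--exclusion yields nothing, and the ``$O(\beta^\Delta)$'' figure bounds an unconditional probability, not the conditional probability you would need. The case split you suggest also does not obviously succeed: for $|X_0|$ of medium size, conditioning on $X$ meeting $X_0$ only gives one element of the intersection, not a $\tfrac12\alpha^{2\Delta}$ fraction, and the conditional law of $X$ is no longer a clean intersection of independent neighborhoods.

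The missing idea is to replace the sum $|X| + \text{(const)}\cdot|X\cap X_0|$ --- which, as you correctly observe, only forces one summand to be large --- by the \emph{product} $|X|\cdot|X\cap X_0|$. Because of the a priori upper bounds $|X| \le n$ and $|X\cap X_0| \le |X_0|$, a lower bound on the product automatically forces a lower bound on each factor, which is exactly (i) and (ii) at once. The paper does this by sampling $\Delta$ (not $2\Delta$) random vertices, computing $\BBE[|X_0\cap X|\cdot|X|] \ge \alpha^{2\Delta}|X_0|n$ via a count of length-2 walks and convexity, then raising to the $\Delta$-th power and considering the normalized quantity
\[
\frac{|X_0\cap X|^\Delta |X|^\Delta}{\BBE\bigl[|X_0\cap X|^\Delta |X|^\Delta\bigr]}
\;-\;
\frac{\xi(X)\,|X_0\cap X|^\Delta}{2\,\BBE\bigl[\xi(X)\,|X_0\cap X|^\Delta\bigr]},
\]
which has expectation $\tfrac12$. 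A single outcome on which this is $\ge\tfrac12$ delivers the lower bound on $|X_0\cap X|^\Delta|X|^\Delta$ (hence (i) and (ii)) and the upper bound on $\xi(X)/|X|^\Delta$ (hence (iii)) simultaneously. The factor $|X_0\cap X|^\Delta$ appears in both terms so it cancels when extracting (iii); this is the piece of the argument that your single linear functional in $|X|$ and $Y$ alone cannot reproduce.
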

\begin{proof}
Define $V = V(G)$. 
Choose $\Delta$ vertices ${\bf v_1, \ldots, v_\Delta} \in V$ independently and uniformly at random,
and let ${\bf X} = \bigcap_{i=1}^{\Delta} N({\bf v_i})$. 
By linearity of expectation,
\begin{align}
	\BBE\left[ \big| X_0 \cap {\bf X} \big| \cdot \big| {\bf X} \big| \right] 
	&= \sum_{x \in X_0, \,y \in V} \BFP(x, y \in {\bf X})
	= \sum_{x \in X_0, \,y \in V} \left(\frac{\codeg(x,y)}{n}\right)^{\Delta} \nonumber \\
	&\ge |X_0| n \left(\frac{1}{|X_0| n^2} \sum_{x \in X_0} \sum_{y \in V} \codeg(x,y) \right)^{\Delta}, \label{eq:maxdegree_drc_product}
\end{align}
where the inequality follows from convexity.
For a fixed vertex $x \in X_0$, the sum $\sum_{y\in V} \codeg(x,y)$ counts the number of walks of 
length 2 in $V$ that starts at $x$. Since $G$ has minimum degree at least $\alpha n$,
for all $x \in X_0$, we have $\sum_{y \in V}\codeg(x,y) \ge (\alpha n)^2$.
Hence from \eqref{eq:maxdegree_drc_product},
\[
	\BBE\left[ \big| X_0 \cap {\bf X} \big| \cdot \big| {\bf X} \big| \right] 
	\ge |X_0| n \left(\frac{ |X_0| \cdot \alpha^2 n^2}{|X_0| n^2} \right)^{\Delta}
	\ge \alpha^{2\Delta} |X_0|n,
\]
and by convexity,
\[
	\BBE\left[ \big| X_0 \cap {\bf X} \big|^{\Delta} \cdot \big| {\bf X} \big|^{\Delta} \right] 
	\ge \alpha^{2\Delta^2} |X_0|^{\Delta} n^{\Delta}.
\]

Call a $\Delta$-tuple of vertices {\em bad} if it has less than $\beta n$ common neighbors.
For a set $A$, define $\xi(A)$ as the number of bad $\Delta$-tuples in $A^\Delta$.
The probability of a fixed bad $\Delta$-tuple $T$ being in ${\bf X}^\Delta$
is at most $(\frac{\codeg(T)}{n})^\Delta \le \beta^\Delta$.
Hence by linearity of expectation, 
$\BBE[\xi({\bf X})]	\le	\beta^{\Delta} \cdot n^\Delta$.
Since
\begin{align*}
	\BBE\left[
		\frac{\big| X_0 \cap {\bf X} \big|^{\Delta} \cdot \big| {\bf X} \big|^{\Delta}}{\BBE[\big| X_0 \cap {\bf X} \big|^{\Delta} \cdot \big| {\bf X} \big|^{\Delta}]} 
		- \frac{\xi({\bf X}) \cdot \big|X_0 \cap {\bf X}\big|^{\Delta}}{2\BBE[\xi({\bf X})\cdot  \big|X_0 \cap {\bf X}\big|^{\Delta}]} 
	\right] = \frac{1}{2},
\end{align*}
there exists a set $X$ for which 
\begin{align*}
	\frac{\big| X_0 \cap X \big|^{\Delta} \cdot \big| X \big|^{\Delta}}{\BBE[\big| X_0 \cap {\bf X} \big|^{\Delta} \cdot \big| {\bf X} \big|^{\Delta}]} 
	- \frac{\xi(X) \cdot \big|X_0 \cap X\big|^{\Delta}}{2\BBE[\xi({\bf X}) \cdot \big|X_0 \cap {\bf X}\big|^{\Delta}]}
	\ge \frac{1}{2}.
\end{align*}
In particular,
\[
	\big| X_0 \cap X \big|^{\Delta} \cdot \big| X \big|^{\Delta}
	\ge \frac{1}{2} \BBE[\big| X_0 \cap {\bf X} \big|^{\Delta} \cdot \big| {\bf X} \big|^{\Delta}]
	\ge \frac{1}{2} \alpha^{2\Delta^2} |X_0|^{\Delta} n^{\Delta},
\]
and since $|X_0 \cap X| \le |X_0|$ and $|X| \le n$, this implies that
$|X| \ge \frac{\alpha^{2\Delta}}{2^{1/\Delta}} n$ and
$|X_0 \cap X| \ge \frac{\alpha^{2\Delta}}{2^{1/\Delta}} |X_0|$ thus proving
Properties (i) and (ii). Furthermore, 
\begin{align*}
	\xi(X) 
	&\le \big| X \big|^{\Delta} \frac{2\BBE[\xi({\bf X}) \cdot \big|X_0 \cap {\bf X}\big|^{\Delta}]}{\BBE[\big| X_0 \cap {\bf X} \big|^{\Delta} \cdot \big| {\bf X} \big|^{\Delta}]} 
	\le \big| X \big|^{\Delta} 
	\frac{2\beta^{\Delta} n^\Delta |X_0|^{\Delta}}{\alpha^{2\Delta^2} |X_0|^{\Delta} n^{\Delta}}
	\le |X|^{\Delta} \left(\frac{2\beta}{\alpha^{2\Delta}}\right)^{\Delta},
\end{align*}
and thus Property (iii) holds.
\end{proof}

We now prove Theorem~\ref{thm:densityembedding} using Lemma~\ref{lem:maxdegree_drc}.

\begin{thm*}
Let $\delta$ and $\alpha$ be positive real numbers. 
Let $G$ be an $n$-vertex graph of minimum degree at least $(\delta + \alpha)n$. Then $G$ contains
all bipartite graphs $H$ on at most $\delta n$ vertices with maximum degree at most $\Delta$ and bandwidth at most $\frac{1}{256\Delta}\alpha^{6\Delta+1}n$.
\end{thm*}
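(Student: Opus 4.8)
The plan is to embed $H$ greedily, one vertex at a time, following a bandwidth-ordering of $V(H)$, while keeping every ``recently embedded'' vertex inside a well-connected set supplied by Lemma~\ref{lem:maxdegree_drc}. First I would fix an ordering $v_1,\dots,v_h$ of $V(H)$ (with $h\le\delta n$) witnessing bandwidth at most $b:=\frac1{256\Delta}\alpha^{6\Delta+1}n$, so that every edge of $H$ joins two vertices at distance at most $b$ in the ordering, and cut it into consecutive blocks $I_1,\dots,I_q$ of size at most $K$ for a suitable $K=\Theta(b)$; since $K\ge b$, a vertex of $I_j$ has neighbours only in $I_{j-1}\cup I_j\cup I_{j+1}$. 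Using the bipartition $(P_1,P_2)$ of $H$, write $I_j=A_j\sqcup C_j$ with $A_j\subseteq P_1$ and $C_j\subseteq P_2$. The blocks will be processed from left to right; at every stage the number of vertices already used is at most $h\le\delta n$, so the graph $G'$ induced on the still-unused vertices has at least $(1-\delta)n$ vertices and minimum degree at least $(\delta+\alpha)n-\delta n=\alpha n$, which is exactly what is needed to apply Lemma~\ref{lem:maxdegree_drc} to $G'$.

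The invariant I would maintain is that $\phi(A_j)$ sits inside a set $\mathcal A_j$ and $\phi(C_j)$ inside a set $\mathcal C_j$, each produced by Lemma~\ref{lem:maxdegree_drc} at the time block $I_j$ was embedded, such that all but a $(2\beta/\alpha^{2\Delta})^{\Delta}$-fraction of the $\Delta$-tuples inside $\mathcal A_j$ (and inside $\mathcal C_j$) have at least $\beta n$ common neighbours in the graph that was unused at that time, where $\beta=\Theta(K/n)$ is chosen so small that $2\beta/\alpha^{2\Delta}\ll 1$. Since passing from one block to the next removes only $|I_j|\le K\le\tfrac12\beta n$ further vertices, the relevant common neighbourhoods shrink by at most $K$, so they still have size at least $\tfrac12\beta n\ge K$ when needed. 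To embed $I_{j+1}$ I would: (i) apply Lemma~\ref{lem:maxdegree_drc} to $G'$ to obtain good sets $\mathcal A_{j+1},\mathcal C_{j+1}$; (ii) embed $A_{j+1}$ into $\mathcal A_{j+1}$ greedily, each $a\in A_{j+1}$ having at most $\Delta$ already-embedded neighbours, all lying in $\phi(C_j)\subseteq\mathcal C_j$, whose common neighbourhood is large and meets the large set $\mathcal A_{j+1}$ in enough vertices; (iii) embed $C_{j+1}$ into $\mathcal C_{j+1}$ greedily, each $c\in C_{j+1}$ having at most $\Delta$ already-embedded neighbours split between $\phi(A_j)\subseteq\mathcal A_j$ and $\phi(A_{j+1})\subseteq\mathcal A_{j+1}$. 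Throughout, one only ever uses $\Delta$-tuples that are not ``bad'' in the sense of Lemma~\ref{lem:maxdegree_drc}(iii); since bad tuples are so rare, one can, before embedding into a good set, pass to a sub-good-set of size still at least $K$ that contains no bad $\Delta$-tuple, or equivalently choose the greedy images at random and finish with a union bound, exactly as in Section~\ref{sec:blowup}.

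The step I expect to be the real obstacle is (iii): a vertex $c\in C_{j+1}$ has neighbours in \emph{two} consecutive good sets $\mathcal A_j$ and $\mathcal A_{j+1}$ at once, so what is actually needed is that every small subset of $\mathcal A_j\cup\mathcal A_{j+1}$ --- in particular every mixed one --- has a large common neighbourhood inside the unused graph. Arranging this compatibility is the crux: when producing $\mathcal A_{j+1}$ I would apply Lemma~\ref{lem:maxdegree_drc} not to $G'$ alone but in a nested fashion, forcing $\mathcal A_{j+1}$ to lie well inside the common-neighbourhood structure already determined by the (at most $K^{\Delta}$) small subsets of the \emph{known} set $\phi(A_j)$ --- for instance by iterating the lemma with its target-set parameter $X_0$ set to those common neighbourhoods, or by one application to an auxiliary graph encoding them. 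Each such extra layer of dependent random choice costs a further factor of roughly $\alpha^{2\Delta}$ in the size of the good set and in the usable common-degree threshold, and combining this with the basic $\alpha^{2\Delta}$ loss of Lemma~\ref{lem:maxdegree_drc}, the requirement $K\ge b$, the $\Theta(K/n)$ choice of $\beta$, and the $\Theta(1/\Delta)$ and absolute-constant losses from the Lovász-type splitting into at most $\Delta$ pieces and the various union bounds, is precisely what yields the stated bandwidth bound $\frac1{256\Delta}\alpha^{6\Delta+1}n$. Once the process runs to completion on all $q$ blocks it produces an injective homomorphism from $H$ into $G$, proving Theorem~\ref{thm:densityembedding}.
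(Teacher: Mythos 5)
You correctly identify the key lemma and the main obstacle: after independent applications of dependent random choice, a vertex of $H$ may have already-embedded neighbours spread across two good sets, and Lemma~\ref{lem:maxdegree_drc} gives no control over the common neighbourhoods of such ``mixed'' tuples. But your proposed fix --- applying the lemma ``in a nested fashion'' so that $\mathcal A_{j+1}$ sits inside ``the common-neighbourhood structure already determined by the small subsets of $\phi(A_j)$'' --- does not go through: there may be $\Theta(K)$ relevant subsets of $\phi(A_j)$, each with its own common neighbourhood, while the parameter $X_0$ of Lemma~\ref{lem:maxdegree_drc} can force a large intersection with only a single target set. Moreover, the same difficulty already silently undermines your step (ii): to embed $A_{j+1}$ into $\mathcal A_{j+1}$ you assert that the common neighbourhood of each $\phi(N(a)\cap C_j)\subseteq\mathcal C_j$ ``meets the large set $\mathcal A_{j+1}$ in enough vertices,'' but a set of size $\Theta(\beta n)$ and a set of size $\Theta(\alpha^{2\Delta}n)$ inside an $n$-vertex ambient graph may be disjoint; nothing in your construction makes $\mathcal A_{j+1}$ land inside those neighbourhoods. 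Guarding good sets for \emph{both} colour classes is what manufactures these incompatibilities, and neither of them is closed.

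The paper's proof sidesteps the whole issue by embedding only one side of the bipartition into good sets. With $A\cup B$ the bipartition, it advances in nested blocks $B_0\subseteq B_1\subseteq\cdots$ and at time $t$ applies Lemma~\ref{lem:maxdegree_drc} with $(X_0)_{\ref{lem:maxdegree_drc}}$ set to (the unused part of) the previous good set $X_t$, so that the new good set $X_{t+1}$ satisfies $|X_t\cap X_{t+1}|=\Omega(\alpha^{4\Delta+1}n)$; the new $B$-vertices are then embedded into the \emph{intersection} $X_t\cap X_{t+1}$. Because of the bandwidth bound, every $a\in A$ has $N(a)$ contained in two consecutive $B$-blocks, so $\phi(N(a))$ lies entirely inside a single $X_t$ --- it is never a mixed tuple --- and the bad-tuple count from part (iii) of the lemma applies directly. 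The $A$-vertices are not placed into any good set at all: they are embedded greedily into arbitrary remaining vertices, using only that good $\Delta$-tuples have $\Omega(\beta n)$ common neighbours, which exceeds the $O(\beta n)$ vertices consumed per block. Thus the role of $X_0$ in Lemma~\ref{lem:maxdegree_drc} is not to chase common neighbourhoods of many tuples but solely to keep consecutive good sets overlapping; this single invariant replaces all the compatibility bookkeeping your plan requires and is the ingredient your outline is missing.
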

\begin{proof}
Let $G$ and $H$ be graphs given as above. Define $m = |V(H)|$.
Since $|V(H)| \le |V(G)|$, we can always embed the isolated vertices in the end.
Thus we may assume for simplicity that $H$ has no isolated vertex.
Let $V = V(G)$ and let $A \cup B$ the bipartition of $H$. 
Define $\beta = \frac{1}{256\Delta}\alpha^{6\Delta+1}$ and 
label the vertices of $H$ using $[m]$ 
so that $|i - j| \le \beta n$ whenever the vertices with labels $i$ and $j$ are adjacent.

For $t \ge 0$, define $B_t := [2t\beta n] \cap B$ and define $A_t$ as the set of vertices $a \in A$
for which $N_H(a) \subseteq B_t$. Note that since $H$ has bandwidth at most $\beta n$,
we have
$(A_{t+1} \cup B_{t+1}) \setminus (A_t \cup B_t)  \subseteq ((2t-3)\beta n, (2t+1)\beta n]$. Therefore
\begin{align} \label{eq:bandwidth_space}
	|(A_{t+1} \cup B_{t+1}) \setminus (A_t \cup B_t)| < 4\beta n
\end{align}
for all $t \ge 0$. Note that $A_0 = B_0 = \emptyset$ since $H$ has no isolated vertex.

We embed $H$ into $G$ using an iterative algorithm. 
Define $\gamma = 16\beta \alpha^{-2\Delta}$.
As an initialization, apply Lemma \ref{lem:maxdegree_drc} to $G$ with $(X_0)_{\ref{lem:maxdegree_drc}} = V$,
$\beta_{\ref{lem:maxdegree_drc}} = 8\beta$, and $\alpha_{\ref{lem:maxdegree_drc}} = \alpha$ to obtain 
a set $X_0$ (which is the set $X$ that we obtain by applying the lemma) of size 
$|X_0| \ge \frac{1}{2}\alpha^{2\Delta}n$ where the number of $\Delta$-tuples in $X^{\Delta}$
with less than $8\beta n$ common neighbors is at most $(\gamma |X_0|)^{\Delta}$.
Define $\phi$ as the trivial 
partial embedding of $H$ to $G$ defined on $A_0 \cup B_0 = \emptyset$.

For $t \ge 0$, at the $t$-th step of the algorithm, we are given as input 
a set $X_{t}$ and a partial embedding $\phi$ of $H$ to $G$ defined on $A_{t} \cup B_{t}$. 
Define $V_{t} = V \setminus \phi(A_{t} \cup B_{t})$.
We say that a $\Delta$-tuple of vertices $T$ is {\em $V_{t}$-bad} if the number of common neighbors
of $T$ in $V_t$ is less than $8\beta n$. Otherwise, we say that $T$ is {\em $V_{t}$-good}.
The given input satisfies the following properties:
\begin{itemize}
  \setlength{\itemsep}{1pt} \setlength{\parskip}{0pt}
  \setlength{\parsep}{0pt}
  \item[(a)] $X_{t} \subseteq V_{t-1}$,
  \item[(b)] $|X_{t}| \ge \frac{1}{2}\alpha^{2\Delta+1}n$,
  \item[(c)] $\phi(B_{t} \setminus B_{t-1}) \subseteq X_{t}$, and
  \item[(d)] for all $a \in A_{t+1} \setminus A_{t}$, the set $\phi(N(a) \cap B_{t})$ is contained in at most
  $(\gamma|X_{t}|)^{\Delta - |N(a) \cap B_t|}$  $V_{t-1}$-bad $\Delta$-tuples in $X_{t}$.
\end{itemize}
Note that the above properties hold for $t=0$ since
$N(a) \cap B_0 = \emptyset$ for all vertices $a$ (where we define $B_{-1} = V_{-1} = \emptyset$).
For some $t \ge 0$, suppose that we are given a set $X_{t}$ and a map $\phi$ defined on $A_{t} \cup B_{t}$ that satisfies the above properties.
Define $G_t$ as the subgraph of $G$ induced on $V_t = V \setminus \phi(A_t \cup B_t)$.
Since $|A_t \cup B_t| \le |V(H)| \le \delta n$, the given minimum degree condition on $G$ implies that
$G_t$ has minimum degree at least $\alpha n \ge \alpha |V(G_t)|$. In particular, this 
implies that $G_t$ has at least $\alpha n$ vertices.

Apply Lemma \ref{lem:maxdegree_drc} to $G_t$ with $(X_0)_{\ref{lem:maxdegree_drc}} = X_t \setminus \phi(A_t \cup B_t)$,
$\beta_{\ref{lem:maxdegree_drc}} = 8\beta$, and $\alpha_{\ref{lem:maxdegree_drc}} = \alpha$ to obtain a set $X_{t+1}$ satisfying the following properties:
\begin{itemize}
  \setlength{\itemsep}{1pt} \setlength{\parskip}{0pt}
  \setlength{\parsep}{0pt}
  \item[(i)] $|X_{t+1}| \ge \frac{1}{2} \alpha^{2\Delta} |V(G_t)| \ge \frac{1}{2} \alpha^{2\Delta+1}n$,
  \item[(ii)] $|X_t \cap X_{t+1}| 
	\ge \frac{1}{2} \alpha^{2\Delta} | (X_t \setminus \phi(A_t \cup B_t)) |
	\ge \frac{1}{2} \alpha^{2\Delta} (| X_t | - 4\beta n)
	\ge \frac{1}{8} \alpha^{4\Delta+1}n$, and
  \item[(iii)] the number of $V_{t}$-bad $\Delta$-tuples in $X_{t+1}^{\Delta}$ is at most $(\gamma|X_{t+1}|)^{\Delta}$,
\end{itemize}
Note that Properties (a) and (b) immediately follow.

To extend $\phi$ to $A_{t+1} \cup B_{t+1}$, we first extend $\phi$ to $B_{t+1} \setminus B_t$.
We embed vertices in $B_{t+1} \setminus B_t$ one at a time according to the order given by the labelling. Let $b \in B_{t+1} \setminus B_t$ be the current vertex where we identify $b$ with the integer in $[m]$. 
Define $B[b] = B \cap [b]$ and for each vertex $a \in A$, define $d_b(a) = |N(a) \cap B[b]|$.
We maintain the following three properties while extending $\phi$:
\begin{itemize}
  \setlength{\itemsep}{1pt} \setlength{\parskip}{0pt}
  \setlength{\parsep}{0pt}
  \item[(c')] $\phi(B[b] \setminus B_{t}) \subseteq X_{t} \cap X_{t+1}$,
  \item[(d1)] for all $a \in A_{t+1} \setminus A_{t}$, the set $\phi(N(a) \cap B[b])$ is contained in at most
    $(\gamma|X_t|)^{\Delta - d_b(a)}$ $V_{t-1}$-bad $\Delta$-tuples of $X_t$, and
  \item[(d2)] for all $a \in A_{t+2} \setminus A_{t+1}$, the set $\phi(N(a) \cap B[b])$ is contained in at most
  $(\gamma|X_{t+1}|)^{\Delta - d_b(a)}$ $V_t$-bad $\Delta$-tuples of $X_{t+1}$.
\end{itemize}
Initially, we may assume that $b = 2t\beta n$ so that $B[b] = B_t$.
Then Property (c') holds vacuously, and Property (d1) holds by Property (d) of the previous iteration. 
Moreover, note that if $a \in A_{t+2} \setminus A_{t+1}$, then $a$ is adjacent to a vertex in $B_{t+2}$, 
thus to a vertex with label at least $2(t+1)\beta n + 1$. Hence by the definition of bandwidth, it cannot be adjacent to 
a vertex in $B_{t}$, implying that $N(a) \cap B_t = \emptyset$. This implies (d2) at the initial stage,
by Property (iii).

Let $b \in B_{t+1} \setminus B_t$ be the next vertex to embed. 
Let $a_1, a_2, \ldots, a_d$ be the neighbors of $b$ (for $d \le \Delta$). 
Note that by the definition of $A_t$, we have $a_i \notin A_t$ for all $i \in [d]$. 
On the other hand for each $i \in [d]$, since $b \in B_{t+1} \subseteq [(2t+2)\beta n]$
and $H$ had bandwidth at most $\beta n$, the vertex $a_i$ cannot be adjacent to a vertex in $((2t+4)\beta n, m]$. This implies that $a_i \in A_{t+2} \setminus A_t$. 
For each $i \in [d]$, define $N_i = N(a_i) \cap [b-1]$ and note that $\phi$ is already defined on $N_i$.
For each $i \in [d]$, since $a_i$ is adjacent to $b$ and $H$ has bandwidth at most $\beta n$, 
the vertex $a_i$ cannot be adjacent to a vertex in $[b-2\beta n-1] \cap B \subseteq B_{t-1}$, thus implying that 
$N_i \subseteq B_{t+1} \setminus B_{t-1}$. 

Fix an index $i \in [d]$. If $a_i \in A_{t+1} \setminus A_t$, then
Property (c') implies that $\phi(N_i) \subseteq X_{t}$, and Property (d1) implies that
$\phi(N_i)$ is contained in at most $(\gamma|X_t|)^{\Delta - |N_i|}$ $V_{t-1}$-bad $\Delta$-tuples of $X_t$. Hence there are less than $\gamma|X_t|$ vertices $x \in X_t$
for which the $(|N_i|+1)$-tuple $N_i \cup \{x\}$ is contained in more than
$(\gamma|X_t|)^{\Delta - |N_i|-1}$ $V_{t-1}$-bad
$\Delta$-tuples of $X_t$.
If $a_i \in A_{t+2} \setminus A_{t+1}$, then Property (c') implies that
$\phi(N_i) \subseteq X_{t+1}$. Hence similarly as above Property (d2)
implies that there are less than $\gamma|X_{t+1}|$ vertices $x \in X_{t+1}$
for which the $(|N_i|+1)$-tuple $N_i \cup \{x\}$ is contained in more than
$(\gamma|X_{t+1}|)^{\Delta - |N_i|-1}$ $V_t$-bad $\Delta$-tuples of $X_{t+1}$. Since
\[
	|X_t \cap X_{t+1}| 
	\ge \frac{1}{8} \alpha^{4\Delta+1}n
	\ge 2\beta n + \frac{1}{16} \alpha^{4\Delta+1} n
	\ge 2\beta n + d \gamma n,
\]
we have $|(X_t \cap X_{t+1}) \setminus \phi(B[b-1])| \ge |X_t \cap X_{t+1}| - (2\beta n-1) \ge d \gamma n + 1$
(by Property (c')).
Therefore we can choose $\phi(b)=x$ to maintain Properties (d1) and (d2)
by avoiding the vertices identified above for each $i=1,2,\cdots,d$.

Once we finish embedding $B_{t+1}$, we greedily embed the vertices $a \in A_{t+1}$ one at a time.
Note that $\phi(N(a) \cap B)$ is contained in less than $(\gamma |X_{t+1}|)^{\Delta - |N(a) \cap B|} < |X_{t+1}|^{\Delta - |N(a) \cap B|}$ $V_{t-1}$-bad $\Delta$-tuples. Since the number of $\Delta$-tuples
containing $\phi(N(a) \cap B)$ is $|X_{t+1}|^{\Delta - |N(a) \cap B|}$, this in particular implies
that there exists a $V_{t-1}$-good $\Delta$-tuple containing $\phi(N(a) \cap B)$. 
Since every $V_{t-1}$-good tuple has at least $8\beta n$ common neighbors in $V_{t-1}$, we thus see 
that $\phi(N(a) \cap B)$ has at least $8\beta n$ common neighbors in $V_{t-1}$. 
By \eqref{eq:bandwidth_space}, we see that $|V_t \setminus V_{t-1}| \le 4\beta n$ and thus
$\phi(N(a) \cap B)$ has at least $4\beta n$ common neighbors in $V_t$. 
Therefore again by \eqref{eq:bandwidth_space}, we will never run out of vertices while greedily embedding the vertices in $A_{t+1}$ to appropriate vertices in $V_t$.
Note that Property (c) for the next step is satisfied by Property (c'), and 
Property (d) for the next step is satisfied by Property (d2). 
\end{proof}

Theorem~\ref{thm:densityembedding} has the following interesting corollary
which shows that a transference-type result holds even if
$\beta$ is as large as $c^{\Delta}$ for some constant $c$ when the given graph
is bipartite and has small bandwidth.

\begin{cor} \label{cor:bandwidth}
For every positive real number $\varepsilon$, there exists a real number $c < 1$ such that the following holds. 
If $G$ is a $n$-vertex bipartite graph of maximum degree at most $\Delta$ and bandwidth at most $c^{\Delta}n$, then $r(G) \le (4+\varepsilon)n$.
\end{cor}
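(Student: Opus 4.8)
The plan is to reduce the statement to a single application of Theorem~\ref{thm:densityembedding}: given a $2$-edge-colouring of $K_N$ with $N=(4+\varepsilon)n$, I will extract from the denser colour a subgraph whose minimum degree is large enough to guarantee a monochromatic copy of $G$. We may assume $\varepsilon\le 1$. Fix a sufficiently small $\alpha=\alpha(\varepsilon)>0$ with $\alpha\le 1/256$ and small enough that $n+\alpha N\le\tfrac{N-1}{4}$ whenever $n$ is large (this needs only $\alpha(4+\varepsilon)\le\varepsilon/8$), and set $c=\alpha^{8}<1$. One checks that $c^{\Delta}=\alpha^{8\Delta}\le\frac{1}{256\Delta}\alpha^{6\Delta+1}$ for every integer $\Delta\ge 1$ (the ratio $\frac{\alpha^{1-2\Delta}}{256\Delta}$ is increasing in $\Delta$ and is $\ge 1$ at $\Delta=1$ since $\alpha\le 1/256$). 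If $c^{\Delta}n<1$ then $G$ has bandwidth $0$, hence no edges, and $r(G)=n\le(4+\varepsilon)n$ trivially; so we may assume $c^{\Delta}n\ge 1$, which forces $n\ge c^{-\Delta}\ge c^{-1}=\alpha^{-8}$, a quantity far larger than $2/\varepsilon$, so the large-$n$ hypothesis above is automatic.

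Now take any red/blue colouring of $K_N$; one colour class, say the red graph $\Gamma_r$, has at least $\tfrac12\binom N2$ edges. Put $\lambda=n+\alpha N$ and repeatedly delete from $\Gamma_r$ a vertex whose degree in the current induced subgraph is less than $\lambda$, stopping when none remains. If the process deleted all $N$ vertices, then charging each red edge to whichever of its endpoints was deleted first shows $e(\Gamma_r)<\lambda N$; but $\lambda\le\tfrac{N-1}{4}$ gives $\lambda N\le\tfrac{N(N-1)}{4}=\tfrac12\binom N2\le e(\Gamma_r)$, a contradiction. Hence the process halts at a nonempty set $W$, and with $m:=|W|$ we have $m>\lambda$ and $\delta(\Gamma_r[W])\ge\lambda=n+\alpha N\ge n+\alpha m$.

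Finally, apply Theorem~\ref{thm:densityembedding} to $\Gamma_r[W]$, an $m$-vertex graph, with the parameters $\delta_0:=n/m$ (note $\delta_0<1$ since $m>\lambda>n$) and $\alpha$: the minimum-degree hypothesis holds because $(\delta_0+\alpha)m=n+\alpha m\le\delta(\Gamma_r[W])$. Therefore $\Gamma_r[W]$ contains every bipartite graph on at most $\delta_0 m=n$ vertices with maximum degree at most $\Delta$ and bandwidth at most $\frac{1}{256\Delta}\alpha^{6\Delta+1}m$. Our graph $G$ is bipartite, has $n$ vertices, maximum degree at most $\Delta$, and bandwidth at most $c^{\Delta}n\le\frac{1}{256\Delta}\alpha^{6\Delta+1}n\le\frac{1}{256\Delta}\alpha^{6\Delta+1}m$, so $\Gamma_r[W]$ contains a red copy of $G$, as required. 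The substance of the argument is entirely in Theorem~\ref{thm:densityembedding}, which we are assuming; the only thing that needs care is the choice of constants, namely taking $\alpha$ small relative to $\varepsilon$ (so that $\lambda\le\tfrac{N-1}{4}$) and then $c$ small relative to $\alpha$ but \emph{uniformly in $\Delta$}, which is exactly what lets the final bandwidth inequality hold for every $\Delta$.
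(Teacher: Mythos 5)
Your proof is correct and follows essentially the same route as the paper: extract a subgraph of the majority color with minimum degree around $(1+\tfrac{\varepsilon}{4})n$ (via iterated deletion of low-degree vertices) and apply Theorem~\ref{thm:densityembedding}. You are in fact more careful than the paper's terse proof: you make $c$ genuinely independent of $\Delta$ (observing that $256\Delta\,\alpha^{2\Delta-1}\le 1$ for all $\Delta\ge 1$ once $\alpha\le 1/256$), you handle the degenerate case $c^{\Delta}n<1$, and you correctly pass the actual order $m$ of the extracted subgraph into the theorem rather than $N$ — the paper's displayed constant $c=\frac{1}{256\Delta}\bigl(\frac{4(4+\varepsilon)}{\varepsilon}\bigr)^{6\Delta+1}$ is a typo (it is $>1$ and depends on $\Delta$), and your choice repairs it.
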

\begin{proof}
Define $c = \frac{1}{256\Delta}\left(\frac{4(4+\varepsilon)}{\varepsilon}\right)^{6\Delta+1}$.
Let $N = (4+\varepsilon)n$ and suppose that the edge set of $K_N$ has been two-colored using red and blue.
Without loss of generality, we may assume that the red graph has density at least $\frac{1}{2}$.  
Then we can find a subgraph of the red graph having minimum degree at least 
$(1 + \frac{\varepsilon}{4})n$. Apply Theorem~\ref{thm:densityembedding}
to this graph with $n_{\ref{thm:densityembedding}} = N$, 
$\delta_{\ref{thm:densityembedding}} = \frac{1}{4+\varepsilon}$, and
$\alpha_{\ref{thm:densityembedding}} = \frac{\varepsilon}{4(4+\varepsilon)}$
to find a monochromatic copy of $G$.  
\end{proof}

\section{Concluding Remarks}
\label{sec:remarks}

The main theorem of this paper (Theorem~\ref{thm:transference}) is a transference
principle for Ramsey numbers of bounded degree graphs.
It asserts that for all $\Delta, \xi$ and $\varepsilon$, there exists $\beta$ and $n_0$ such that the following holds
for all $n \ge n_0$: if $G$ is a $n$-vertex graph of maximum degree at most $\Delta$
having a homomorphism $f$ to $H$ such that $|f^{-1}(v)| \le \beta n$ for all $v \in V(H)$, 
then $r(G) \le (1+\xi)\hat{r}_{\varepsilon}(H,w) \cdot \beta n$.
Similar result can be proved for more than two colors and for off-diagonal 
Ramsey numbers using the same approach.
The bound on $\beta$ that we obtain is of tower-type which is unlikely to be best possible. For example, Corollary~\ref{cor:bandwidth} shows that we may take
$\beta \le c^{\Delta}$ for some special case.

It might be the case that the transference principle holds for classes of graphs
more general than bounded degree graphs.

\begin{ques} \label{ques:degen}
Can Theorem~\ref{thm:transference} be extended to degenerate graphs?
\end{ques}

The main difficulty in following the same strategy used in this paper
lies in developing a variant of the blow-up lemma that we used.
In fact there has been some recent work on extending the blow-up lemma to classes 
of graphs beyond bounded degree graphs.
For an integer $a$, a graph is called $a$-arrangeable if its vertices can be ordered 
as $x_1, \cdots, x_n$ such that $|N(N(x_i) \cap R_i) \cap L_i\}| \le a$ for all $i \in [n]$.
where $R_i = \{x_{i+1}, \cdots, x_n\}$ and $L_i = \{x_1, \cdots, x_i\}$. 
B\"ottcher, Taraz, and W\"urfl \cite{BoTaWu}
extended the blow-up lemma to arrangeable graphs (after adding a 
weak constraint on the maximum degree). Their result implies that a 
transference-type result holds if the target graph $H$ is a bounded degree graph.
There also has been some partial success towards extending the blow-up lemma
to degenerate graphs \cite{Lee} but only when the bandwidth is small and for
almost spanning subgraphs.
It is plausible that some of the ideas used in these papers will help
answering Question~\ref{ques:degen}.

\medskip

Recall that for a given weighted graph $(G,w)$, $\hat{r}_\varepsilon(G)$ is not
necessarily finite if $\varepsilon$ is large. 
In fact $\hat{r}_\varepsilon(G)$ is finite if and only if $\varepsilon < \frac{1}{r(\chi(G))-1}$
(where $r(k)$ is the Ramsey number of $K_k$).
Let $s = r(\chi(G)) - 1$. 
If $\varepsilon \ge \frac{1}{s}$, then one can consider a red/blue coloring of $K_s$ with 
no monochromatic copy of $K_{\chi(G)}$ and take a balanced blow-up of this coloring
to find an arbitrarily large $n$-vertex graph with minimum degree at least $(1 - \frac{1}{s})n$
having no monochromatic subgraph of chromatic number at least $\chi(G)$.
In particular, it does not contain a monochromatic copy of $G$.
On the other hand if $\varepsilon < \frac{1}{s}$, then one can show that by supersaturation,
for sufficiently large $n$ there exists $\Omega(n^{\chi(G)})$ monochromatic copies of 
$K_{\chi(G)}$ in every red/blue coloring of an $n$-vertex graph $\Gamma$ of minimum degree at least $(1-\varepsilon)n$.
Without loss of generality, assume that at least half of such copies of $K_{\chi(G)}$ are red.
Consider a $\chi(G)$-uniform hypergraph over the vertex set of $\Gamma$ where we place a hyperedge
over all red copies of $K_{\chi(G)}$ in the coloring above. 
By K\"ov\'ari-S\'os-Tur\'an theorem for hypergraphs, we can find a complete $\chi(G)$-partite
graph with $|V(G)|$ vertices in each part if $n$ is sufficiently large. 
This implies that we can find a monochromatic copy of $G$ in $\Gamma$.

\medskip

\noindent {\bf Acknowledgements}. I thank David Conlon, Jacob Fox, and Benny Sudakov 
for fruitful discussions.

\end{document}